\documentclass[11pt]{amsart}
\usepackage{amsmath,amsthm, amscd, amssymb, amsfonts}

\usepackage[]{color} 

\usepackage{graphicx} 

\usepackage{mathrsfs} 
\usepackage{figsize}
\usepackage{algorithm}
\usepackage{algpseudocode}
\usepackage{enumerate} 

\usepackage{float} 

\usepackage{tikz} 

\usepackage{tkz-berge} 
\usepackage{tkz-graph}
\usetikzlibrary{arrows}
\usetikzlibrary{calc}
\usetikzlibrary{matrix}
\usetikzlibrary{decorations.pathreplacing}  

\usepgflibrary{shapes.geometric} 
\usetikzlibrary[topaths]
\usetikzlibrary{positioning}
\usetikzlibrary{arrows}  
\usetikzlibrary{graphs,graphs.standard}

\usetikzlibrary{arrows.meta,calc}

\usepackage{tabularx}

\theoremstyle{plain}
\newtheorem{theorem}{Theorem}[section]
\newtheorem{corollary}[theorem]{Corollary}
\newtheorem{proposition}[theorem]{Proposition}
\newtheorem{lemma}[theorem]{Lemma}
\newtheorem{definition}{Definition}[section]

\newtheorem{example}[theorem]{Example}

\usepackage[colorlinks]{hyperref}

\def\F{\mathbb{F}}

 \def\ad{\mathrm{ad}}

\title[Comaximal Graphs of finite-dimensional Lie algebras]{Comaximal Graphs of finite-dimensional Lie algebras over finite fields: Triangle counts and structural invariants}

\thanks{...}

\author{David A. Towers}
\address{Lancaster University, School of Mathematical Sciences, Lancaster, LA1 4YF, UK} 
  \email{d.towers@lancaster.ac.uk}

\author{Yesneri Zuleta}
 \address{Universidad del Norte,  Departamento de Matemáticas, Física y Ciencia de Datos, Km 5 vía a Puerto Colombia, Barranquilla, Colombia.}
  \email{zuletay@uninorte.edu.co}

\author{Ismael Gutierrez}
 \address{Universidad del Norte,  Departamento de Matemáticas, Física y Ciencia de Datos, Km 5 vía a Puerto Colombia, Barranquilla, Colombia.}
  \email{isgutier@uninorte.edu.co }

\subjclass[2010]{17B30, 17B45, 05C40, 05C25}
\keywords{Graphs associated with Lie Algebras; nilpotent Lie Algebra; solvable Lie Algebra; supersolvable Lie algebra}

\begin{document}

\begin{abstract}
Let $L$ be a finite-dimensional Lie algebra over a field $F$. The comaximal graph $\Gamma(L)$ has as vertices the proper nonzero subalgebras of $L$, two of them adjacent whenever they generate $L$; its structure was previously classified for Lie algebras of dimension at most 3 over finite fields. Here we extend that work in two directions. First, we obtain explicit formulas for the number of triangles $t(\Gamma(L))$ for every three-dimensional Lie algebra over $\F_q$. Second, we extend the classification to several four-dimensional families over $\mathbb{F}_q$, the abelian, Heisenberg, and filiform algebras, and $\mathfrak{gl}_2(\F_q)$. We also relate graph-theoretic properties of $\Gamma(L)$, such as completeness and the role of the Frattini subalgebra, to structural properties of $L$, including supersolvability. These results yield new combinatorial invariants for finite-dimensional Lie algebras over finite fields.
\end{abstract}

\maketitle


\section{Introduction}

Graphs associated with algebraic structures provide a fruitful way of translating algebraic information into combinatorial language. Classic examples include commuting graphs, non-commuting graphs, power graphs, intersection graphs, and various graphs attached to groups, rings, and vector spaces. Such constructions often reveal structural properties that are not immediately visible from an algebraic viewpoint and allow the use of graph-theoretic methods in algebra.

More recently, a systematic study of graphs associated with finite-dimensional
Lie algebras were introduced in \cite{TGF, TGF2, TZG}. In \cite{TGF} the nilpotent graph of a Lie algebra was introduced, whose vertices are the non-nilpotent elements of $L$, with two vertices adjacent if and only if they generate a nilpotent subalgebra. In \cite{TGF2} an analogous construction is carried out for solvability, defining the solvable graph of $L$.  These investigations established several structural and combinatorial properties of the corresponding graphs, including connectedness, diameter, clique number, and classifications over finite
fields. The \emph{comaximal graph} of a finite-dimensional Lie algebra was introduced in \cite{TZG}, which is the focus of the present paper.

The \emph{comaximal graph} of $L$, denoted by $\Gamma(L)$, is the simple graph whose vertices are the non-trivial proper Lie subalgebras of $L$, where two
distinct vertices $A$ and $B$ are adjacent whenever $\langle A, B\rangle = L$.
Thus, adjacency reflects the ability of two proper subalgebras to generate
the whole algebra. This construction is the natural analog of the comaximal graph of a ring for Lie algebras.

Unlike the nilpotent and solvable graphs, whose vertices are individual
elements of $L$, the vertices of $\Gamma(L)$ are themselves subalgebras, so
that $\Gamma(L)$ records how the subalgebra lattice of $L$ can be combined
to recover the whole algebra. In \cite{TZG}, the basic structural theory
of $\Gamma(L)$ was laid out. Explicit descriptions of the vertices, adjacency relations, connected components, and several graph-theoretic invariants were obtained. These results naturally lead to a more detailed investigation of the
local combinatorial structure of the graph, together with a full classification of $\Gamma(L)$ for Lie algebras of dimension at most three over a finite field $\F_q$.

The present paper continues this line of investigation, pushing the classification of \cite{TZG} further in two complementary directions. First, building on the three-dimensional classification already in hand, we study a finer combinatorial invariant of $\Gamma(L)$: the number of triangles $\mathfrak{t}(\Gamma(L))$, which we compute explicitly for every three-dimensional Lie algebra over $\F_q$, using the classification obtained in \cite{TZG}, and organized according to the dimension of the derived algebra $L'$. Triangle counts measure local clustering and provide information about the distribution of complete subgraphs. In graphs arising from algebraic structures, triangles frequently encode significant algebraic information and often distinguish between objects with similar global parameters. Second, we extend the classification itself beyond dimension three, describing the comaximal graph for several families of four-dimensional Lie algebras over $\F_q$: the abelian, Heisenberg-plus, and filiform nilpotent algebras, as well as
$\mathfrak{gl}_2(\F_q)$, and relating the structure of $\Gamma(L)$ to algebraic properties of $L$ such as the Frattini subalgebra and supersolvability. Together, these results illustrate how the comaximal graph encodes the internal subalgebra structure of $L$ with increasing fidelity as one moves to finer invariants and to
more intricate algebras.

A second objective is to investigate how structural properties of $\Gamma(L)$ reflect algebraic properties of the Lie algebra $L$. In particular, we study the influence of the Frattini subalgebra, supersolvability, and simplicity on the graph's structure and establish several characterizations in terms of graph-theoretic properties. We also begin the study of the comaximal graph for certain classes of four-dimensional Lie algebras over finite fields, thereby extending the
low-dimensional theory developed previously.

Throughout the paper, finite fields play a central role. Over a finite field $\F_q$, the number of subalgebras is finite and often explicitly computable, allowing precise enumeration of vertices, edges, and triangles. Moreover, the lattice of subalgebras often admits a projective-geometric interpretation, making combinatorial counting arguments particularly effective.

The paper is organized as follows. Section 2 collects the graph-theoretic and Lie-theoretic preliminaries needed later. Section 3 determines the number of triangles in the comaximal graph of every Lie algebra of dimension at most three over $\F_q$.
Section 4 extends the study of comaximal graphs to several classes of four-dimensional Lie algebras over finite fields. Finally, in Section 5, we investigate connections between the graph $\Gamma(L)$ and structural properties of the Lie algebra $L$, such as the Frattini subalgebra and supersolvability.

\section{Preliminaries}
\subsection*{Basics on Graphs.}
A graph $\Gamma = (V, E)$ consists of a set of vertices $V$ and a set of edges $E\subseteq  \{\{u, v\}\mid u,v \in V, u\neq v\}$. All graphs in this paper are simple and undirected. We recall the following standard terminology; for a comprehensive reference, see \cite{Diestel, West, Bondy}. The \emph{order} and \emph{size} of $\Gamma$ are $|V|$ and $|E|$, respectively. The degree $\deg(v)$ of a vertex $v$ is the number of edges incident to $v$. A graph is regular if all vertices have the same degree. The degree sequence of $\Gamma$ is the list of degrees of all vertices in non-increasing order. A graph is \emph{complete} if every pair of distinct vertices is adjacent; the complete graph on $n$ vertices is denoted by $K_n$.

A path of length $k$ in $\Gamma$ is a sequence of distinct vertices $v_0, v_1,\ldots, v_k$ such that $\{v_i, v_{i+1}\} \in E$ for all $i$. The distance $d(u, v)$ between two vertices is the length of a shortest path connecting them, or $\infty$ if no such path exists. The eccentricity $\operatorname{ecc}(v)$ of a vertex $v$ is $\max\{d(v, u)\mid u\in V\}$. The diameter $\operatorname{diam}(\Gamma)$ is the maximum eccentricity. 
A graph $\Gamma$ is connected if there is a path between every pair of vertices. 
A clique is a set of pairwise adjacent vertices. The clique number $\omega(\Gamma)$ is the size of a largest clique. An independent set is a set of pairwise non-adjacent vertices. 


Throughout the paper, we use $\Gamma(L)^*$ to denote the subgraph obtained
from $\Gamma(L)$ by removing the isolated vertices. More generally, if $S$
is a set of subalgebras of $L$, we write $\Gamma(L)[S]$ for the induced
subgraph of $\Gamma(L)$ on $S$. In particular, when $F(L) \neq L$, and $S=\{A < L\mid 0\neq A \not\subseteq F(L)\}$, then we write
\[\Gamma_F(L) := \Gamma(L)[S]\]
for the subgraph obtained from $\Gamma(L)$ by removing the vertices contained
in $F(L)$. Note that $\Gamma(L)^* \subseteq \Gamma_F(L)$, with equality if
and only if every isolated vertex of $\Gamma(L)$ is contained in $F(L)$.

\subsection*{Basics on Lie Algebras.}
Let $L$ be a finite-dimensional Lie algebra over a field $\mathbb{F}$. A \emph{Lie algebra} is a vector space $L$ over $\mathbb{F}$, equipped with a bilinear operation $[\cdot,\cdot]: L \times L \to L$, called the \emph{Lie bracket}, satisfying the following axioms:
\begin{enumerate}
    \item \textbf{Alternating property:} $[x,x] = 0$ \ $\forall  x \in L$;
    \item \textbf{Jacobi identity:} $[x,[y,z]] + [y,[z,x]] + [z,[x,y]] = 0$ $\forall x, y, z \in L$.
\end{enumerate}
These imply \textbf{anticommutativity}: $[x,y] = -[y,x]$ for all $x,y \in L$.

A Lie algebra is said to be \emph{abelian} if $[x,y] = 0$ for all $x,y \in L$. Every one-dimensional Lie algebra is abelian. The \emph{dimension} of a Lie algebra is its dimension as a vector space over $\mathbb{F}$. A \emph{subalgebra} of $L$ is a subspace closed under the Lie bracket. If $A, B$ are subspaces of $L$, then $[A,B]$ denotes the subspace spanned by all $[a,b]$ with $a \in A$, $b \in B$. The \emph{centraliser} of an element $x\in L$ is $C_L(x)=\{y\in L \mid [x,y]=0\}$.

The Frattini subalgebra $F(L)$ of a Lie algebra $L$ is the intersection of all maximal subalgebras of $L$. It is the largest subalgebra of $L$ with the property that every subalgebra generated by $F(L)$ together with any other proper subalgebra is still proper. If $L$ has no maximal subalgebras, we set $F(L) = L$. 

A subalgebra $B$ of $L$ is called a Borel subalgebra if it is a maximal solvable subalgebra of $L$. In the case $\mathfrak{sl}_2(\mathbb{F}_q)$, all Borel subalgebras are conjugate and two-dimensional. An element $x\in L$ is called nilpotent if $\ad\, x$ is a nilpotent linear map, and semisimple if $\ad\, x$ is a semisimple linear map. In $\mathfrak{sl}_2(\mathbb{F}_q)$, a semisimple element is called split if its characteristic polynomial splits over $\F_q$, and nonsplit otherwise.

 Let $\F$ be a field. Standard examples include: 
\begin{itemize}
\item $\mathfrak{gl}_n(\F)$, the Lie algebra of all $n \times n$ matrices over $\F$ with bracket $[x,y] = xy - yx$;
\item $\mathfrak{sl}_n(\F)\subseteq \mathfrak{gl}_n(\F)$, the subalgebra of traceless $n \times n$ matrices over $\F$, which is simple for $n\geq 2$. In particular, $\mathfrak{sl}_2(\F_q)$, the three-dimensional simple Lie algebra over a finite field, which serves as the central example of this paper, with standard basis $\{x, y, h\}$, where
   \[x =   \begin{pmatrix} 
0 & 1 \\ 0 & 0 
\end{pmatrix},
  \quad
  y =   \begin{pmatrix} 
  0 & 0 \\ 1 & 0 
  \end{pmatrix},
  \quad
  h =  \begin{pmatrix} 
  1 & 0 \\ 0 & -1 
  \end{pmatrix},\]
and Lie brackets $[h,x]=2x,\qquad [h,y]=-2y,\qquad [x,y]=h$. 
\end{itemize}

The \emph{derived series} of $L$ is defined recursively by $L^{(0)} = L$, and $L^{(k+1)} = [L^{(k)}, L^{(k)}]$.
Further, the \emph{lower central series} is given by $L^{1}=L$, and $L^{i+1}=[L,L^{i}]$. The derived algebra of $L$, usually denoted by $L'$ is $L^2 = [L, L]$. It is spanned by all products of the form $[x, y]$ with $x, y\in L$; $L$ is called perfect if $L^2 = L$. The Lie algebra $L$ is called \emph{solvable} if $L^{(r)} = 0$ for some $r$, and \emph{nilpotent} if $L^k = 0$ for some $k$. Every nilpotent Lie algebra is solvable, but the converse is not true.

\section{Triangle structure in the comaximal graph} 

In this section, we study the number of triangles in the comaximal graph
$\Gamma(L)$ for several classes of low-dimensional Lie algebras over the
finite field $\F_q$. Recall that a triangle in a graph is a $3$-cycle, equivalently, a set of
three pairwise adjacent vertices. We denote by $\mathfrak{t}(\Gamma)$ the number of
triangles of a graph $\Gamma$.

\subsection{Number of triangles in the comaximal graph of a 1-dimensional Lie algebra}
If $L$ is a $1$-dimensional Lie algebra over the field $\F_q$, then the comaximal graph $\Gamma(L)$ is the empty graph. Thus, it has no triangles. See Section 4.1 in \cite{TZG}.

\subsection{Number of triangles in the comaximal graph of a 2-dimensional Lie algebra}
If $L$ is a $2$-dimensional Lie algebra over the field $\F_q$, then the comaximal graph $\Gamma(L)$ is the complete graph $K_{q+1}$. See Section 4.2 in \cite{TZG}. Therefore, the number of triangles in $\Gamma(L)$ is $\tbinom{q+1}{3}$.

\subsection{Number of triangles in the comaximal graph of a 3-dimensional Lie algebra}
Throughout this subsection, $L$ denotes a three-dimensional Lie algebra over the finite field $\F_q$. We use the notation introduced in subsection 4.3 in \cite{TZG}. For example, $\mathcal L$ denotes the set of 1-dimensional Lie subalgebras of $L$ (lines), and $\mathcal P$ the set of 2-dimensional Lie subalgebras of $L$ (planes).

\smallskip

\begin{lemma}\label{T3P}
Let $\Gamma$ be a graph in which the induced subgraph on a set $\mathcal{P}$ of vertices is a complete graph $K_{m}$.  Then the number of triangles of $\Gamma(L)$ whose three vertices all lie in $\mathcal{P}$ is $\tbinom{m}{3}$.
\end{lemma}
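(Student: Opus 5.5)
The plan is a direct counting argument. A triangle of $\Gamma$ is, by the definition recalled at the start of this section, a set of three pairwise adjacent vertices. A triangle whose three vertices all lie in $\mathcal{P}$ is therefore exactly a $3$-element subset $\{A,B,C\}\subseteq\mathcal{P}$ whose members are pairwise adjacent in $\Gamma$. Adjacency between two vertices of $\mathcal{P}$ is the same in $\Gamma$ and in the induced subgraph $\Gamma[\mathcal{P}]$, since the induced subgraph keeps every edge of $\Gamma$ joining two of its vertices. So the triangles in question are precisely the triangles of $\Gamma[\mathcal{P}]$.

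The key step is then to count the triangles of $\Gamma[\mathcal{P}]\cong K_m$. In a complete graph every pair of distinct vertices is adjacent, so every $3$-element subset of $\mathcal{P}$ spans a triangle. Conversely, each triangle is determined by its vertex set, because a triangle is a set of three vertices, so distinct triangles correspond to distinct $3$-subsets. This gives a bijection between triangles with all vertices in $\mathcal{P}$ and $3$-subsets of an $m$-element set, and hence the count $\binom{m}{3}$. This also covers $m<3$, where the count is $0$.

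There is no real obstacle. The only point needing care is the notational slip in the statement ($\Gamma(L)$ versus $\Gamma$). I will note that the lemma is purely graph-theoretic, and that it will be applied with $\Gamma=\Gamma(L)$ and $\mathcal{P}$ the set of planes (or any vertex set inducing a clique). I will also make sure triangles are counted as unordered vertex sets, not as labelled $3$-cycles, which would introduce a factor of $6$ (or $3!$ orderings).
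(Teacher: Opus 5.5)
Your proposal is correct and follows the paper's one-line proof: every $3$-subset of $\mathcal{P}$ is pairwise adjacent, hence a triangle, giving $\binom{m}{3}$. Your extra remarks are accurate and only make explicit what the paper leaves implicit: adjacency agrees between $\Gamma$ and the induced subgraph, triangles are counted as unordered vertex sets, and $\Gamma(L)$ in the statement should read $\Gamma$.
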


\begin{proof}
Every triple of distinct vertices in $\mathcal{P}$ is mutually adjacent, hence forms a triangle.
\end{proof}

\smallskip

\textbf{Case A. $\dim L'=0$ - The abelian algebra.} Recall from \cite[Proposition 4.2]{TZG} that the set of vertices of $\Gamma(L)$ is $\mathcal{L} \sqcup \mathcal{P}$; two lines are never adjacent, and two distinct planes are always adjacent; a line $A$ and a plane $B$ are adjacent if and only if $A \nsubseteq B$, and finally, $|\mathcal L| = |\mathcal P| = q^2+q+1$. We now count the triangles in $\Gamma(L)$ according to the types of their
vertices.

\begin{proposition}\label{Abelian}
The number of triangles in $\Gamma(L)$ is
\[\mathfrak{t}(\Gamma(L)) = \frac{q(q+1)(4q^4+2q^3+q^2-3q-1)}{6}.\] 
\end{proposition}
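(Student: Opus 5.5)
We partition the triangles of $\Gamma(L)$ according to how many of their vertices are lines and how many are planes, using the adjacency description from \cite[Proposition 4.2]{TZG} recalled above. Since two lines are never adjacent, a triangle contains at most one line. So every triangle has one of two types: three planes (type PPP), or one line and two planes (type LPP). We count each type separately and add.

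\textbf{Type PPP.} Any two distinct planes are adjacent, so the induced subgraph on $\mathcal P$ is the complete graph $K_N$ with $N=q^2+q+1$. By Lemma \ref{T3P}, the number of such triangles is
\[
\binom{N}{3}=\frac{N(q^2+q)(q^2+q-1)}{6}.
\]

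\textbf{Type LPP.} Fix a line $A$. A triangle containing $A$ consists of $A$ together with two distinct planes $B,C$, each adjacent to $A$; the planes $B$ and $C$ are automatically adjacent to each other. A plane is adjacent to $A$ exactly when it does not contain $A$. In $\F_q^3$, a line lies in exactly $q+1$ planes: these correspond to the lines of the quotient $\F_q^3/A\cong\F_q^2$. Hence $A$ is adjacent to exactly $N-(q+1)=q^2$ planes, and $A$ lies in $\binom{q^2}{2}$ triangles of type LPP. Summing over the $N$ lines gives
\[
N\binom{q^2}{2}=\frac{3Nq^2(q^2-1)}{6}.
\]

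\textbf{Adding and simplifying.} Both terms contain the factor $Nq(q+1)/6$, so the total is
\[
\frac{Nq(q+1)\bigl[(q^2+q-1)+3q(q-1)\bigr]}{6}=\frac{q(q+1)\,N\,(4q^2-2q-1)}{6}.
\]
It remains to expand
\[
(q^2+q+1)(4q^2-2q-1)=4q^4+2q^3+q^2-3q-1,
\]
which gives the stated formula.

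There is no real obstacle here. The only points needing care are the count of $q+1$ planes through a line, which follows from the quotient $\F_q^3/A\cong\F_q^2$, and the final algebraic expansion.
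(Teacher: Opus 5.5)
Your proof is correct and follows the paper's decomposition by vertex type: three planes, or two planes and one line, since no two lines are adjacent. The one difference is in the mixed case, where you fix the line and choose two of the $q^2$ planes not containing it, while the paper fixes a pair of planes and counts the $q^2-q$ lines outside both; the two counts $(q^2+q+1)\binom{q^2}{2}$ and $\binom{q^2+q+1}{2}(q^2-q)$ agree, and your explicit expansion of $(q^2+q+1)(4q^2-2q-1)$ supplies the final algebraic step the paper leaves implicit.
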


\begin{proof}
We classify triangles according to the number of lines and planes appearing
among their vertices.

\smallskip

\noindent
\textbf{1. Three planes.} By Lemma \ref{T3P} with $m=q^2+q+1$ we have that the number of such triangles is
\begin{equation}\label{AbelianT}
T_{3P} = \binom{q^2+q+1}{3}.
\end{equation}

\noindent \textbf{2. Two planes and one line.} Two distinct planes $P_1, P_2$ intersect in a unique line, so the number of lines contained in $P_1\cup P_2$ is $(q+1)+(q+1)-1 = 2q+1$. Then, the number of lines outside both is $q^2+q+1 -(2q+1) = q^2-q$. Since every such line is adjacent to both planes:
\begin{equation}\label{AbelianT2}
T_{2PL} = \binom{q^2+q+1}{2}(q^2-q).
\end{equation}  

\noindent \textbf{3. At least two lines.} Since no two lines are adjacent,  $T_{P2L} = T_{3L} =0$.

Combining the above counts \eqref{AbelianT} and \eqref{AbelianT2} yields
\[\mathfrak{t}(\Gamma(L)) = \frac{(q^2+q+1)(q^2+q)(4q^2-2q-1)}{6}.\]
\end{proof}

\textbf{Case B1. $\dim L'=1$ - The nonabelian nilpotent algebra (The Heisenberg algebra).}
Let $B= \{e,f,h\}$ be a basis for $L$, with brackets $[e,f]=h$, and $[e,h]=[f,h]=0$. Let $Z(L) = L' = \langle h\rangle = \F_q h$ be its center and derived algebra. Recall from \cite[Proposition 4.3]{TZG} that: the set of vertices of $\Gamma(L)$ are the non-central lines and the two-dimensional Lie subalgebras; the central line $\F_q z$ is an isolated point; two distinct planes are always adjacent; two non-central lines are adjacent if and only if they span a two-dimensional subspace not containing $h$, and finally, a non-central line is adjacent to a plane if and only if it is not contained in that plane.

We now determine the number of triangles in $\Gamma(L)$.

\begin{proposition}\label{Heisenberg}
Let $L$ be the Heisenberg Lie algebra over $\F_q$. Then
\[t(\Gamma(L)) = \frac{q(q-1)(q+1)^4}{6}.\]
\end{proposition}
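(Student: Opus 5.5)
We classify the triangles of $\Gamma(L)$ by how many of their vertices are planes and how many are non-central lines, exactly as in Proposition \ref{Abelian}; the central line is isolated and can be ignored. The first step is to make the vertex set explicit using the description recalled from \cite[Proposition 4.3]{TZG}.

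\emph{Planes.} If a $2$-dimensional subspace $U$ does not contain $h$, then $U$ maps isomorphically onto $L/\F_q h$. So $U=\langle u,v\rangle$ with $[u,v]$ a nonzero multiple of $h$, and $U$ is not closed under the bracket. Conversely, every plane containing $h$ is an abelian ideal. Hence the planes are exactly the $q+1$ planes through $\F_q h$.

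\emph{Lines.} There are $q^2+q$ non-central lines. Each non-central line $\ell$ lies in exactly one plane, namely $\ell+\F_q h$. So the non-central lines split into $q+1$ classes of size $q$, one class per plane. The adjacency rule ``span not containing $h$'' then says that two non-central lines are adjacent if and only if they lie in different classes. Similarly, a line and a plane are adjacent if and only if the line is not in that plane's class.

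With this partition, the four cases are counted as follows.

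\textbf{Three planes.} The planes form a $K_{q+1}$, so Lemma \ref{T3P} gives $\binom{q+1}{3}$ triangles.

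\textbf{Two planes and one line.} Any two distinct planes meet only in $\F_q h$, so their classes are disjoint. Hence $q^2+q-2q=q^2-q$ lines are adjacent to both planes, giving $\binom{q+1}{2}(q^2-q)$ triangles.

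\textbf{One plane and two lines.} The two lines must come from two distinct classes, which can be chosen in $\binom{q+1}{2}q^2$ ways. The plane must be different from the two planes owning those classes, leaving $q-1$ choices. This gives $\binom{q+1}{2}q^2(q-1)$ triangles.

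\textbf{Three lines.} The three lines must come from three pairwise distinct classes, giving $\binom{q+1}{3}q^3$ triangles.

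The last step is the algebraic simplification of
\[
\binom{q+1}{3}(1+q^3)+\binom{q+1}{2}(q-1)(q+q^2)
\]
to $\frac{q(q-1)(q+1)^4}{6}$. Factor out $\frac{q(q+1)(q-1)}{6}$: the first summand leaves $1+q^3=(q+1)(q^2-q+1)$, and the second leaves $3q(q+1)$. Their sum is $(q+1)(q^2+2q+1)=(q+1)^3$, which gives the stated formula.

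As a sanity check, for $q=2$ the four counts are $1+6+12+8=27$, which agrees with $2\cdot1\cdot81/6$.

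The only step that needs genuine care is the first one: identifying the planes as exactly those through $h$, and recognising that adjacency among lines is governed by the partition into $q+1$ classes. Once that is in place, the rest is direct counting.
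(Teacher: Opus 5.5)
Your proof is correct and follows the paper's argument: you use the same four-way split by vertex type (three planes, two planes and a line, one plane and two lines, three lines), get the same counts, and rely on the same partition of the $q^2+q$ non-central lines into $q+1$ classes of size $q$. The only cosmetic difference is in the one-plane case, where you choose the two lines first and then one of the $q-1$ admissible planes, whereas the paper fixes the plane and counts the $\binom{q}{2}q^2$ edges of the induced complete multipartite graph; both give $(q+1)\binom{q}{2}q^2$.
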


\begin{proof}
We count triangles according to the types of their vertices.

By \cite[Proposition 4.3]{TZG}: the central line $\langle h\rangle$ is an isolated vertex; $|P|=q+1$ and the induced subgraph on $P$ is $K_{q+1}$; a non-central line is adjacent to a plane if and only if it is not contained in it; two non-central lines are adjacent if and only if they span a subspace not containing $h$.

\smallskip

\noindent
\textbf{1. Three planes.} By Lemma \ref{T3P} with $m=q+1$:
\[T_{3P} = \binom{q+1}{3}.\]

\smallskip

\noindent
\textbf{2. Two planes and one non-central line.} Every pair $P_1, P_2\in \mathcal{P}$ satisfies $P_1\cap P_2 = \F_q h = \langle h\rangle$, so  the number of non-central lines contained in $P_1\cup P_2$ equals $q+q=2q$. Then the $q^2-q$
non-central lines outside both are adjacent to both planes, giving
\[T_{2PL} = \binom{q+1}{2}(q^2-q).\]

\noindent
\textbf{3. One plane and two non-central lines.} The $q^2+q$ non-central lines partition into $q+1$ classes of size $q$ (each class corresponding to one plane).  Fix $P\in \mathcal{P}$; the $q^2$ non-central lines outside $P$ form $q$ classes of size $q$.  Two lines are adjacent if and only if they lie in different classes, so the induced graph is $K_{q,\ldots,q}$ (with $q$ parts).  The number of edges is $\tbinom{q}{2}q^2$, hence
\[T_{P2L} = (q+1)\binom{q}{2}q^2.\]

\noindent \textbf{4. Three non-central lines.}
Three non-central lines form a triangle if and only if they belong to three distinct parts of the complete multipartite graph induced by the non-central lines. Thus, the number of such triangles is
\[T_{3L} = \binom{q+1}{3}q^3.\]
Adding all contributions gives the stated formula.
\end{proof}

\textbf{Case B2: $\dim L' = 1$, solvable non-nilpotent.} Let $\{x,y,z\}$ be a basis with $[x,y]=y$ and $[x,z]=[y,z]=0$. Set $V = \langle y, z \rangle$, $L' = \langle y \rangle$, $Z(L) = \langle z \rangle$. 

We use throughout the structure of $\Gamma(L)$ established in \cite[Proposition 4.5]{TZG}.
The $q^2+q+1$ lines of $L$ fall into three classes:
\begin{enumerate}
  \item The lines $\langle y\rangle$, and $\langle z\rangle$ adjacent to no other line.
  \item The $q-1$ intermediate lines $I_{\alpha}=\langle\alpha y+z\rangle$,
        $\alpha\in\F_q^* $, each adjacent to every line outside $V$
        but to no line inside $V$.
  \item The $q^2$ lines $X_{a,b}=\langle x+ay+bz\rangle$, $a,b\in\F_q$,
        where $X_{a,b}\sim X_{c,d}$ if and only if $a\neq c$ and $b\neq d$. From now on, we will refer to this type of line as an $X$-line. 
\end{enumerate}
The $2q+1$ planes split into three families:
\begin{enumerate}
\item $V=\langle y,z\rangle$, containing $\langle y\rangle$, $\langle z\rangle$,        and all $q-1$ intermediate lines.

\item $q$ planes of the form $Q_b=\langle y,\,x+bz\rangle$, where $b\in\F_q$. Note that the intersection of any two of such planes is $\langle y\rangle$. Further, every plane $Q_b$ contain the $q$ lines $\{X_{a,b}\mid a\in\F_q\}$.

\item $q$ planes of the form $P_{\alpha}=\langle x+\alpha y,z\rangle$, with 
        $\alpha\in\F_q$, each containing $\langle z\rangle$ and the $q$ lines
        $\{X_{\alpha,t}\mid t\in\F_q\}$.
\end{enumerate}
Note that the planes containing $\langle y\rangle$ are exactly $V$ together with
the $q$ planes $Q_b$, for a total of $q+1$.
A line $A$ is adjacent to a plane $P$ if and only if $A\not\subseteq P$.

\begin{proposition}\label{B2}
Let $L$ be the three-dimensional solvable non-nilpotent Lie algebra over $\F_q$ with $\dim L'=1$. Then
\[\mathfrak{t}(\Gamma(L)) = \frac{q(q+1)(q^4+2q^3+5q^2-6q-2)}{6}.\]

\end{proposition}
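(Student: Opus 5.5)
We classify the triangles of $\Gamma(L)$ by how many of their vertices are planes and how many are lines, as in Propositions \ref{Abelian} and \ref{Heisenberg}, using the description of $\Gamma(L)$ from \cite[Proposition 4.5]{TZG} recalled above. We then add the four contributions and simplify to the stated polynomial.

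\textbf{Three planes.} Any two distinct planes span $L$ as a vector space, so they are adjacent. Hence the $2q+1$ planes induce $K_{2q+1}$, and Lemma \ref{T3P} gives $T_{3P}=\binom{2q+1}{3}$.

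\textbf{Two planes and a line.} Since a line is adjacent to a plane exactly when it is not contained in it, it is easiest to sum over lines: a line $A$ lies in $\binom{n_A}{2}$ such triangles, where $n_A$ is the number of planes not containing $A$. From the plane families we read off the planes through each line.
\begin{itemize}
\item $\langle y\rangle$ lies in $V$ and all $Q_b$, so $n=q$.
\item $\langle z\rangle$ lies in $V$ and all $P_\alpha$, so $n=q$.
\item Each $I_\alpha$ lies only in $V$, so $n=2q$.
\item Each $X_{a,b}$ lies exactly in $Q_b$ and $P_a$, so $n=2q-1$.
\end{itemize}
This yields
\[T_{2PL}=2\binom{q}{2}+(q-1)\binom{2q}{2}+q^2\binom{2q-1}{2}.\]

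\textbf{One plane and two lines.} Here we sum over adjacent pairs of lines, counting the planes containing neither line. The lines $\langle y\rangle$ and $\langle z\rangle$ have no line-neighbours, and intermediate lines are not adjacent to one another. So the line–line edges are of two kinds.
\begin{itemize}
\item Edges $I_\alpha\sim X_{a,b}$: there are $(q-1)q^2$ of them. The excluded planes are $V$, $Q_b$ and $P_a$, leaving $2q-2$.
\item Edges $X_{a,b}\sim X_{c,d}$ with $a\neq c$ and $b\neq d$: there are $q^2(q-1)^2/2$ of them. The excluded planes are $Q_b,Q_d,P_a,P_c$, leaving $2q-3$.
\end{itemize}
Hence
\[T_{P2L}=(q-1)q^2(2q-2)+\tfrac{q^2(q-1)^2}{2}(2q-3).\]

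\textbf{Three lines.} A triangle of lines cannot contain $\langle y\rangle$ or $\langle z\rangle$, and contains at most one intermediate line. There are two kinds.
\begin{itemize}
\item One $I_\alpha$ together with an adjacent pair of $X$-lines gives $(q-1)\cdot q^2(q-1)^2/2$ triangles.
\item Three $X$-lines with pairwise distinct first coordinates and pairwise distinct second coordinates are counted by choosing three values of $a$, three values of $b$, and a bijection between them. This gives $6\binom{q}{3}^2=q^2(q-1)^2(q-2)^2/6$ triangles.
\end{itemize}

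Finally we add $T_{3P}+T_{2PL}+T_{P2L}+T_{3L}$ and factor the result, checking that it equals $\frac{q(q+1)(q^4+2q^3+5q^2-6q-2)}{6}$; a spot check at $q=2$ is a useful safeguard. I expect the main obstacle to be the incidence bookkeeping rather than any conceptual step. One must check carefully which planes contain each intermediate line: $I_\alpha\notin Q_b$ and $I_\alpha\notin P_{\alpha'}$ because $\alpha\neq0$. One must also check that no single plane contains both ends of an $X$–$X$ edge, since this is what makes the excluded count exactly $4$. Any slip in these counts propagates directly into the final polynomial.
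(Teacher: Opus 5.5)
\textbf{The failing step is the final simplification you deferred: your four counts do not add up to the stated formula, which is too small by exactly $q^2$.}

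Your four component counts are correct and agree with the paper's:
\begin{itemize}
\item $T_{3P}=\binom{2q+1}{3}$;
\item $T_{2PL}=q^2(q-1)(2q+1)$;
\item $T_{P2L}=2q^2(q-1)^2+\tfrac12 q^2(q-1)^2(2q-3)=\tfrac12 q^2(q-1)^2(2q+1)$;
\item $T_{3L}$ is the same as in the paper.
\end{itemize}
You sum $T_{2PL}$ over lines and $T_{P2L}$ over line--line edges, whereas the paper sums over pairs of planes and over planes. Your bookkeeping is tidier, since it needs only the list of planes through each line.

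Expanding both sides gives
\[
6\,\mathfrak{t}(\Gamma(L))=q^6+3q^5+7q^4-q^3-2q^2-2q,
\qquad
q(q+1)(q^4+2q^3+5q^2-6q-2)=q^6+3q^5+7q^4-q^3-8q^2-2q .
\]
Your own suggested spot check exposes this. At $q=2$ your counts give $10+20+10+2=42$, whereas the stated formula gives $38$. You can confirm $42$ by hand on the $12$-vertex graph. It has five pairwise adjacent planes. Its only line--line edges are $\langle y+z\rangle$ to each of the four $X$-lines, plus $\langle x\rangle\sim\langle x+y+z\rangle$ and $\langle x+y\rangle\sim\langle x+z\rangle$.

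Moreover, $q^5+3q^4+7q^3-q^2-2q-2$ equals $-6$ at $q=-1$. So, as a polynomial, the true count is not divisible by $q+1$ and cannot be written in the stated shape at all.

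Consequently the proposal cannot be completed into a proof of the proposition as stated, because the statement itself is off by $q^2$. The paper's proof has exactly the same defect: its components are identical to yours, and it merely asserts that adding them gives the stated formula. What your argument actually establishes is
\[
\mathfrak{t}(\Gamma(L))=\frac{q(q^5+3q^4+7q^3-q^2-2q-2)}{6}=\frac{q(q+1)(q^4+2q^3+5q^2-6q-2)}{6}+q^2 .
\]
The same correction applies to Proposition~\ref{2EigenVal}, which is deduced from this one via the lattice isomorphism.
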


\begin{proof}
We classify triangles by vertex type.

\medskip
\noindent\textbf{1. Three planes.} By Lemma \ref{T3P} with $m=2q+1$ we have: $T_{3P}= \binom{2q+1}{3} =\frac{q(4q^2-1)}{3}$.

\smallskip
\noindent\textbf{2. Two planes and one line.}
For a line $A$ and a plane $P$ of $L$,  the structural description is as follows:

\begin{enumerate}
\item[(1)] The lines $\langle y \rangle$ and $\langle z \rangle$ are adjacent
to no other line, but $\langle y \rangle \sim P_\alpha$ for every
$\alpha \in \mathbb F_q$, and $\langle z \rangle \sim Q_b$ for every
$b \in \mathbb F_q$; neither is adjacent to $V$, and $\langle y \rangle$ is
not adjacent to any $Q_b$, nor $\langle z \rangle$ to any $P_\alpha$.
\item[(2)] The $q-1$ intermediate lines $I_\alpha = \langle \alpha y + z
\rangle$, $\alpha \in \mathbb F_q^*$, each adjacent to every line outside
$V$ but to no line inside $V$.
\item[(3)] The $q^2$ lines $X_{a,b} = \langle x + ay + bz \rangle$,
$a,b \in \mathbb F_q$, where $X_{a,b} \sim X_{c,d}$ if and only if $a \neq c$
and $b \neq d$.
\end{enumerate}

We separate into five pair types:
\begin{enumerate}
\item[(i)] Pair $(Q_{b_1},\,Q_{b_2})$. The $q-1$ intermediates lie outside both planes; each $X_{a,b}$ satisfies $X_{a,b}\sim Q_{b_i}$ if and only if $b\neq b_i$, so
$b\notin\{b_1,b_2\}$ gives $q(q-2)$ valid $X$-lines.  In addition, $\langle z \rangle$ is adjacent to
every $Q_b$, hence to both $Q_{b_1}$ and $Q_{b_2}$. Including this line, the count per pair is $q^2-q$. With
$\binom{q}{2}$ pairs: $T_{2PL}^{(i)} = \binom{q}{2}\,q(q-1)$.


\smallskip

\item[(ii)] Pair $(P_{\alpha_1},\,P_{\alpha_2})$. By the symmetric argument
(swap $a$- and $b$-coordinates), together with the contribution of
$\langle y \rangle$ (adjacent to every $P_\alpha$), the count per pair is
also $q^2-q$, giving $T_{2PL}^{(ii)} = T_{2PL}^{(i)}$.

\smallskip

\item[(iii)] Pair $(Q_b,\,P_\alpha)$. In this case, their intersection is the line $X_{\alpha, b}$, and all $q-1$ intermediate lines are adjacent to both.
An $X$-line $X_{a,b'}$ is simultaneously adjacent to $Q_b$, and to $P_\alpha$
if and only if $b'\neq b$ and $a\neq\alpha$, giving $(q-1)^2$ such lines, and
neither $\langle y \rangle$ nor $\langle z \rangle$ is adjacent to both $Q_b$
and $P_\alpha$, since $\langle y \rangle \not\sim Q_b$ and $\langle z \rangle
\not\sim P_\alpha$). Hence $T_{2PL}^{(iii)} = q^3(q-1)$.  
\smallskip

\item[(iv)] Pair $(V, Q_b)$. Only $X$-lines are adjacent to $V$; $\langle z
\rangle$ is not adjacent to $V$, giving $q(q-1)$ lines adjacent to both. Thus, $T_{2PL}^{(iv)}= q^2(q-1)$.
\smallskip

\item[(v)] Pair $(V, P_\alpha)$. Symmetric to (iv) we have  $T_{2PL}^{(v)}= q^2(q-1)$.
\end{enumerate}
Summing the five terms, we have
\[T_{2PL} = q^2(q-1)^2 + q^3(q-1) + 2q^2(q-1) = q^2(q-1) ((q-1)+q+2) = q^2(q-1)(2q+1).\]

\noindent\textbf{3. One plane and two lines.} We compute the number of edges among lines adjacent to a plane $P$, for each plane type. We denote it by $e_P$. Then we have $T_{P2L}=\sum_{P\in \mathcal{P}} e_P$. Fix a plane $P\in \mathcal{P}$.
\begin{enumerate}
\item[(i)] $P=V$. Lines adjacent to $V$ are the $q^2$ $X$-lines.  Non-adjacent pairs share an $a$-value ($q\binom{q}{2}$ pairs) or a $b$-value (another $q\binom{q}{2}$ pairs), with the two events disjoint.  Hence
\begin{equation}\label{eV-B2}
  e_{V}=\binom{q^2}{2}-2q\binom{q}{2}=\frac{q^2(q-1)^2}{2}.
\end{equation}

\smallskip

\item[(ii)] $P= Q_b$. Lines adjacent to $Q_{b}$: $q-1$ intermediates and $q(q-1)$ $X$-lines with $b'\neq b$.  No two intermediates are adjacent ($0$ edges); each intermediate is adjacent to every $X$-line ($q(q-1)^2$ edges); among $X$-lines, non-adjacent pairs share an $a$-value ($q\binom{q-1}{2}$) or a $b'$-value ($(q-1)\binom{q}{2}$), giving
\begin{equation}\label{e2X-B2}
e_{2X} =\binom{q(q-1)}{2}-q\binom{q-1}{2}-(q-1)\binom{q}{2} =\frac{q(q-1)^2(q-2)}{2}.
\end{equation}
Hence $e_{Q_{b}}=q(q-1)^2+\frac{q(q-1)^2(q-2)}{2}=\frac{q^2(q-1)^2}{2}$.

\smallskip

\item[(iii)] $P= P_{\alpha}$. By the symmetric argument (swapping $a$- and $b$-coordinates) follows $e_{P_{\alpha}} =\frac{q^2(q-1)^2}{2}$.
\end{enumerate}
Every plane gives $e_{P}=\frac{q^2(q-1)^2}{2}$, so
\[T_{P2L}=(2q+1) \frac{q^2(q-1)^2}{2} =\frac{q^2(q-1)^2(2q+1)}{2}.\]
Note that neither $\langle y\rangle$ nor $\langle z \rangle$ is adjacent to any line, their presence
among the vertices adjacent to a fixed plane $P$ contributes no new edges. 

\smallskip
\noindent\textbf{4. Three lines.} Since $\langle y \rangle$ and $\langle z
\rangle$ are adjacent to no line, $\langle y\rangle$, $\langle z\rangle$ are in no triangle; no two intermediates are adjacent. Then, in this case, only two options arise.
\begin{enumerate}
\item[(i)] One intermediate line and two $X$-lines. Any adjacent $X$-pair forms a triangle with each of the $q-1$ intermediates, giving
\[T_{3L}^{(i)} = (q-1) \frac{q^2(q-1)^2}{2}=\frac{q^2(q-1)^3}{2}.\]

\item[(ii)] Three $X$-lines. $X_{a,b},X_{c,d},X_{e,f}$ are mutually adjacent if and only if $a,c,e$ are pairwise distinct and $b,d,f$ are pairwise distinct.  Counting ordered triples and dividing by $3!$: 
\[T_{3L}^{(ii)} =\frac{(q(q-1)(q-2))^2}{6}=\frac{q^2(q-1)^2(q-2)^2}{6}.\]
\end{enumerate}
Combining the two sub-cases, we have
\[T_{3L}=\frac{q^2(q-1)^3}{2}+\frac{q^2(q-1)^2(q-2)^2}{6}.\]
Adding all contributions gives the stated formula.
\end{proof}

\subsection{Comaximal graph of a 3-dimensional Lie algebra}
A unifying feature of the cases considered below is that, whenever $\dim L' \geq 1$, the algebra $L$ admits a decomposition $L = \langle x \rangle \ltimes V$, where $V$ is a two-dimensional ideal. The structure of $L$ is then determined by the Endomorphism $T = \ad\, x\mid_V$. The different cases correspond to different types of the operator $T$. For example, in the case $\dim L' = 2$, the operator $T$ is invertible, and its Jordan type over $\F_q$ determines the number of two-dimensional subalgebras. This notation and decomposition will be used without further comment throughout all cases below.

\smallskip

\textbf{Case C. $\dim L'=2$ - Solvable non-nilpotent algebras.}
Here, $L'$ is a 2-dimensional abelian ideal of $L$. In the notation of the introduction of this subsection, the structure of $\Gamma(L)$ depends on the Jordan type of the operator $T = \ad x |_V$ over $\F_q$, which determines the number and type of two-dimensional subalgebras of $L$. We distinguish four subcases according to the canonical form of $T$ over $\F_q$.

\begin{lemma}\cite[Lemma 4.6]{TZG} \label{2dim-subalgebras}
Let $L=Fx+V$ where $V=Fv_1+Fv_2$ and $[x,v_1]=\alpha v_1+\beta v_2$, $[x,v_2]=\gamma v_1+\delta v_2$ where $A=\left(\begin{matrix} \alpha&\beta\\ \gamma & \delta \end{matrix}\right)$ is a non-singular matrix. Then
\begin{itemize}
\item[(i)] If $\ad x$ has no eigenvector in $V$, the only two-dimensional subalgebra of $L$ is $V$; 
\item[(ii)] if $\ad x$ has an eigenvector in $V$, the two-dimensional subalgebras of $L$ are $V$ or of the form $Fx+Fv$ where $v$ is an eigenvector, and $Fv+F(x+v_3)$ where $v$ is an eigenvector and $v_3$ is linearly independent of $v$.
\end{itemize}
\end{lemma}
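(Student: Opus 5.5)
We need to prove Lemma \ref{2dim-subalgebras}: the two-dimensional subalgebras of $L=Fx+V$, where $V$ is an abelian ideal (implicitly $[v_1,v_2]=0$) and $\ad x|_V$ is invertible. The plan is to take an arbitrary two-dimensional subalgebra $S$ and split into cases according to whether $S=V$ or not.

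\textbf{Step 1: reduce to $S\neq V$.} Since $V$ is abelian, it is itself a subalgebra. If $S\neq V$, then $\dim(S\cap V)\geq 1$ by dimension count in the $3$-dimensional space $L$, and $S\not\subseteq V$. Hence $S\cap V=Fv$ for some nonzero $v\in V$. Moreover $S$ contains an element $x+w$ with $w\in V$: any element of $S\setminus V$ has nonzero $x$-coefficient, and we can scale it. Thus $S=Fv+F(x+w)$.

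\textbf{Step 2: $v$ must be an eigenvector.} Closure gives $[x+w,v]=[x,v]\in S\cap V=Fv$, since $V$ is an abelian ideal. So $v$ is an eigenvector of $\ad x|_V$, with nonzero eigenvalue because $A$ is non-singular.
\begin{itemize}
\item In case (i) there is no eigenvector, so no such $S$ exists and $V$ is the only two-dimensional subalgebra.
\item In case (ii), subtracting a multiple of $v$ from $w$ does not change $S$, so $w$ is determined only modulo $Fv$.
\end{itemize}

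\textbf{Step 3: the description in case (ii).} If $w\in Fv$, then $S=Fx+Fv$. Otherwise we may take $w=v_3$ linearly independent of $v$, giving $S=Fv+F(x+v_3)$.

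\textbf{Step 4: the converse.} Every such span is a subalgebra. Indeed, $[x+w,v]=\lambda v\in S$ whenever $v$ is an eigenvector with eigenvalue $\lambda$. The spans listed are distinct from $V$ since they contain an element outside $V$.

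\textbf{Main subtlety.} The main care is needed in interpreting the statement: the second family also allows $w\in Fv$, which merges into the first family. I would note explicitly that the two families together are exactly $\{Fv+F(x+w): v \text{ an eigenvector},\ w\in V\}$, so the case split in Step 3 is only cosmetic.
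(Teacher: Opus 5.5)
Your proof is correct and is the natural direct argument; the paper gives no proof of its own here and simply quotes \cite[Lemma 4.6]{TZG}. A two-dimensional $S\neq V$ meets $V$ in a line $Fv$ and contains some $x+w$ with $w\in V$. Since $V$ is an abelian ideal, $[x+w,v]=[x,v]\in S\cap V$ forces $v$ to be an eigenvector, and the converse is immediate.

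You were also right to make $[v_1,v_2]=0$ explicit, because the lemma is false without it. In $\mathfrak{sl}_2(\F)$, $\ad h$ acts non-singularly on the span of the standard $x,y$, yet that span is not even a subalgebra. The assumption does hold in the paper's setting, since there $V=L'$ is abelian.
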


\smallskip

So there are four cases for algebras in this case.
\smallskip

\noindent {\emph Case 1: The characteristic equation of $A$ is irreducible.} Then the only two-dimensional subalgebra is $V$.  
\smallskip

\noindent {\emph Case 2: There are two different eigenvalues $\lambda$ and $\mu$.} Then, replacing $x$ by $\lambda^{-1}x$, we have a basis such that $[x,v_1]=v_1$, $[x,v_2]=\mu v_2$. The two-dimensional subalgebras are $V$, $Fv_1+F(x+\alpha v_2)$ and $Fv_2+F(x+\alpha v_1)$.  

\smallskip

\noindent {\emph Case 3: There is only one eigenvalue $\lambda$ and $A$ is not diagonable.} Then there is a basis such that $[x,v_1]=\lambda v_1$, $[x,v_2]=v_1+\lambda v_2$. The two-dimensional subalgebras are $V$, and $Fv_1+F(x+\alpha v_2)$.  
\smallskip

\noindent {\emph Case 4: $A$ is a scalar matrix, $\lambda I_2$.}  Then, replacing $x$ by $\lambda^{-1} x$, there is a basis such that $[x,v_1]=v_1$, $[x,v_2]=v_2$.

\begin{proposition}\cite[Proposition 4.7]{TZG}\label{dim3-derived2}
Let $L$ be a three-dimensional Lie algebra over the finite field $\F_q$ such that $\dim L' = 2$. As before, let $\mathcal{L}$ denote the set of one-dimensional Lie subalgebras of $L$, and let $\mathcal{P}$ denote the set of two-dimensional Lie subalgebras of $L$. Then 
\begin{enumerate}
\item $\mathcal{L}$ has $q^2 + q + 1$ elements.
\item The number of two-dimensional Lie subalgebras is determined by the canonical form of $\ad x$ over $\F_q$ as follows:
\[|\mathcal{P}| = \begin{cases}
    1 &  \text{in case 1}\\
    1+2q &  \text{in case 2}\\
    1+q &  \text{in case 3}\\
    1+q+q^2 &  \text{in case 4}.
\end{cases}\]

\item The following rules hold in all four cases:
\begin{enumerate}
    \item If $A, B\in \mathcal{P}$ with $A\neq B$, then $A\sim B$. Hence, the induced subgraph on $\mathcal{P}$ is a complete graph.
    \item If $A \in \mathcal{L}$ and $P \in \mathcal{P}$, then $A \sim P$ if and only if $A \not\subseteq P$.
    \item If $A, B \in \mathcal{L}$ are distinct, then $A \not\sim B$ if and only if $A$ and $B$ are contained in a common plane of $\mathcal{P}$.
\end{enumerate}
    In particular:
\begin{enumerate}
    \item[(i)] In Case 1, $\Gamma(L)[\mathcal{L}]$ is the join of an independent 
    set on the $q+1$ lines of $V$ with a complete graph $K_{q^2}$ on the 
    lines outside $V$.
    \item[(ii)] In Case 3, the unique eigenline $\langle v_1\rangle$ is an isolated vertex of $\Gamma(L)$, since it is contained in every plane of $\mathcal{P}$.
    \item[(iii)] In Case 4, no two lines are adjacent, since every pair of lines 
    spans a plane in $\mathcal{P}$.
\end{enumerate}
\end{enumerate}
\end{proposition}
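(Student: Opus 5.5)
We prove the three parts in order, starting from the decomposition $L=\F_q x\ltimes V$ with $V=L'$ abelian and $T=\ad x|_V$ invertible. Part (1) is immediate: every one-dimensional subspace is a subalgebra, so $|\mathcal L|$ is the number of points of the projective plane over $\F_q$, namely $q^2+q+1$.

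\textbf{Part (2).} We enumerate the planes of Lemma \ref{2dim-subalgebras} in each of the four canonical forms. In Case 1 only $V$ occurs. In Case 2 we have $V$ together with the families $Fv_1+F(x+\alpha v_2)$ and $Fv_2+F(x+\alpha v_1)$ for $\alpha\in\F_q$; these $2q$ planes are pairwise distinct, since each determines its eigenline and its parameter $\alpha$, giving $1+2q$. In Case 3 there is a single eigenline, so the planes are $V$ and the $q$ planes $Fv_1+F(x+\alpha v_2)$, giving $1+q$. In Case 4 every line of $V$ is an eigenline. A plane $P\neq V$ meets $V$ in a line $Fv$, and $P=Fv+F(x+w)$. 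Since $[x+w,v]=v$, every such subspace is closed, so every plane of $L$ is a subalgebra. Hence $|\mathcal P|=q^2+q+1$.

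\textbf{Part (3).} The key observation is that a subalgebra generated by two proper subalgebras is $L$ exactly when it is not contained in any maximal subalgebra. The key step is to show that the maximal subalgebras of $L$ are exactly the planes in $\mathcal P$. First, no one-dimensional subalgebra is maximal: every line lies in some plane of $\mathcal P$. Lines in $V$ lie in $V$. A line $F(x+w)$ with $w\in V$ lies in $F(x+w)+Fv$ for an eigenvector $v$ when one exists. In Case 1 there is no eigenvector, so we use instead that $\langle x+w\rangle$ together with $V$... we must check that $F(x+w)$ is contained in a proper subalgebra. This step is the main obstacle, because in Case 1 it is false: $F(x+w)$ lies in no plane and is itself maximal. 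In that case rule (c) still holds as stated, because two lines outside $V$ generate a subalgebra of dimension at least 2 which is not $V$, and is therefore $L$. So in Case 1 we verify the rules directly rather than through maximality.

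With the key step in hand, the rules follow.

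\textbf{(a)} Two distinct planes span $L$, since their sum is 3-dimensional and both are subalgebras.

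\textbf{(b)} If a line $A$ lies in a plane $P$, then $\langle A,P\rangle=P$, so they are not adjacent. If $A\not\subseteq P$, then $A+P=L$ as a space, so $\langle A,P\rangle=L$.

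\textbf{(c)} Two distinct lines generate a subalgebra of dimension at least 2. This subalgebra is either a plane of $\mathcal P$ containing both lines, or it is $L$. So the lines are non-adjacent precisely when they lie in a common plane of $\mathcal P$.

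\textbf{Consequences (i)--(iii).} These are read off from the rules.

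(i) In Case 1 the only plane is $V$. Pairs of lines inside $V$ are non-adjacent, and every other pair of lines is adjacent, which gives the stated join.

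(ii) In Case 3 the eigenline $Fv_1$ lies in every plane of $\mathcal P$. So by (b) it is adjacent to no plane, and by (c) it is adjacent to no line: any other line $\ell$ spans with $Fv_1$ a plane of the form $V$ or $Fv_1+F(x+\alpha v_2)$, and that plane is in $\mathcal P$. Hence $Fv_1$ is isolated.

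(iii) In Case 4 all planes are subalgebras, so by (c) no two lines are adjacent.
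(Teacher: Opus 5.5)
Your proof is correct and takes essentially the approach the paper indicates. The paper only quotes this result from the earlier work, with inline reasons for (ii) and (iii), and you follow those reasons: count the planes via Lemma~\ref{2dim-subalgebras}, checking directly that every plane is closed in Case~4; get (a)--(c) from $\langle A,B\rangle\supseteq A+B$ and the fact that two distinct lines generate either their span (when it lies in $\mathcal P$) or $L$; then read off (i)--(iii). You should delete the maximal-subalgebra paragraph in Part~(3): its ``key step'' fails in Case~1, where the lines outside $V$ are themselves maximal, and your proofs of (a)--(c) never use it, so ``with the key step in hand'' is misleading rather than load-bearing.
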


\smallskip

\begin{proposition}\label{irreducible}
Let $L$ be a three-dimensional Lie algebra over $\F_q$ with $\dim L'=2$ such that
the characteristic polynomial of $\ad x |_V$ is irreducible over $\F_q$
for some $x\notin L'$.  Then 
\[\mathfrak{t}(\Gamma(L)) = \frac{q^2(q^2-1)(q^2+3q+4)}{6}.\]
\end{proposition}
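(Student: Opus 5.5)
The plan is to count triangles by vertex type, as in Propositions \ref{Abelian}--\ref{B2}, using the explicit structure of $\Gamma(L)$ from Proposition \ref{dim3-derived2}. By Lemma \ref{2dim-subalgebras}(i) and Proposition \ref{dim3-derived2}(2), in Case 1 the only plane is $V$, so $\mathcal P=\{V\}$ and $|\mathcal L|=q^2+q+1$. The lines split into the $q+1$ lines contained in $V$ and the $q^2$ lines outside $V$.

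The first step is to record the adjacencies, using rules (3)(b) and (3)(c) of Proposition \ref{dim3-derived2}.
\begin{itemize}
\item By (3)(b), $V$ is adjacent exactly to the $q^2$ lines outside $V$.
\item By (3)(c), two distinct lines are non-adjacent exactly when they lie in a common plane of $\mathcal P$, which here means both lie in $V$.
\end{itemize}
Hence the lines of $V$ form an independent set, the $q^2$ outer lines form a $K_{q^2}$, and every line of $V$ is adjacent to every outer line. This is the join described in item (i).

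Since there is only one plane, a triangle contains at most one plane vertex. I would count the following types.
\begin{itemize}
\item \textbf{$V$ and two lines.} Both lines must be outside $V$, and any two outer lines are adjacent. This gives $\binom{q^2}{2}$ triangles.
\item \textbf{Three lines, all outside $V$.} Any three outer lines are pairwise adjacent, giving $\binom{q^2}{3}$ triangles.
\item \textbf{Three lines, one in $V$ and two outside.} Each line of $V$ is adjacent to both outer lines, which are adjacent to each other. This gives $(q+1)\binom{q^2}{2}$ triangles.
\item \textbf{Three lines, at least two in $V$.} Two lines of $V$ are never adjacent, so there are none.
\end{itemize}

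Summing the contributions gives
\[
\mathfrak t(\Gamma(L))=\binom{q^2}{3}+(q+2)\binom{q^2}{2}=\frac{q^2(q^2-1)}{6}\bigl((q^2-2)+3(q+2)\bigr),
\]
which simplifies to the stated formula $\frac{q^2(q^2-1)(q^2+3q+4)}{6}$.

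There is no real obstacle here. The only point needing care is confirming from rule (3)(c) that a line inside $V$ is adjacent to every line outside $V$, since $V$ is the unique plane. Directly, such a pair spans a $2$-dimensional subspace that is not a subalgebra, so the pair generates $L$.
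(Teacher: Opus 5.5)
Your proposal is correct and follows the paper's proof essentially step for step. Like the paper, you use that $V$ is the unique plane, count $\binom{q^2}{2}$ triangles through $V$ and $\binom{q^2}{3}+(q+1)\binom{q^2}{2}$ triangles on three lines, and sum them. Your direct check that a line of $V$ and a line outside $V$ generate $L$ (because $\ad x$ has no eigenvector in $V$) is a small addition that the paper leaves to the cited proposition.
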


\begin{proof}
By \cite[Proposition 4.7]{TZG}, there is a unique plane $V=L'$; every line outside $V$ is adjacent to every other vertex; lines inside $V$ are adjacent only to lines outside $V$; no two lines inside $V$ are adjacent. In particular, there are no triangles involving two or three planes.

\medskip
\noindent\textbf{1. One plane and two lines.}
A line is adjacent to $V$ if and only if it is not contained in $V$. The $q^2$ lines outside $V$ each form a complete subgraph, so $T_{V2L} = \binom{q^2}{2} = \frac{q^2(q^2-1)}{2}$.

\smallskip
\noindent\textbf{2. Three lines.}
Two lines are non-adjacent if and only if both lie inside $V$, so three lines are
mutually adjacent if and only if at most one of them lies in $V$. 
\begin{enumerate}
\item[(i)] If all three lines are outside $V$ then $T_{3L}^{(i)} = \binom{q^2}{3} = \frac{q^2(q^2-1)(q^2-2)}{6}$.
    
\item[(ii)] If two lines are outside $V$, and one is inside $V$, then each of the $q+1$ lines inside $V$ is adjacent to all $q^2$ lines outside $V$, giving $T_{3L}^{(ii)} = (q+1)\binom{q^2}{2} = \frac{(q+1)q^2(q^2-1)}{2}$.
\end{enumerate}
Therefore, the number of triangles formed by three lines is
\[T_{3L} = \frac{q^2(q^2-1)(q^2-2)}{6} + \frac{(q+1)q^2(q^2-1)}{2}.\]
Finally, adding all contributions gives the stated formula.
\end{proof}

\begin{proposition}\label{2EigenVal}
Let $L$ be a three-dimensional Lie algebra over $\F_q$ with $\dim L'=2$ such that
$\ad x |_V$ has two distinct eigenvalues in $\F_q$ for some $x\notin L'$.
Then 
\[\mathfrak{t}(\Gamma(L)) = \frac{q(q+1)(q^4+2q^3+5q^2-6q-2)}{6}.\]
\end{proposition}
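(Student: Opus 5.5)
The formula is exactly the one in Proposition \ref{B2}, so I will not recount triangles. Instead I will show that $\Gamma(L)$ is isomorphic to the comaximal graph of the algebra of Case B2, and then quote Proposition \ref{B2}. The plan is to write down the subalgebras in Case 2 explicitly, match them with those of Case B2 so that containment of lines in planes is preserved, and then check that adjacency in both graphs is determined by that containment in the same way.

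\textbf{Step 1: list the vertices in Case 2.} Take the basis of Case 2, with $[x,v_1]=v_1$, $[x,v_2]=\mu v_2$ and $\mu\neq 1$.
\begin{enumerate}
\item[(a)] By Proposition \ref{dim3-derived2} there are $2q+1$ planes: $V=\langle v_1,v_2\rangle$, the $q$ planes $Q_b=\langle v_1,\,x+bv_2\rangle$, and the $q$ planes $P_\alpha=\langle v_2,\,x+\alpha v_1\rangle$.
\item[(b)] The $q^2+q+1$ lines split into four kinds: $\langle v_1\rangle$; $\langle v_2\rangle$; the $q-1$ lines $\langle \alpha v_1+v_2\rangle$ with $\alpha\in\F_q^*$; and the $q^2$ lines $X_{a,b}=\langle x+av_1+bv_2\rangle$.
\item[(c)] The incidences are as follows. $\langle v_1\rangle$ lies in $V$ and in every $Q_b$. $\langle v_2\rangle$ lies in $V$ and in every $P_\alpha$. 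Each line $\langle\alpha v_1+v_2\rangle$ with $\alpha\neq0$ lies only in $V$. The line $X_{a,b}$ lies exactly in $Q_b$ and $P_a$. To verify the last point: $x+av_1+bv_2\in Q_{b'}$ forces $b'=b$, and it lies in $P_{\alpha}$ only when $\alpha=a$.
\end{enumerate}

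\textbf{Step 2: match with Case B2.} In Case B2 the corresponding data are:
\begin{enumerate}
\item[(a)] $\langle y\rangle$ lies in $V$ and in all $Q_b$;
\item[(b)] $\langle z\rangle$ lies in $V$ and in all $P_\alpha$;
\item[(c)] each intermediate line $I_\alpha$ lies only in $V$;
\item[(d)] $X_{a,b}$ lies exactly in $Q_b$ and $P_a$.
\end{enumerate}
So the map sending $v_1\mapsto y$ and $v_2\mapsto z$ on the corresponding lines and planes, with the indices $a,b,\alpha$ kept, is a bijection of vertex sets that preserves line-in-plane incidence.

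\textbf{Step 3: compare adjacency, then conclude.} By Proposition \ref{dim3-derived2}(3), in Case 2 adjacency is determined by incidence alone:
\begin{enumerate}
\item[(a)] any two planes are adjacent;
\item[(b)] a line and a plane are adjacent if and only if the line is not contained in the plane;
\item[(c)] two lines are non-adjacent if and only if some plane contains both.
\end{enumerate}
The main check is that Case B2 obeys the same rules, reading them off the description before Proposition \ref{B2}.
\begin{enumerate}
\item[(a)] The planes of Case B2 are pairwise adjacent, and a line and a plane are adjacent if and only if the line is not contained in the plane; both facts are stated there.
\item[(b)] $\langle y\rangle$ and $\langle z\rangle$ are adjacent to no line. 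This matches rule (c), since each shares $V$ with the other and with every intermediate line, $\langle y\rangle$ shares $Q_b$ with $X_{a,b}$, and $\langle z\rangle$ shares $P_a$ with $X_{a,b}$.
\item[(c)] Intermediate lines are pairwise non-adjacent, as they all lie in $V$. Each intermediate line is adjacent to every $X$-line, and indeed they share no plane.
\item[(d)] $X_{a,b}\sim X_{c,d}$ if and only if $a\neq c$ and $b\neq d$. This is exactly the condition that they share neither a $P$-plane nor a $Q$-plane.
\end{enumerate}
Hence the bijection of Step 2 is a graph isomorphism, and Proposition \ref{B2} gives the stated value of $\mathfrak{t}(\Gamma(L))$.

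If one prefers not to rely on the isomorphism, the same decomposition into $T_{3P}$, $T_{2PL}$, $T_{P2L}$ and $T_{3L}$ used in the proof of Proposition \ref{B2} can be repeated verbatim with $v_1,v_2$ in place of $y,z$.
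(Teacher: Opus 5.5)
Your reduction is correct and is the paper's own argument. The paper simply asserts that $x\mapsto x$, $v_1\mapsto y$, $v_2\mapsto z$ induces an isomorphism of subalgebra lattices. You verify the same map explicitly: you match the $2q+1$ planes and the $q^2+q+1$ lines, check incidences, and use that in dimension three adjacency is determined by line--plane incidence. That part needs no change.

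The problem is the number you import, which makes the conclusion false. The closed form stated in Proposition~\ref{B2} is not the sum of the four contributions computed in its own proof. Adding $T_{3P}+T_{2PL}+T_{P2L}+T_{3L}$ as given there yields $\frac{q^6+3q^5+7q^4-q^3-2q^2-2q}{6}$. The displayed formula expands to $\frac{q^6+3q^5+7q^4-q^3-8q^2-2q}{6}$, so the true count exceeds it by exactly $q^2$.

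Here is a concrete check. For $q=3$ with eigenvalues $1,2$, which is the smallest field where Case 2 occurs:
\begin{itemize}
\item $T_{3P}=35$;
\item $T_{2PL}=126$;
\item $T_{P2L}=7\cdot 18=126$;
\item $T_{3L}=36+6=42$.
\end{itemize}
The total is $329$, whereas the stated formula gives $320$. Likewise, for the B2 algebra over $\F_2$, a direct enumeration of its $12$ vertices finds $42$ triangles, against the formula's $38$.

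So your final sentence, ``Proposition~\ref{B2} gives the stated value'', fails, and the statement as written cannot be proved. What your isomorphism actually establishes is $\mathfrak{t}(\Gamma(L))=\mathfrak{t}(\Gamma(L_{B2}))=\frac{q(q^5+3q^4+7q^3-q^2-2q-2)}{6}$. Your fallback of redoing the B2 decomposition with $v_1,v_2$ in place of $y,z$ would land on this corrected value as well, not on the stated one.
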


\begin{proof}
Let $L_B$ denote the algebra of Proposition \ref{B2} with basis $\{x, y, z\}$, and $[x,y]=y$, $[x,z]=[y,z]=0$. 
If $L$ has basis $\{x, v_1, v_2\}$ with $[x,v_1]=v_1$ and $[x,v_2]=\mu v_2$, then the mapping $x\mapsto x$, $v_1\mapsto y$ and $v_2\mapsto z$, and extending it linearly, gives a lattice isomorphism between the two subalgebra lattices. Then both graphs have the same number of triangles. 

 \end{proof}

\begin{proposition}\label{1EigenVal}
Let $L$ be a three-dimensional Lie algebra over $\F_q$ with $\dim L'=2$ such that
$\ad x |_V$ has a unique eigenvalue and is non-diagonalisable over $\F_q$
for some $x\notin L'$.  Then  
\[ \mathfrak{t}(\Gamma(L)) = \frac{q(q-1)(q+1)^4}{6}.\]
\end{proposition}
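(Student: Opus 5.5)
The plan is to reduce to Proposition \ref{Heisenberg}, in the same way that Proposition \ref{2EigenVal} was reduced to Proposition \ref{B2}. I will show that the comaximal graph in Case 3 is isomorphic to that of the Heisenberg algebra. The two triangle formulas agree, which supports this. Since adjacency in $\Gamma(L)$ depends only on the lattice of subalgebras (two vertices are adjacent iff their join in the lattice is $L$), a lattice isomorphism induces a graph isomorphism.

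First I fix the basis from Case 3: $[x,v_1]=\lambda v_1$ and $[x,v_2]=v_1+\lambda v_2$. By Proposition \ref{dim3-derived2}, every line is a subalgebra, and the planes are $V=\langle v_1,v_2\rangle$ together with the $q$ planes $\langle v_1, x+\alpha v_2\rangle$, $\alpha\in\F_q$.

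The key observation is that these $q+1$ planes are exactly all two-dimensional subspaces of $L$ containing $v_1$. Each of them contains $v_1$. Conversely, a plane through $v_1$ is either $V$ or $\langle v_1, x+\alpha v_2+\beta v_1\rangle=\langle v_1,x+\alpha v_2\rangle$, and there are exactly $q+1$ planes through a fixed line in $\F_q^3$.

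For the Heisenberg algebra with basis $\{e,f,h\}$, the subalgebras are likewise all lines together with all planes containing $h$. Indeed, any plane through $h$ is an abelian subalgebra. A plane not containing $h$ is spanned by $a,b$ with $[a,b]$ a nonzero multiple of $h$, so it is not closed. Hence the linear map $x\mapsto e$, $v_2\mapsto f$, $v_1\mapsto h$ sends the subalgebra lattice of $L$ bijectively onto that of the Heisenberg algebra, since it maps planes through $v_1$ to planes through $h$. Therefore $\Gamma(L)\cong\Gamma(\text{Heisenberg})$. Consistently, $\langle v_1\rangle$ corresponds to the isolated central line $\langle h\rangle$, as noted in Proposition \ref{dim3-derived2}(ii).

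The conclusion then follows from Proposition \ref{Heisenberg}. The only point needing care is verifying that the listed planes exhaust the planes through $v_1$, which is the short count above. As a sanity check, one could instead count directly using the same four vertex types as in the proof of Proposition \ref{Heisenberg}. The non-eigen lines split into $q+1$ classes of size $q$, one per plane, and adjacency is complete multipartite. This gives $\binom{q+1}{3}+\binom{q+1}{2}(q^2-q)+(q+1)\binom{q}{2}q^2+\binom{q+1}{3}q^3$, which sums to $\frac{q(q-1)(q+1)^4}{6}$.
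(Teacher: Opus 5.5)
Your proposal is correct and is essentially the paper's argument. The paper's proof simply asserts that $L$ has the same subalgebra lattice as the three-dimensional Heisenberg algebra; you make this explicit with the linear map $x\mapsto e$, $v_2\mapsto f$, $v_1\mapsto h$, which sends all lines to all lines and the $q+1$ planes through $v_1$ to the $q+1$ planes through $h$, so that Proposition \ref{Heisenberg} applies (and your direct sum $\frac{q(q-1)(q+1)}{6}(1+q)^3$ checks out).
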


\begin{proof}
In this case, $L$ has the same subalgebra lattice as the three-dimensional Heisenberg algebra

\smallskip
\end{proof}

\begin{proposition}\label{CaseC4}
Let $L$ be a three-dimensional Lie algebra over $\F_q$ with $\dim L'=2$ such that
$\ad x |_V$ is a nonzero scalar matrix for some $x\notin L'$. Then 
\[\mathfrak{t}(\Gamma(L)) = \frac{q(q+1)(4q^4+2q^3+q^2-3q-1)}{6}.\] 
\end{proposition}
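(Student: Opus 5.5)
The formula coincides with the one in Proposition \ref{Abelian}, so the plan is to show that in Case 4 the graph $\Gamma(L)$ is isomorphic to the comaximal graph of the three-dimensional abelian algebra. In that case every triangle count transfers at once, exactly as in the proofs of Propositions \ref{2EigenVal} and \ref{1EigenVal}. After rescaling $x$, take a basis $\{x,v_1,v_2\}$ with $[x,v_1]=v_1$ and $[x,v_2]=v_2$. Then $[x,v]=v$ for every $v\in V$, so every subspace of $L$ is closed under the bracket. Indeed, a two-dimensional subspace either equals $V$ (abelian) or meets $V$ in a line $\langle v\rangle$ and has the form $\langle x+w, v\rangle$ with $w\in V$, and $[x+w,v]=v$ lies in it. This agrees with Proposition \ref{dim3-derived2}, which gives $|\mathcal L|=|\mathcal P|=q^2+q+1$.

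Next, read off the adjacency from Proposition \ref{dim3-derived2}(3). Two distinct planes are always adjacent, since together they span $L$. A line $A$ and a plane $P$ are adjacent exactly when $A\not\subseteq P$. No two lines are adjacent, by item (iii), because any two lines span a plane, and that plane is a subalgebra. These are precisely the adjacency rules recorded in Case A for the abelian algebra, on vertex sets of identical sizes.

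The identity map on subspaces therefore gives a graph isomorphism between $\Gamma(L)$ and $\Gamma$ of the abelian algebra: both graphs have as vertices all lines and planes of $\F_q^3$, with the same incidence-defined edges. Hence $\mathfrak{t}(\Gamma(L))$ equals the value in Proposition \ref{Abelian}. As a sanity check, one could recount directly. Three planes give $\binom{q^2+q+1}{3}$ triangles. Two planes and a line give $\binom{q^2+q+1}{2}(q^2-q)$, because two planes share a line and cover $2q+1$ lines. Triangles with two or more lines contribute nothing.

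The only step needing real care is confirming that every two-dimensional subspace is a subalgebra. This is exactly what makes the lattice agree with that of the abelian algebra, and it is immediate from $\ad x|_V=\mathrm{id}$. Beyond that I expect no obstacle.
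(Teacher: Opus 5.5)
Your proposal is correct and follows the paper's route: the paper's proof just observes that $L$ is almost abelian and therefore has the same subalgebra lattice as the abelian algebra of Proposition~\ref{Abelian}, so the two graphs and their triangle counts coincide. Your verification that every plane $\langle x+w,v\rangle$ is a subalgebra, using $\ad x|_V=\mathrm{id}$ and that $V=L'$ is abelian, is exactly the content of that remark, written out explicitly.
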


\begin{proof}
Since the Lie algebra in Proposition \ref{Abelian} is abelian and, in this case, $L$ is almost abelian, so they have the same subalgebra lattice. 

\end{proof}

\textbf{Case D. $\dim L'=3$ - the perfect case.}

\begin{theorem} \cite[Theorem 4.9]{TZG}
Let $L$ be a perfect three-dimensional Lie algebra over a finite field $\F_q$ where $q\neq 2$. Then $L\cong \mathfrak{sl}_2(\F_q)$ (the split case) or $L\cong \mathfrak{su}_2(\F_q)$. In the latter case, $L$ has a basis $e_1,e_2,e_3$ with $[e_1,e_2]=e_3$, $[e_2,e_3]=e_1$ and $[e_3,e_1]=e_2$ (the non-split case).
\end{theorem}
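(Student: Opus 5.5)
The plan is to reduce the classification of perfect three-dimensional Lie algebras to the classification of nondegenerate symmetric bilinear forms on a three-dimensional space over $\F_q$, up to congruence and scaling.

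\textbf{Step 1: $L$ is simple.} Suppose $I$ is a proper nonzero ideal of $L$. Then $I$ has dimension $1$ or $2$, so $I$ is solvable. Likewise $L/I$ has dimension at most $2$, so it is solvable. Hence $L$ would be solvable, contradicting $L=L^2$. So $L$ is simple.

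\textbf{Step 2: encode the bracket as a $3\times 3$ matrix.} Fix a basis $e_1,e_2,e_3$ of $L$ and write
\[
[e_2,e_3]=\sum_j m_{1j}e_j,\qquad [e_3,e_1]=\sum_j m_{2j}e_j,\qquad [e_1,e_2]=\sum_j m_{3j}e_j,
\]
with $M=(m_{ij})$. Intrinsically, the bracket is a linear map $\Lambda^2L\to L$. Using $\Lambda^2 L\cong L^*\otimes\Lambda^3L$, it becomes a bilinear form on $L^*$, twisted by $\det$.

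Two facts should then follow from direct computation:
\begin{itemize}
\item[(a)] Expanding the Jacobi identity on $e_1,e_2,e_3$ forces $M$ to be symmetric. This is the classical statement that three-dimensional algebras with $L=L'$ are ``twisted cross products''.
\item[(b)] Perfectness says exactly that the image of $\Lambda^2L\to L$ is all of $L$, i.e. $M$ is nonsingular.
\end{itemize}

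Next I would check how $M$ transforms. A change of basis $e\mapsto Pe$ replaces $M$ by $(\det P)^{-1}P^{\mathsf T}MP$, up to the convention for the transpose. Hence $L\cong L'$ if and only if the associated forms are congruent up to a nonzero scalar.

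\textbf{Step 3: classify the forms over $\F_q$.}
\begin{itemize}
\item \emph{$q$ odd.} Every nondegenerate symmetric form diagonalises. Its congruence class is determined by the discriminant in $\F_q^*/(\F_q^*)^2$. Taking $M=I$ gives the basis $[e_1,e_2]=e_3$, $[e_2,e_3]=e_1$, $[e_3,e_1]=e_2$, which is the $\mathfrak{su}_2$ form. The split algebra $\mathfrak{sl}_2(\F_q)$ gives another realisation. Every nondegenerate form is congruent, up to scaling, to one of these two, so $L$ is isomorphic to one of them. Since scaling by $c$ multiplies the discriminant by $c^3$, the two may in fact coincide; this does not affect the statement, which only asserts the disjunction.
\item \emph{$q$ even, $q\ne 2$.} Here $[h,x]=2x=0$, so $\mathfrak{sl}_2$ is not perfect. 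A nondegenerate symmetric form in odd dimension cannot be alternating. A non-alternating symmetric form over the perfect field $\F_q$ is diagonalisable. Because every element of $\F_q$ is a square, it is congruent to $I$. This yields the $\mathfrak{su}_2$ basis.
\end{itemize}

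\textbf{Main obstacle.} I expect the hardest step to be the characteristic $2$ diagonalisation. I would handle it by taking $v$ with $B(v,v)\neq0$, splitting off $\langle v\rangle^\perp$, and inducting, while checking that the complement stays non-alternating or can be rearranged to be so. I would also keep track of where the hypothesis $q\neq2$ is used, presumably in the count of distinct two-dimensional subalgebras rather than in the form classification itself. If needed, I would verify the small case $q=2$ separately.
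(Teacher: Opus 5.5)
Your argument is correct. The paper does not prove this statement; it only quotes \cite[Theorem 4.9]{TZG}. Your proposal is therefore an independent, self-contained route: the classical reduction of three-dimensional perfect algebras to nondegenerate symmetric forms up to congruence and scaling. Compared with the bare citation it buys three things. First, it is uniform in the characteristic. Second, it never uses $q\neq 2$: your characteristic-$2$ argument works verbatim over $\F_2$, so that hypothesis is superfluous here. Third, pushed one line further, it proves strictly more than the disjunction. For $q$ odd, $\det(cM)=c^3\det M$, so scaling by $c=\det M$ makes the discriminant a square, and every nondegenerate symmetric $M$ is congruent up to scaling to $I$. Hence $L$ \emph{always} has the basis with $[e_1,e_2]=e_3$, $[e_2,e_3]=e_1$, $[e_3,e_1]=e_2$, and $\mathfrak{sl}_2(\F_q)\cong\mathfrak{su}_2(\F_q)$. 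Your ``may in fact coincide'' should read ``always coincide''. This agrees with the fact that $x^2+y^2+z^2$ is isotropic over every finite field. It is worth stating explicitly, because the paper later treats $\mathfrak{su}_2(\F_q)$ as an algebra all of whose proper subalgebras are one-dimensional, and your computation rules that out. For even $q$ only the second alternative occurs, since $\mathfrak{sl}_2$ is not perfect there.

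A few points to tighten.
\begin{enumerate}
\item Claim (a) is false as stated. On $e_1,e_2,e_3$ the Jacobi identity reads exactly $a^{\mathsf T}M=0$, with $a=(m_{23}-m_{32},\,m_{31}-m_{13},\,m_{12}-m_{21})$. For instance, $[x,y]=y$ with $z$ central is a Lie algebra with non-symmetric $M$. Symmetry follows only once (b) supplies invertibility of $M$, so state it that way.
\item In the odd case, ``congruent up to scaling to one of these two'' is not justified by the two representatives covering both square classes. With basis $h,x,y$, $\mathfrak{sl}_2$ has $\det M=-4$, which is a square when $q\equiv 1\pmod 4$. Use the scaling argument above instead.
\item Step 1 (simplicity) is never used.
\item The characteristic-$2$ rearrangement you anticipate can be made explicit. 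After normalising $B(v,v)=1$, suppose $\langle v\rangle^\perp$ is a hyperbolic plane with $B(u,u)=B(w,w)=0$ and $B(u,w)=1$. Then $v+u,\ v+w,\ v+u+w$ is an orthonormal basis.
\end{enumerate}
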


From now on, we assume $q$ is odd. Since every proper subalgebra of $\mathfrak{su}_2(\F_q)$ is one-dimensional, the graph is complete. Then the interesting case is the split one. Throughout, we assume that $L=\mathfrak{sl}_2(\F_q)$.  We freely use the structural results of \cite{TZG}: the partition
\[V(\Gamma(L)) = \mathcal{B}\sqcup \mathcal{N}\sqcup L_s \sqcup \mathcal{L}_{ns}\]
of \cite[Proposition 4.10]{TZG}, where $\mathcal{B}$ is the set of $q+1$ Borel subalgebras, $\mathcal{N}$ is the set of $q+1$ nilpotent lines, $\mathcal{L}_s$ is the set of $\tfrac{q(q+1)}{2}$ split semisimple lines, and $\mathcal{L}_{ns}$ is the set of $\frac{q(q-1)}{2}$ non-split semisimple lines; the adjacency rules of \cite[Proposition 4.11]{TZG} (two Borels are always adjacent; a line is adjacent to a Borel if and only if it is not contained in it; two lines are adjacent if and only if no Borel contains both); the containment facts of \cite[Lemma 4.14]{TZG} (each Borel contains exactly one nilpotent line and $q$ split lines; each nilpotent line lies in exactly one Borel; each split line lies in exactly two Borels; non-split lines lie in no Borel).  

\begin{lemma}\label{bijection}
There is a bijection between $N\sqcup \mathcal{L}_s$ and the set of subsets
of $\mathcal{B}$ of size 1 or 2: a nilpotent line corresponds to the unique Borel containing it, and a split line corresponds to the (unordered) pair of Borels containing it.  Under this bijection:
\begin{enumerate}
  \item two lines in $\mathcal{N}\sqcup \mathcal{L}_s$ are adjacent in $\Gamma(L)$ if and only if their corresponding subsets of $\mathcal{B}$ are disjoint;
  \item a line in $\mathcal{N}\sqcup \mathcal{L}_s$ is adjacent to a Borel $B$ if and only if its corresponding subset does not contain $B$.
\end{enumerate}
In particular, two distinct Borels share exactly one split line, and
$|N\sqcup L_s| =(q+1) +  \tfrac{q(q+1)}{2} =\binom{q+2}{2}$.
\end{lemma}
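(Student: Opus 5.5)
We define the map $\phi$ on $\mathcal{N}\sqcup\mathcal{L}_s$ by sending a line $\ell$ to $\phi(\ell)=\{B\in\mathcal{B}\mid \ell\subseteq B\}$. The containment facts of \cite[Lemma 4.14]{TZG} show that $|\phi(\ell)|=1$ for $\ell\in\mathcal{N}$ and $|\phi(\ell)|=2$ for $\ell\in\mathcal{L}_s$. So $\phi$ lands in the family of $1$- and $2$-subsets of $\mathcal{B}$.

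We first prove bijectivity by counting plus injectivity. The target has $(q+1)+\binom{q+1}{2}$ elements, which equals $|\mathcal{N}|+|\mathcal{L}_s|$ by the counts of \cite[Proposition 4.10]{TZG}.

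\emph{Injectivity.} Two distinct nilpotent lines with the same Borel $B$ cannot occur, since $B$ contains exactly one nilpotent line. A nilpotent line and a split line never have the same image, because the images have different sizes. It remains to treat two distinct split lines $\ell_1,\ell_2$ lying in the same two Borels $B_1\neq B_2$. Then $\ell_1+\ell_2$ is a $2$-dimensional subspace contained in $B_1\cap B_2$. Since $B_1\neq B_2$ are both $2$-dimensional, this would force $B_1=B_2$, a contradiction. Hence $\phi$ is injective, and therefore bijective.

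\emph{Consequence for pairs of Borels.} Surjectivity onto the $2$-subsets shows that any two distinct Borels share at least one split line. They share exactly one, since $B_1\cap B_2$ is at most $1$-dimensional. The cardinality statement is just $(q+1)+\binom{q+1}{2}=\binom{q+2}{2}$.

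\emph{Part (1).} By \cite[Proposition 4.11]{TZG}, two lines are adjacent if and only if no Borel contains both. This says exactly that $\phi(\ell_1)\cap\phi(\ell_2)=\emptyset$.

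\emph{Part (2).} A line $\ell$ is adjacent to $B$ if and only if $\ell\not\subseteq B$, which is equivalent to $B\notin\phi(\ell)$.

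The only step with real content is injectivity on split lines, which needs the intersection-dimension argument above. Everything else is bookkeeping with the cited containment and adjacency facts.
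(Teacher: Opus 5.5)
Your proof is correct and takes the same approach as the paper: define the map from the containment facts of \cite[Lemma 4.14]{TZG}, get bijectivity from injectivity together with the count $|\mathcal{L}_s|=\binom{q+1}{2}$, and translate the adjacency rules of \cite[Proposition 4.11]{TZG} into disjointness and non-membership. The one difference is that you justify injectivity on split lines (two distinct split lines in $B_1\cap B_2$ would force $B_1=B_2$), whereas the paper only asserts that each split line gives a distinct pair of Borels.
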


\begin{proof}
It follows immediately from \cite[Lemma 4.14]{TZG} and the adjacency rules of \cite[Proposition 4.11]{TZG}. Further, since each split line corresponds to a distinct pair of Borels and $|\mathcal{L}_s|= \tfrac{q(q+1)}{2}$, every pair of Borels arises from exactly one split line, proving the claim.
\end{proof}

Non-split lines lie in no Borel, hence are adjacent to every Borel and (since no Borel can contain a pair including a non-split line) to every other line as well; in particular, $L_{ns}\sqcup\mathcal{B}$ is a clique, recovering \cite[Corollary 4.16 (3)]{TZG}.

\begin{example}
We illustrate for $q=3$ the bijection of Lemma \ref{bijection} using the information in \cite[Example 4.13]{TZG} (Tables 1 and 2 there), combined into
a single table.  Recall that the four Borel subalgebras for $q=3$ are $B=\F_3 x + \F_3 h$, $B(0)=\F_3 h+\F_3 y$, $B(1)=\F_3(h+x)+\F_3(y+x)$, and $B(2) =\F_3(h+2x) +\F_3(y+x)$.
\smallskip

\begin{table}[H]
\centering
\caption{Lines of $\mathfrak{sl}_2(\F_3)$, their type, and (for $\mathcal{N}\sqcup \mathcal{L}_s$) the subset of $\mathcal{B}$ assigned by the bijection of Lemma \ref{bijection}.}
\smallskip
\begin{tabular}{l|c|c|c|c}
\hline
Line & Generator & $\Delta$ & Type & Subset of $\mathcal{B}$ \\
\hline
$L_1$    & $h$        & $1$ & Split     & $\{B,\,B(0)\}$ \\
$L_2$    & $x$        & $0$ & Nilpotent & $\{B\}$ \\
$L_3$    & $y$        & $0$ & Nilpotent & $\{B(0)\}$ \\
$L_4$    & $h+x$      & $1$ & Split     & $\{B,\,B(1)\}$ \\
$L_5$    & $h+y$      & $1$ & Split     & $\{B(0),\,B(2)\}$ \\
$L_6$    & $x+y$      & $1$ & Split     & $\{B(1),\,B(2)\}$ \\
$L_7$    & $h+2x$     & $1$ & Split     & $\{B,\,B(2)\}$ \\
$L_8$    & $h+2y$     & $1$ & Split     & $\{B(0),\,B(1)\}$ \\
$L_9$    & $x+2y$     & $2$ & Nonsplit  & -- \\
$L_{10}$ & $x+y+h$    & $2$ & Nonsplit  & -- \\
$L_{11}$ & $2x+y+h$   & $0$ & Nilpotent & $\{B(1)\}$ \\
$L_{12}$ & $x+2y+h$   & $0$ & Nilpotent & $\{B(2)\}$ \\
$L_{13}$ & $x+y+2h$   & $2$ & Nonsplit  & -- \\
\hline
\end{tabular}
\end{table}

The last column is read off directly from \cite[Tables 1 and 2]{TZG}: each nilpotent or split line is assigned the set of Borels of Table 2 that contain it (nonsplit lines lie in no Borel, so the bijection does not apply to them, marked --).  As stated in Lemma \ref{bijection}, the four nilpotent lines correspond exactly to the four singletons of $\mathcal{B}$, and the six split lines correspond exactly to the six pairs.
\end{example}

\begin{proposition}\label{CaseD}
Let $L=\mathfrak{sl}_2(\F_q)$ with $q$ odd.  Then
\[\mathfrak{t}(\Gamma(L)) = \frac{q^2(q^2-1)(q^2+3q+3)}{6}.\]
\end{proposition}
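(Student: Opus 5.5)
The plan is to reduce the count to pure combinatorics on the set $\mathcal{B}$ of Borel subalgebras, using Lemma \ref{bijection} and the remark after it that non-split lines are adjacent to every other vertex. Write $n=q+1=|\mathcal{B}|$ and $u=|\mathcal{L}_{ns}|=\tfrac{q(q-1)}{2}$. Let $G_0=\Gamma(L)[\mathcal{B}\sqcup\mathcal{N}\sqcup\mathcal{L}_s]$. Since $\mathcal{L}_{ns}$ is a clique joined to everything, I will split triangles by how many of their vertices lie in $\mathcal{L}_{ns}$:
\[
\mathfrak{t}(\Gamma(L))=\mathfrak{t}(G_0)+u\,|E(G_0)|+\binom{u}{2}|V(G_0)|+\binom{u}{3}.
\]
It therefore suffices to compute $|V(G_0)|$, $|E(G_0)|$ and $\mathfrak{t}(G_0)$.

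\textbf{Labelling $G_0$.} Label each Borel $B$ by the singleton $\{B\}\subseteq\mathcal{B}$. Label each line of $\mathcal{N}\sqcup\mathcal{L}_s$ by its subset from Lemma \ref{bijection}. With these labels, every adjacency rule in $G_0$ becomes \emph{adjacent if and only if the labels are disjoint}:
\begin{itemize}
\item two distinct Borels have disjoint singleton labels, and they are always adjacent;
\item a line is adjacent to a Borel $B$ exactly when its label avoids $B$, by Lemma \ref{bijection}(2);
\item two lines are adjacent exactly when their labels are disjoint, by Lemma \ref{bijection}(1).
\end{itemize}
So $G_0$ is the ``disjointness graph'' on the $1$- and $2$-subsets of an $n$-set, with each singleton occurring twice: once as a Borel and once as a nilpotent line. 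Two copies of the same singleton are not adjacent, which is consistent because a Borel does not contain... rather, does contain its own nilpotent line. Hence $|V(G_0)|=2n+\binom{n}{2}$.

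\textbf{Edges of $G_0$.} Classify an edge by the sizes of its two labels.
\begin{itemize}
\item Singleton--singleton: $4\binom{n}{2}$, since there are two copies of each singleton.
\item Singleton--pair: $2n\binom{n-1}{2}$.
\item Pair--pair: $\tfrac12\binom{n}{2}\binom{n-2}{2}$.
\end{itemize}

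\textbf{Triangles of $G_0$.} A triangle is the same as three pairwise disjoint labels, counted with copy multiplicities.
\begin{itemize}
\item Three singletons: $8\binom{n}{3}$.
\item Two singletons and one pair: $4\binom{n}{2}\binom{n-2}{2}$.
\item One singleton and two pairs: $2n\cdot\tfrac12\binom{n-1}{2}\binom{n-3}{2}$.
\item Three pairs: $\tfrac16\binom{n}{2}\binom{n-2}{2}\binom{n-4}{2}$.
\end{itemize}

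\textbf{Assembling the count.} Finally, substitute $n=q+1$ and $u=q(q-1)/2$ into the displayed decomposition, expand, and factor to obtain $\tfrac{q^2(q^2-1)(q^2+3q+3)}{6}$.

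The conceptual step is the disjointness reformulation, which is essentially given by Lemma \ref{bijection}. I expect the main obstacle to be the polynomial bookkeeping in the last step, a degree-$6$ expansion in $q$. To control it I would check the result at $q=3$ before factoring. There $n=4$ and $u=3$, so $|V(G_0)|=14$, $|E(G_0)|=24+24+3=51$ and $\mathfrak{t}(G_0)=32+24+0+0=56$. This gives $56+153+42+1=252$, which agrees with $\tfrac{9\cdot 8\cdot 21}{6}=252$. I would also check the small-$q$ degenerate binomials (for instance $\binom{n-4}{2}$) when confirming the general identity.
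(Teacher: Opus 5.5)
Your argument is correct, and it reaches the formula by a genuinely different decomposition from the paper's.

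\textbf{How the two routes differ.} The paper splits triangles by the number of Borel vertices (three, two, one, none). It computes $T_{3\mathcal{B}}$, $T_{2\mathcal{B}L}$ and the per-Borel edge count $e_B=\tfrac{q^3(q-1)}{2}$. Only for triangles made entirely of lines does it split further by the number of non-split lines, using $\alpha(q)$ and $\beta(q)$, the numbers of disjoint pairs and triples of $1$- and $2$-subsets of $\mathcal{B}$.

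You split only by the number of non-split lines, using that $\Gamma(L)$ is the join of the clique $\mathcal{L}_{ns}$ with $G_0$. You also absorb the Borels into the labelling. A Borel $B$ and the nilpotent line it contains are non-adjacent and have identical neighbourhoods, so they are just two copies of the label $\{B\}$.

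\textbf{What each route buys.} Your doubling folds the paper's three Borel-containing cases into factors of $2$ on singletons. The whole computation becomes one count of disjoint label families, and you get $|V(G_0)|$ and $|E(G_0)|$ as by-products. The paper's route keeps the vertex-type bookkeeping used in the rest of Section 3 and records local data such as $e_B$. The cost is more separate sums. Indeed, the paper's displayed value $T_{ns2L}=\tfrac{q(q-1)}{2}\alpha(q)$ should have denominator $16$, not $8$, although its final total is correct.

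\textbf{The step you leave undone.} You do not carry out the final algebra, and a check at $q=3$ alone does not establish a polynomial identity. The identity does hold. Your counts give
$|V(G_0)|=\tfrac{(q+1)(q+4)}{2}$,
$|E(G_0)|=\tfrac{q(q+1)(q^2+5q+10)}{8}$,
$\mathfrak{t}(G_0)=\tfrac{q(q^2-1)(q^3+3q^2+14q+16)}{48}$,
$\binom{u}{2}=\tfrac{q(q^2-1)(q-2)}{8}$ and
$\binom{u}{3}=\tfrac{q(q^2-1)(q-2)(q^2-q-4)}{48}$.

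Factor $\tfrac{q(q^2-1)}{48}$ out of your four terms. They then contribute
$q^3+3q^2+14q+16$,
$3q(q^2+5q+10)$,
$3(q-2)(q+1)(q+4)$ and
$(q-2)(q^2-q-4)$.
These sum to $8q(q^2+3q+3)$, which gives $\tfrac{q^2(q^2-1)(q^2+3q+3)}{6}$.

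Since $q\ge 3$, every binomial argument is nonnegative (for instance $n-4=q-3$). So the polynomial and combinatorial values of the binomials agree, and the small-$q$ degenerate cases you were worried about need no separate treatment.
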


\begin{proof}
We count triangles according to the number of Borel subalgebras
among their three vertices.

\smallskip
\noindent\textbf{1. Three Borels.} Every pair of Borels is adjacent, so
\[  T_{3\mathcal{B}} =\binom{q+1}{3}=\frac{q(q^2-1)}{6}.\]

\noindent\textbf{2. Two Borels and one line.} Fix two distinct Borels $B_1,B_2$, and let $A$ be any line.  
\begin{enumerate}
\item[(i)] If $A\in \mathcal{L}_{ns}$, it is automatically adjacent to both. 

\item[(ii)] If $A\in \mathcal{N}\sqcup \mathcal{L}_s$, by Lemma \ref{bijection} (2), it is adjacent to both if and only if its corresponding subset of $\mathcal{B}$ avoids $\{B_1, B_2\}$, that is, it is a subset of size 1 or 2 of the remaining $q-1$ Borels: $(q-1)+ \tbinom{q-1}{2} =\tbinom{q}{2}$ choices.
Summing over both types and then over all $\tbinom{q+1}{2}$ pairs of
Borels:
\[T_{2\mathcal{B}L} = \binom{q+1}{2} \left(\binom{q}{2}+ \frac{q(q-1)}{2}\right) =\frac{q^2(q^2-1)}{2}.\]
\end{enumerate}

\noindent\textbf{3. One Borel and two lines.}
Fix $B\in\mathcal{B}$.  We compute $e_{B}$, the number of adjacent pairs of lines that are both adjacent to $B$, splitting by type.
\begin{enumerate}
\item[(i)] Both lines in $\mathcal{N}\sqcup \mathcal{L}_s$. By Lemma \ref{bijection}, such lines correspond to subsets of size 1 or 2 of the $q$-set $\mathcal{B}\setminus\{B\}$, and two are mutually adjacent (and automatically adjacent to $B$) if and only if their subsets are disjoint. Then 
\[e_B^{(i)}=\underbrace{\binom{q}{2}}_{(\mathcal{N},\mathcal{N})} + \underbrace{q\binom{q-1}{2}}_{(\mathcal{N},\mathcal{L}_s)} +\underbrace{\frac{1}{2}\binom{q}{2}\binom{q-2}{2}}_{(\mathcal{L}_s, \mathcal{L}_s)}  =\frac{q(q-1)(q^2-q+2)}{8}.\]

\item[(ii)] One line in $\mathcal{N}\sqcup \mathcal{L}_s$, and one in $\mathcal{L}_{ns}$. The line in $\mathcal{N}\sqcup \mathcal{L}_s$ adjacent to $B$ corresponds to a subset of size 1 or 2 of $\mathcal{B}\setminus\{B\}$. Then, there are $q + \binom{q}{2} = \binom{q+1}{2}$ such lines. Every non-split line is adjacent to it automatically, so
\[e_B^{(ii)} = \binom{q+1}{2} \frac{q(q-1)}{2} =\frac{q^2(q^2-1)}{4}.\]
\item[(iii)] Both lines in $\mathcal{L}_{ns}$. Always adjacent, therefore, $e_B^{(iii)} =\binom{\frac{q(q-1)}{2}}{2} = \frac{q(q-2)(q^2-1)}{8}$.
\end{enumerate}

Summing the three contributions we have $e_B = \frac{q^{3}(q-1)}{2}$. Over all $q+1$ Borels we have the total number:
\[T_{\mathcal{B}2L}=(q+1) \frac{q^{3}(q-1)}{2} = \frac{q^{3}(q^2-1)}{2}.\]

\noindent\textbf{4. Three lines.}
We decompose by the number of non-split lines among the three.
\begin{enumerate}
\item[(i)] Three non-split. $T_{3ns}=\binom{\frac{q(q-1)}{2}}{3}$.

\item[(ii)] Two non-split, and one in $\mathcal{N}\sqcup \mathcal{L}_s$. By Lemma \ref{bijection}, the third vertex can be any of the $\binom{q+2}{2}$ lines of $\mathcal{N}\sqcup \mathcal{L}_s$. Then 
\[T_{2nsL} = \binom{\tfrac{q(q-1)}{2}}{2} \frac{(q+1)(q+2)}{2}.\]

\item[(iii)] One non-split, two in $\mathcal{N}\sqcup \mathcal{L}_s$.
The non-split line is automatically adjacent to both; the other two
must be mutually adjacent. If we define
\[\alpha(q) := |\{\{A_1,A_2\}\subseteq \mathcal{N}\sqcup \mathcal{L}_s \mid A_1\sim A_2\}|,\]
then by Lemma \ref{bijection} counts disjoint pairs of subsets of size 1 or 2 from $\mathcal{B}$. Thus, 
\begin{align*}
\alpha(q) & = \underbrace{\binom{q+1}{2}}_{(\mathcal{N},\mathcal{N})}   + \underbrace{(q+1)\binom{q}{2}}_{(\mathcal{N},\mathcal{L}_s)} + \underbrace{\tfrac{1}{2} \binom{q+1}{2} \binom{q-1}{2}}_{(\mathcal{L}_s, \mathcal{L}_s)}\\ 
& =\frac{q(q+1)(q^2+q+2)}{8}.
\end{align*}
Hence,
\[T_{ns2L} = \frac{q(q-1)}{2} \alpha(q) = \frac{q^2(q^2-1)(q^2+q+2)}{8}.\]

\item[(iii)] No non-split lines: three lines in $N\sqcup L_s$.
Define
\[\beta(q) := |\{\{A_1,A_2,A_{3}\}\subseteq \mathcal{N}\sqcup \mathcal{L}_s\mid A_1 \sim A_2 \sim A_3 \sim A_1\}|,\]
the number of mutually adjacent triples within $\mathcal{N}\sqcup \mathcal{L}_s$.  By Lemma \ref{bijection}, this is the number of ways to choose three pairwise-disjoint subsets of size 1 or 2 from $\mathcal{B}$.  Decomposing by the sizes of the three subsets:
\begin{align*}
\beta(q) & = \binom{q+1}{3} + \binom{q+1}{2}\binom{q-1}{2} + \frac{1}{2} (q+1) \binom{q}{2} \binom{q-2}{2} \\
& \hskip0.5cm + \frac{1}{6} \binom{q+1}{2} \binom{q-1}{2} \binom{q-3}{2}\\
& = \frac{q(q^2-1)}{6} + \frac{q(q-2)(q^2-1)}{4} + \frac{q(q-3)(q-2)(q^2-1)}{8}\\ & \hskip0.5cm +\frac{q(q-4)(q-3)(q-2)(q^2-1)}{48}.
\end{align*}
Combining the four sub-cases:
\[T_{3L} = \frac{q(q-1)^2(q+1)(q^2+q+1)}{6}.\]
\end{enumerate}
Adding all contributions gives the stated formula.
\end{proof}

\section{The comaximal graphs for certain 4-dimensional Lie algebras over finite fields}

\begin{lemma}\label{g-adjacency}
Let $L$ be a four-dimensional Lie algebra over a field $\mathbb{F}$,  and let $\mathcal{L}$ denote the set of lines (one-dimensional Lie subalgebras), $\mathcal{P_2}$ the set of 2-planes (two-dimensional Lie subalgebras) of $L$ and $\mathcal{P_3}$ the set of hyperplanes (three-dimensional Lie subalgebras) of $L$.
\begin{enumerate}[(1)]
\item If $A, B \in \mathcal{P}_3$ with $A \neq B$, then $A \sim B$ in $\Gamma(L)$. In particular, the induced subgraph 
$\Gamma(L)[\mathcal{P}_3]$ is always a complete graph.
\item If $A, B \in \mathcal{P}_2$ with $A\cap B=0$, then $A \sim B$ in $\Gamma(L)$.
\item If $A \in \mathcal{L}$ and $B \in \mathcal{P}_3$, then $A \sim B$ in $\Gamma(L)$ if and only if $A \cap B=0$.
\item If $A, B\in \mathcal{P}_2$, the set of two-dimensional subalgebras, then $A\sim B$ if and only if they don't belong to the same maximal subalgebra.
\item If $A\in \mathcal{L}$ and $B\in \mathcal{P}_2$ then $A\sim B$ if and only if they don't belong to the same maximal subalgebra.
\end{enumerate}

\begin{proof}
\begin{enumerate}[(1)]
\item Since $A$ and $B$ are distinct three-dimensional subspaces of the four-dimensional space $L$, we have $\dim(A + B) = 4$. Hence $\langle A, B \rangle = A + B = L$, and therefore $A \sim B$.
 \item If $A$ and $B$ are two-dimensional subspaces of the four-dimensional space $L$ with $A\cap B=0$ we have $\dim(A + B) = 4$. Hence $\langle A, B \rangle \supseteq A + B = L$, and therefore $A \sim B$. 
\item Let $A\in\mathcal{L}$ and $B\in\mathcal{P}_3$. If $A\subseteq B$, then $\langle A,B\rangle =B\neq L$, so $A$ and $B$ are not adjacent. Conversely, if $A\not\subseteq B$, then $\langle A,B\rangle = L$, and $A$ and $B$ are adjacent.
\item Let $A,B\in \mathcal{P}_2$. Then $\langle A,B\rangle=L$ precisely when it is not inside a maximal subalgebra.
\item This follows as in (4).
\end{enumerate}
\end{proof}
\end{lemma}

\subsection{Comaximal graph of a 4-dimensional nilpotent Lie algebra}
There are three types if 4-dimensional nilpotent algebra over any field of characteristic different from two.
\smallskip

\textbf{Case 1. $\dim L'=0$ - The abelian algebra.} In this case, every subspace of $L$ is a Lie subalgebra. Hence, the proper nonzero Lie subalgebras of $L$ are all the 1-dimensional, 2-dimensional and 3-dimensional vector subspaces of the vector space $L$. The number of lines and hyperplanes in $L$ are $q^3+q^2+q+1$ and the number of 2-planes is $(q^2+1)(q^2+q+1)$.  

\begin{proposition}\label{ab}
The comaximal graph $\Gamma(L)$ has vertex set $\mathcal{V} = \mathcal{L} \sqcup \mathcal{P}_2 \sqcup \mathcal{P}_3$, and adjacency is described as follows:
\begin{enumerate}
\item If $A, B\in \mathcal{P}_3$ with $A\neq B$, then they are adjacent. Thus, the induced subgraph on $\mathcal{P}_3$ is the complete graph $K_{q^3+q^2+q+1}$.
\item If $A, B\in \mathcal{P}_2$ then they are adjacent if and only if $A\cap B=0$.
\item If $A, B\in \mathcal{L}$, then they are never adjacent. Thus, the induced subgraph on $\mathcal{L}$ is an independent set.
\item If $A\in \mathcal{L}$ and $B\in \mathcal{P}_2$, then they are never adjacent. Thus, the induced subgraph on $\mathcal{L}$ is an independent set.
\item For $A\in \mathcal{L}$ and $B\in \mathcal{P}_3$ one has $A\sim B$ if and only if $A\cap B=0$.
\end{enumerate}
\end{proposition}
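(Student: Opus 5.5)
Since $L$ is abelian, every subspace is a subalgebra, so for any two subalgebras $A,B$ we have $\langle A,B\rangle = A+B$. Adjacency in $\Gamma(L)$ is therefore the purely linear-algebraic condition $A+B=L$, equivalently $\dim A+\dim B-\dim(A\cap B)=4$. This explains the vertex set at once: it is the set of all $1$-, $2$- and $3$-dimensional subspaces, which we denote $\mathcal{L}\sqcup\mathcal{P}_2\sqcup\mathcal{P}_3$ as in Lemma \ref{g-adjacency}. We then go through the pairs of dimension types one at a time and apply the dimension formula.

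Items (1) and (5) are exactly Lemma \ref{g-adjacency}(1) and (3); we invoke them directly. For (1) we also note that the number of hyperplanes equals the number of lines, $(q^4-1)/(q-1)=q^3+q^2+q+1$, by duality. This gives the complete graph $K_{q^3+q^2+q+1}$.

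For (2), if $A,B\in\mathcal{P}_2$, then $\dim(A+B)=4-\dim(A\cap B)$. Hence $A+B=L$ if and only if $A\cap B=0$. The forward direction is Lemma \ref{g-adjacency}(2), and the converse is the dimension count. For (3), two lines span a subspace of dimension at most $2<4$, so they are never adjacent. For (4), a line and a $2$-plane span a subspace of dimension at most $3<4$, so they are never adjacent either. (The second sentence of (4) in the statement should presumably say that there are no edges between $\mathcal{L}$ and $\mathcal{P}_2$; we will prove it in that form.)

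We do not expect a real obstacle. The only point needing care is the identity $\langle A,B\rangle=A+B$. In an abelian algebra this holds because $[A,B]=0$, so the subspace $A+B$ is already closed under the bracket, which is what makes all the dimension counts exact rather than mere inequalities.
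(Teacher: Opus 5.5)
Your proposal is correct and follows essentially the same route as the paper: you reduce adjacency to $A+B=L$ via $\langle A,B\rangle=A+B$ in the abelian algebra, cite Lemma \ref{g-adjacency}(1) and (3) for items (1) and (5), and settle (2)--(4) by dimension counts. Your bound $\dim(A+B)\le 3$ in (4) is slightly more careful than the paper's $\dim(A+B)=3$, which overlooks the case where the line lies inside the $2$-plane.
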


\begin{proof}
Clearly the vertex set $\mathcal{V}$ of $\Gamma(L)$ is $\mathcal{V} = \mathcal{L} \sqcup \mathcal{P}_2 \sqcup \mathcal{P}_3$.

\smallskip

\noindent (1) By Lemma \ref{g-adjacency} (1), the induced subgraph on $\mathcal{P}_3$ is complete.


\smallskip

\noindent (2) Let $A, b\in \mathcal{P}_2$. Then they are adjacent if and only if $L=\langle a,B\rangle = A+B$, and this holds if and only if $A\cap B=0$.

\smallskip

\noindent (3) Let $A, B\in \mathcal{L}$ with $A\neq B$. Then $\dim(A+B)=2$. Since $L$ is abelian, $\langle A,B\rangle =A+B$, so $\langle A,B\rangle \neq L$. Hence, $A$ and $B$ are not adjacent. Since this holds for every pair of distinct lines, the induced subgraph on $\mathcal{L}$ is an independent set.

\smallskip

\noindent (4) Let $A\in \mathcal{L}$ and $B\in \mathcal{P}_2$. Then $\langle A,B\rangle =A+B\neq L$ since $\dim (A+B)=3$.

\smallskip

\noindent (5) This follows immediately from Lemma \ref{g-adjacency} (3).


\smallskip

\end{proof}

 \textbf{Case 2. $\dim L'=1$ - The Heisenberg algebra plus a central line.}
 Here we have $L=H_3(\F)\oplus \F e_4$ with basis $e_1,e_2,e_3,e_4$ and $[e_1,e_2]=e_3$. Put $V=\F e_1+\F e_2$, $Z=\F e_3+\F e_4$. Let $U$ be a subalgebra of dimension greater than or equal to two. If $e_3\notin U$ then the projection of $U$ onto $V$ can have dimension at most one, so $U$ is abelian. 
\begin{proposition}\label{heis+}  The proper nonzero subalgebras of $L$ are as given below.
\begin{enumerate}
 \item The three-dimensional subalgebras are of two types, as: $\mathcal{P}_{3,1}(\lambda,\mu)= \F(e_1+\lambda e_4)+\F(e_2+\mu e_4)+\F e_3$ (these are Heisenberg subalgebras) and $\mathcal{P}_{3,2}(\lambda.\mu)=Z+\F(\lambda e_1+\mu e_2)$ (these are abelian subalgebras). If $\F=\F_q$, there are $q^2$ of the former and $q+1$ of the latter, making $q^2+q+1$ in total 
 \item There are no two-dimensional maximal subalgebras. The subalgebras in $\mathcal{P}_{3,1}(\lambda,\mu)$ which are two-dimensional must contain $e_3$ and are therefore of the form $\F e_3+\F(\alpha e_1+\beta e_2+\gamma e_4)$. There are $q^2+q+1$ of these and they all belong to $\mathcal{P}_{3,2}(\lambda,\mu)$ also. As $\mathcal{P}_{3,2}(\lambda,\mu)$ is abelian, every two-dimensional subspace of it is a subalgebra. If $\F=\F_q$, there are $q^3+2q^2+q+1$ subalgebras of this type.
 \item  If $A, B \in \mathcal{P}_3$ with $A \neq B$, then $A \sim B$. If $\F=\F_q$, $\Gamma(L)[\mathcal{P}_3]\cong K_{q^2q+1}$; that is, the induced subgraph on $\mathcal{P}$ is the complete graph $K_{q^2+q+1}$.
\item If $A,B\in \mathcal{P}_2$, the set of two-dimensional subalgebras, then $A\sim B$ if and only if they don't belong to the same plane in $\mathcal{P}_3$.
\item If $A\in \mathcal{L}$ and $B\in \mathcal{P}_2$ then $A\sim B$ if and only if they don't belong to the same plane in $\mathcal{P}_3$.
 \end{enumerate}
\end{proposition}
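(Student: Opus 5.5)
The plan is to classify subalgebras by whether they contain $e_3$, using the observation made just before the statement: a subalgebra $U$ of dimension at least two with $e_3\notin U$ is abelian, because the projection of $U$ onto $V$ is at most one-dimensional.

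\textbf{Three-dimensional subalgebras.} Let $U$ be a three-dimensional subalgebra.

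If $e_3\notin U$, then $U$ is abelian with projection onto $V$ of dimension at most one. Hence $U\subseteq \F v+Z$ for some $v\in V$. Since $\dim(\F v+Z)=3$, we get $U=\F v+Z\ni e_3$, a contradiction. So every hyperplane subalgebra contains $e_3$.

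Now $U\ni e_3$, and we split on the projection $\pi(U)$ of $U$ to $V$ along $Z$.
\begin{itemize}
\item If $\pi(U)=V$, then $U\cap Z$ is one-dimensional and equals $\F e_3$. Choose preimages of $e_1$ and $e_2$ in $U$ and subtract their $e_3$-components. This gives exactly $\mathcal P_{3,1}(\lambda,\mu)$, with $q^2$ choices of $(\lambda,\mu)$. Each is closed under the bracket, since $[e_1+\lambda e_4,e_2+\mu e_4]=e_3$, and is isomorphic to $H_3$.
\item If $\dim\pi(U)=1$, then $U\supseteq Z$, so $U=Z+\F(\lambda e_1+\mu e_2)$. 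These are abelian and are parametrised by the $q+1$ points of $\mathbb P(V)$.
\end{itemize}
This gives $q^2+q+1$ hyperplane subalgebras. It also matches the fact that the hyperplanes containing $L'=\F e_3$ are exactly the ideals containing $e_3$, and there are $q^2+q+1$ of them. Indeed, any hyperplane containing $e_3$ is automatically a subalgebra, so one could simply count the hyperplanes of $L/\F e_3$.

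\textbf{Two-dimensional subalgebras.} There are two kinds, and we count each.
\begin{itemize}
\item Those containing $e_3$: all of the form $\F e_3+\F w$, one for each point of $L/\F e_3$, giving $q^2+q+1$. Each lies in some $\mathcal P_{3,2}$: for $w=\alpha e_1+\beta e_2+\gamma e_4$ with $(\alpha,\beta)\ne0$, take $\mathcal P_{3,2}$ for the point $(\alpha:\beta)$; if $(\alpha,\beta)=0$, it is $Z$ itself.
\item Those not containing $e_3$: abelian, with one-dimensional projection to $V$, so contained in a unique $\F v+Z$. Alternatively they are inside $Z$, but then they equal $Z\ni e_3$. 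So they are exactly the $2$-subspaces of some $\mathcal P_{3,2}$ avoiding $e_3$.
\end{itemize}
Each $\mathcal P_{3,2}$ is a three-dimensional abelian space with $q^2+q+1$ planes, of which $q+1$ contain $e_3$. Summing over the $q+1$ choices gives $(q+1)q^2$ planes avoiding $e_3$, and these are distinct because the projection determines $v$. The total is
\[q^3+q^2+q^2+q+1=q^3+2q^2+q+1.\]
No two-dimensional subalgebra is maximal, since each sits in a $\mathcal P_{3,2}$. Hence the maximal subalgebras are exactly the hyperplanes listed above.

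\textbf{Adjacency.} Items (3)--(5) follow from Lemma \ref{g-adjacency}: item (1) of that lemma, and items (4),(5) combined with the fact that the maximal subalgebras are exactly the three-dimensional ones just found.

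The main obstacle is the careful count of two-dimensional subalgebras, in particular making sure the two families do not overlap. Containment of $e_3$ separates them cleanly, and I would check that separation first.
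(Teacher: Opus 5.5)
Your proof is correct and follows essentially the paper's route: hyperplanes are split by whether they contain $Z$ (equivalently $e_4$), every two-dimensional subalgebra is placed inside one of the abelian $\mathcal{P}_{3,2}(\lambda,\mu)$, and (3)--(5) are read off from Lemma \ref{g-adjacency}. The differences are minor: you get $e_3\in U$ from the projection observation rather than from $L^2$ lying in every maximal subalgebra of a nilpotent algebra, and you count two-dimensional subalgebras as $(q^2+q+1)+q^2(q+1)$ by $e_3$-containment rather than as $1+(q+1)(q^2+q)$; one small step is missing, since before concluding that the maximal subalgebras are exactly the hyperplanes you should also note that no line is maximal (each $\F u$ lies in $\F u+\F e_3$ or in $Z$).
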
 
\begin{proof} The comaximal graph $\Gamma(L)$ has vertex set $\mathcal{V} = \mathcal{L} \sqcup \mathcal{P}_2 \sqcup \mathcal{P}_3$.

\smallskip

\noindent (1) First note that $L^2=\F e_3$ is in every maximal subalgebra $M$. If $e_4\notin M$ then $L=M+\F e_4$, so $e_1+\lambda e_4, e_2+\mu e_4\in M$ and $M$ is as in $\mathcal{P}_{3,1}$. If $e_4\in M$, then $M$ is as in $\mathcal{P}_{3,2}(\lambda,\mu)$.

\smallskip 

\noindent (2) Let $U$ be a two-dimensional subalgebra of $L$. It is clear from (1) that $U$ is not maximal in $L$, so $U$ is a proper subalgebra of $\mathcal{P}_{3,1}(\lambda,\mu)$ or of  $\mathcal{P}_{3,2}(\lambda,\mu)$. If $U$ belongs to the former, then $e_3\in U$, since, otherwise $e_1+\lambda e_4+\alpha e_3, b+\mu e_4+\beta e_3\in U$, which yields $e_3\in U$, a contradiction. Thus, $U$ is of the form $\F e_3+\F(\alpha e_1+\beta e_2+\gamma e_4)$. Each of these is spanned by $e_3$ and a line in a three-dimensional space, so, if $\F=\F_q$, there are $q^2+q+1$ of them. 
\par

The latter are abelian, so every two-dimensional subspace of each one is a subalgebra. So, when $\F=\F_q$, each one has $q^2+q+1$ subalgebras, one of which is $Z$ in each case, so there are $q^2+q$ which are not equal to $Z$. Now there are $q+1$ subalgebras of the form $\mathcal{P}_{3,2}(\lambda,\mu)$, so the toal number of such subalgebras is $(q+1)(q^2+q)+1$, the final one being $Z$.

\smallskip

\noindent (3) This follows from Lemma \ref{g-adjacency} (1) again.

\smallskip

\noindent (4) This follows from Lemma \ref{g-adjacency} (4), since the maximal subalgebras are the planes in $\mathcal{P}_3$.

\smallskip

\noindent (5) This follows from Lemma \ref{g-adjacency} (5), since the maximal subalgebras are the planes in $\mathcal{P}_3$.
    
\end{proof}
  \textbf{Case 3. $\dim L'=2$ - The filiform algebra}
This algebra has basis $e_1,e_2,e_3,e_4$ with $[e_1,e_2]=e_3$ and $[e_1,e_3]=e_4$. 

\begin{proposition}\label{fil} 
The proper nonzero subalgebras of $L$ are as given below.
 \begin{enumerate}
\item The three-dimensional subalgebras are of the form $\mathcal{P}_3(\lambda,\mu)=\F e_3+\F e_4 +\F(\lambda e_1+\mu e_2)$. If $\lambda \neq 0$ these are Heisenberg algebras; if $\lambda = 0$ this is abelian. When $\F=\F_q$, there are $q+1$ of these in total.
\item The two-dimensional subalgebras are the two-dimensional subspaces of $\F e_2+\F e_3+\F e_4$ together with $\F (e_1+\alpha e_2+\beta e_3)+\F e_4$. When $\F=\F_q$,there are $2q^2+q+1$ of these.
\item  If $A, B \in \mathcal{P}_3$ with $A \neq B$, then $A \sim B$. So, if $\F=\F_q$, $\Gamma(L)[\mathcal{P}_3]\cong K_{q^2q+1}$; that is, the induced subgraph on $\mathcal{P}$ is the complete graph $K_{q^2+q+1}$.
 \item If $A,B\in \mathcal{P}_2$, the set of two-dimensional subalgebras, then $A\sim B$ if and only if they don't belong to the same plane in $\mathcal{P}_3$.

\item If $A\in \mathcal{L}$ and $B\in \mathcal{P}_2$ then $A\sim B$ if and only if they don't belong to the same plane in $\mathcal{P}_3$.
 \end{enumerate}   
\end{proposition}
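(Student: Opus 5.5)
The plan is to first pin down the maximal subalgebras, then classify the two-dimensional subalgebras by hand, and finally read off the adjacency statements from Lemma \ref{g-adjacency}.

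\textbf{Three-dimensional subalgebras.} Since $L$ is nilpotent, every maximal subalgebra $M$ is an ideal of codimension one. Hence $M \supseteq L^2=\F e_3+\F e_4$. So the maximal subalgebras are exactly the hyperplanes containing $L^2$, and they correspond bijectively to the lines of the two-dimensional quotient $L/L^2\cong \F e_1+\F e_2$. This gives the form $\mathcal{P}_3(\lambda,\mu)=\F e_3+\F e_4+\F(\lambda e_1+\mu e_2)$, with $q+1$ of them when $\F=\F_q$. Every such hyperplane is a subalgebra, since it contains $L^2$.

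For the isomorphism type, compute $[\lambda e_1+\mu e_2,e_3]=\lambda e_4$, while $e_4$ is central and $[e_3,e_4]=0$. So for $\lambda\neq 0$ the bracket is Heisenberg with centre $\F e_4$, and for $\lambda=0$ the algebra $\F e_2+\F e_3+\F e_4$ is abelian. I also need that no three-dimensional subalgebra is non-maximal, which is trivial in dimension $4$. Consequently the complete graph in (3) is on these $q+1$ vertices; the ``$K_{q^2+q+1}$'' in the statement appears to be a misprint for $K_{q+1}$, and I would prove and state it as $K_{q+1}$.

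\textbf{Two-dimensional subalgebras (the main step).} Put $K=\F e_2+\F e_3+\F e_4$, which is an abelian ideal. If $U\subseteq K$, every two-dimensional subspace is a subalgebra, giving $q^2+q+1$ of them.

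Otherwise $U\not\subseteq K$, so $U\cap K$ is one-dimensional, spanned by some $v=ae_2+be_3+ce_4$. Then $U$ contains some $u=e_1+\alpha e_2+\beta e_3+\gamma e_4$, and closure requires $[u,v]=ae_3+be_4\in U\cap K$. Thus $ae_3+be_4=t(ae_2+be_3+ce_4)$ for some scalar $t$.
\begin{itemize}
\item Comparing $e_2$-coefficients gives $ta=0$. If $a\neq 0$ then $t=0$, which forces $a=0$, a contradiction; so $a=0$.
\item With $a=0$, comparing $e_3$-coefficients gives $tb=0$, and the same argument forces $b=0$.
\end{itemize}
Hence $v\in\F e_4$, and after subtracting a multiple of $e_4$ from $u$ we get $U=\F(e_1+\alpha e_2+\beta e_3)+\F e_4$. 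Conversely, each such $U$ is closed: $[e_1+\alpha e_2+\beta e_3,e_4]=0$. These subalgebras are distinct for distinct $(\alpha,\beta)$, giving $q^2$ more. The total is $2q^2+q+1$.

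This case analysis is the step requiring the most care: the point is that the condition $[u,v]\in\F v$ propagates down the filtration $e_2\mapsto e_3\mapsto e_4$.

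\textbf{Adjacency.}
\begin{itemize}
\item Item (3) follows from Lemma \ref{g-adjacency}(1).
\item Items (4) and (5) follow from Lemma \ref{g-adjacency}(4),(5). Here the maximal subalgebras are precisely the members of $\mathcal{P}_3$: $L$ is nilpotent, so no two-dimensional subalgebra is maximal, and every proper subalgebra lies in some $\mathcal{P}_3(\lambda,\mu)$.
\end{itemize}
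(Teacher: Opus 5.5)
Your proof is correct. It differs from the paper's mainly in how you find the two-dimensional subalgebras.

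\textbf{The paper's route.} It sorts the two-dimensional subalgebras by the maximal subalgebra containing them. The abelian hyperplane $K=\F e_2+\F e_3+\F e_4$ (the case $\lambda=0$) contributes all of its two-dimensional subspaces. In a Heisenberg hyperplane ($\lambda\neq 0$), any two-dimensional subalgebra must contain the centre $\F e_4$, which gives the family $\F(e_1+\alpha e_2+\beta e_3)+\F e_4$. Two points are left implicit there: why such subalgebras contain the centre, and the fact that $\F e_3+\F e_4$ lies in every Heisenberg hyperplane but was already counted inside $K$.

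\textbf{Your route.} You split according to whether $U\subseteq K$. When it is not, you note that the spanning vector $v$ of $U\cap K$ must satisfy $[u,v]\in\F v$. Conceptually, this says $v$ is an eigenvector of $\ad u|_K=\ad e_1|_K$, which is the single nilpotent Jordan block $e_2\mapsto e_3\mapsto e_4\mapsto 0$. Its only eigenline is $\F e_4$, and that is exactly what your coefficient comparison establishes. This argument is self-contained and makes the two families visibly disjoint. It also checks closure and distinctness directly, so the count $2q^2+q+1$ needs no further bookkeeping.

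\textbf{Item (3).} You are right that the statement contains a misprint. By item (1) there are only $q+1$ hyperplanes, so the induced subgraph on $\mathcal{P}_3$ is $K_{q+1}$. The $K_{q^2+q+1}$ was carried over from Proposition~\ref{heis+}, and the paper's proof (``(3)--(5) follow as in Proposition~\ref{heis+}'') does not correct it.
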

\begin{proof}
The comaximal graph $\Gamma(L)$ has vertex set $\mathcal{V} = \mathcal{L} \sqcup \mathcal{P}_2 \sqcup \mathcal{P}_3$.

\smallskip

\noindent (1) This follows because every maximal subalgebra must contain $L^2=\F e_3+\F e_4$.

\smallskip

\noindent (2) Note that $\mathcal{P}_3(0,\mu)$ is abelian, so every two-dimensional subspace of it is a subalgebra. Over $\F_q$ there are $q^2+q+1$ of these. If $\lambda \neq 0$ then $Fe_4$ must belong to any two-dimensional subalgebra, so the subalgebras are of the form $\F (e_1+\alpha e_2+\beta e_3)+\F e_4$ and, over $\F_q$, there are $q^2$ of these. So there are $2q^2+q+1$ in total.

\smallskip

(3)-(5) follow as in Proposition \ref{heis+}.
\end{proof}

\subsection{Comaximal graph of $\mathfrak{gl}_2(\F)$} 

We use the following, which is proved in \cite[Proposition 4.2]{TZG}.

\begin{proposition}\label{comax1} 
Using the standard basis $h,x,y$ for $L= \mathfrak{sl}_2(\F)$, where $[h,x]=2x$, $[h,y]=-2y$, $[x,y]=h$ (characteristic of $\F\neq 2$), the two-dimensional Borel subalgebras are $B=\F x + \F h$ and $B(\alpha)=\F(h+\alpha x) + \F(y +\frac{1}{4} \alpha^2 x)$ with $\alpha \in \F$. The nonsplit semisimple lines are $L(\mu,\nu)=\F(h+\mu x+\nu y)$ where  $\nu\neq 0$ and $\mu\nu+1$ is not a square in $\F$ and $L(\beta)=\F(y +\beta x)$ where $4\beta$ is not a square in $\F$. .
\end{proposition}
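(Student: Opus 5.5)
This is a direct computation inside $\mathfrak{sl}_2(\F)$ with the standard brackets $[h,x]=2x$, $[h,y]=-2y$, $[x,y]=h$, working in characteristic $\neq 2$. I would carry it out in three steps: identify the Borel subalgebras, confirm that the listed ones are all of them, and then classify the semisimple lines by a discriminant.

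\textbf{Step 1: the listed planes are subalgebras.} Two-dimensional subalgebras of $\mathfrak{sl}_2$ are solvable and are exactly the Borels. For $B=\F x+\F h$ this is immediate from $[h,x]=2x$. For $B(\alpha)$ I compute
\[[h+\alpha x,\; y+\tfrac14\alpha^2 x] = -2y+\tfrac12\alpha^2 x+\alpha h,\]
using $[h,y]=-2y$, $[h,x]=2x$ and $[x,y]=h$. The right-hand side equals $\alpha(h+\alpha x)-2(y+\tfrac14\alpha^2x)$, since $\alpha^2-\tfrac12\alpha^2=\tfrac12\alpha^2$. So $B(\alpha)$ is closed under the bracket.

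\textbf{Step 2: these are all the Borels.} Every two-dimensional subalgebra is the normaliser of its nilpotent line. The nilpotent elements $ax+bh+cy$ are those with $b^2+ac=0$, so the nilpotent lines are $\F x$ together with $\F(y+\tfrac14\alpha^2x-\tfrac{\alpha}{2}h)$. One checks that these lines lie in $B$ and $B(\alpha)$ respectively. This gives $|\F|+1$ Borels, one for each point of the projective conic, and matches Lemma~\ref{bijection} when $\F=\F_q$.

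\textbf{Step 3: classify semisimple lines.} I would compute the Killing-type discriminant. For $v=ah+bx+cy$, the characteristic polynomial of the matrix $\begin{pmatrix}a&b\\c&-a\end{pmatrix}$ is $t^2-(a^2+bc)$. So $v$ is split semisimple when $a^2+bc$ is a nonzero square, nilpotent when $a^2+bc=0$, and nonsplit when $a^2+bc$ is a nonsquare. Normalising a line by scaling:
\begin{itemize}
\item If $a\neq 0$, write the line as $\F(h+\mu x+\nu y)$ with discriminant $1+\mu\nu$. When $\nu=0$ the discriminant is $1$, a square, so nonsplit lines of this shape require $\nu\neq0$ and $\mu\nu+1$ a nonsquare.
\item If $a=0$ and $c\neq0$, write the line as $\F(y+\beta x)$ with discriminant $\beta$. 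This should be a nonsquare; it is equivalent to $4\beta$ being a nonsquare, since $4$ is a square.
\item If $a=c=0$, the line is $\F x$, which is nilpotent.
\end{itemize}

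The main obstacle is only bookkeeping: making sure the normalisation covers each line exactly once, and keeping the square conditions consistent with the scaling constant $4$. In particular I would double-check the $\tfrac14\alpha^2$ coefficient against the nilpotent-line parametrisation, since a factor-of-$2$ slip is easy there.
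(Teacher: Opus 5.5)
The paper gives no argument for this statement; it imports it from \cite{TZG}. Your direct computation is therefore a genuinely different, self-contained route, and its outline is sound. Step 1 is correct: $[h+\alpha x,y+\frac14\alpha^2x]=\alpha(h+\alpha x)-2(y+\frac14\alpha^2x)$. The normalisation in Step 3 is also correct. Every line takes exactly one of the forms $\F(h+\mu x+\nu y)$, $\F(y+\beta x)$, $\F x$, with discriminants $1+\mu\nu$, $\beta$, $0$, and rescaling does not change their square class.

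You should add one line linking your matrix discriminant to the paper's notion of a semisimple element. If $a^2+bc=\lambda^2\neq 0$ (over a splitting field), then $\ad v$ has the three distinct eigenvalues $0,\pm 2\lambda$, so $v$ is semisimple. Moreover, $t(t^2-4(a^2+bc))$ splits exactly when your polynomial does; this is also where the $4\beta$ in the statement comes from. What your route buys over the citation is an explicit bijection between two-dimensional subalgebras and nilpotent lines, i.e.\ points of the conic $b^2+ac=0$. That bijection is what lies behind the count of $q+1$ Borels used in Lemma~\ref{bijection}.

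Two points in Step 2 need repair.

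\textbf{The sign error.} With $c=1$ and $b=-\frac{\alpha}{2}$, the condition $b^2+ac=0$ forces $a=-\frac14\alpha^2$. So the nilpotent lines other than $\F x$ are $\F n_\alpha$ with $n_\alpha=y-\frac14\alpha^2x-\frac{\alpha}{2}h$. The element you wrote, $y+\frac14\alpha^2x-\frac{\alpha}{2}h$, has $b^2+ac=\frac12\alpha^2\neq 0$ for $\alpha\neq 0$. It is semisimple, not nilpotent, and it does not lie in $B(\alpha)$, so your ``one checks'' fails as written. With the correct sign, $n_\alpha=-\frac{\alpha}{2}(h+\alpha x)+(y+\frac14\alpha^2x)\in B(\alpha)$. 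A direct computation gives $[h+\alpha x,n_\alpha]=-2n_\alpha$, so $B(\alpha)=\F(h+\alpha x)+\F n_\alpha\subseteq N(\F n_\alpha)$, just as $B\subseteq N(\F x)$.

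\textbf{The normaliser claim.} The assertion that every two-dimensional subalgebra is the normaliser of its nilpotent line carries all the content of Step 2, but you only assert it. It has a short proof:
\begin{itemize}
\item In characteristic $\neq 2$, the centraliser in $\mathfrak{sl}_2(\F)$ of a nonzero $v$ is $\F v$. Hence a two-dimensional subalgebra $U$ is nonabelian and has a basis $u,w$ with $[u,w]=w$.
\item In $\mathfrak{gl}_2(\F)$ this gives $[u,w^2]=2w^2$, so $2\,\mathrm{tr}(w^2)=\mathrm{tr}([u,w^2])=0$. By Cayley--Hamilton $w^2=-\det(w)I$, so $w$ is nilpotent, and $U\subseteq N(\F w)$.
\item Since $\mathfrak{sl}_2(\F)$ is simple, no line has normaliser equal to $\mathfrak{sl}_2(\F)$. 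Each of these normalisers is therefore a proper subalgebra, of dimension at most $2$, containing a two-dimensional one.
\end{itemize}
Thus $U=N(\F w)$, $B=N(\F x)$ and $B(\alpha)=N(\F n_\alpha)$, which yields exactly the list in the statement.
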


\begin{proposition}
Let $L= \mathfrak{gl}_2(\F)=S\oplus Z$, where $S=sl_2(F)$ and $Z=Z(L)=Fz$. Use the standard basis $h,x,y$ for $S$, where $[h,x]=2x$, $[h,y]=-2y$, $[x,y]=h$ (characteristic of $F\neq 2$). Then
\begin{enumerate}[(i)]
\item the maximal subalgebras are $S$, $M=\F x + \F h+\F z$, $M(\alpha)=\F (h+\alpha x) + \F (y +\frac{1}{4} \alpha^2 x)+\F z$ with $\alpha \in \F$, $\F z+\F (h+\mu x+\nu y)$ where  $\nu\neq 0$ and $\mu\nu+1$ is not a square in $\F$, and $F z+\F (y +\beta x)$ where $4\beta$ is not a square in $\F$. If $\F=\F_q$ ($q$ odd), there are $q+2+\frac{q(q-1)}{2}$ of these.
\item The two-dimensional subalgebras are $\F x+\F (\lambda h+\mu z)$, $\F z+\F (\lambda h+\mu x)$, $\F (h+\alpha x)+\F (y +\frac{1}{4} \alpha^2 x+\beta z)$ and $\F (h+\frac{\alpha}{2} x -\frac{2}{\alpha} y)+F(h+\alpha x+\mu z)$, where $\mu\not=0$. If $\F=\F_q$ ($q$ odd), then there are $2q^2+2q+1$ of these. 
\end{enumerate}
\end{proposition}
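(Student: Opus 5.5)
The natural route is through the projection $\pi\colon L=S\oplus Z\to S$. Since $Z=\F z$ is central, a subspace $U\le L$ is a subalgebra precisely when $\pi(U)$ is a subalgebra of $S$ and $[U,U]\subseteq U$; the latter matters only when $z\notin U$, in which case $U$ is the graph $\{s+f(s)z : s\in \pi(U)\}$ of a linear functional $f$ on $\pi(U)$. I would therefore sort subalgebras $U$ by whether $z\in U$, and within each case by the type of $\pi(U)$, using Proposition~\ref{comax1} together with the known list of subalgebras of $\mathfrak{sl}_2$: lines, the Borels $B$, $B(\alpha)$, and $S$ itself.

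For (i), I would argue as follows. A maximal subalgebra $M$ either contains $z$ or it does not. If $z\notin M$, then $M+Z$ is a subalgebra strictly larger than $M$, so $M+Z=L$. Then $\pi(M)=S$ and $M\cong S$ is the graph of a functional $f$. Since $S$ is perfect, $f$ vanishes on $[S,S]=S$, so $M=S$.

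If $z\in M$, then $M=\pi(M)\oplus Z$ with $\pi(M)$ a maximal subalgebra of $S$. For $\mathfrak{sl}_2(\F)$ these are the Borels and the nonsplit semisimple lines, since split or nilpotent lines lie in Borels. This yields $M$, $M(\alpha)$, and the planes $\F z+L$ for nonsplit lines $L$, all listed via Proposition~\ref{comax1}.

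Over $\F_q$ the count is then immediate: $1$ (for $S$), plus $q+1$ Borels, plus $q(q-1)/2$ nonsplit lines, giving $q+2+\frac{q(q-1)}{2}$.

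For (ii), a two-dimensional subalgebra $U$ falls into two cases.

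\emph{Case $z\in U$.} Then $U=Z+\F s$ for any line $\F s$ of $S$. These are exactly the forms $\F z+\F(\lambda h+\mu x)$ and their conjugates. Equivalently, I would parametrize them by all lines of $S$, giving $q^2+q+1$ of them.

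\emph{Case $z\notin U$.} Then $\pi(U)$ is two-dimensional, hence a Borel $B'$, and $U$ is the graph of a functional $f$ on $B'$ vanishing on $[B',B']$. Since $B'=\F t+\F n$ with $t$ toral and $n$ nilpotent with $[t,n]=2n$, we get $f(n)=0$ and $f(t)$ arbitrary. This gives $q$ subalgebras per Borel, and $q(q+1)$ in total.

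Summing the two cases gives $q^2+q+1+q^2+q=2q^2+2q+1$, which matches the statement. The remaining task is to match these families with the explicit forms in the statement: for $B$, $U=\F x+\F(h+cz)$; for $B(\alpha)$, write the toral and nilpotent generators explicitly using the basis of Proposition~\ref{comax1}.

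I expect the main obstacle to be this bookkeeping. One must reconcile the stated parametrisations, which look overlapping or oddly normalised (for example the last family with $\mu\ne0$), with the clean description "line $+Z$" or "graph of a functional vanishing on the nilradical of a Borel", and check that no subalgebra is double-counted. I would verify the counts first via the clean description, then identify each listed family with a subset of it, checking for small $q$ such as $q=3$ against the table.
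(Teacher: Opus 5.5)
Your argument for (i) is the paper's (if $Z\not\subseteq M$ then $M=M^2=L^2=S$; otherwise $M=\pi(M)\oplus Z$ with $\pi(M)$ a Borel or a nonsplit line). Your count in (ii) is also the paper's count: $q^2+q+1$ planes $Z+\F s$, plus $q$ planes not containing $z$ over each of the $q+1$ Borels. Your observation that such a $U$ is the graph of a functional vanishing on $[B',B']$ supplies the justification the paper omits.

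The gap is the step you defer as bookkeeping, namely matching the displayed families with your description. It cannot be carried out, because the explicit list in (ii) is wrong.

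\emph{A family that is not closed.} For the third family, $[h+\alpha x,\,y+\tfrac14\alpha^2x+\beta z]=\alpha h+\tfrac12\alpha^2x-2y$. Expressing this in the two generators forces coefficient $-2$ on the second one, and hence a $z$-component $-2\beta$. So the family is a subalgebra only for $\beta=0$, where it is just $B(\alpha)$.

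\emph{Missing subalgebras.} The fourth family is correct for $\alpha\neq 0$, since $h+\frac{\alpha}{2}x-\frac{2}{\alpha}y$ spans the nilradical of $B(\alpha)$. But it is undefined at $\alpha=0$, so $\F y+\F(h+\mu z)$ with $\mu\neq0$ never appears. Also, of the planes containing $z$ only $\F z+\F s$ with $s\in B$ are listed, so for example $\F z+\F y$ is absent. For $q=3$, even adding the three planes $\F z+\F s$ with $s$ nonsplit from (i), the valid members of the list account for only $17$ of the $25$ two-dimensional subalgebras.

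These errors come from the paper's derivation of the list. It chases elements inside $M(\alpha)$ assuming that $h+\alpha x+\lambda(y+\frac14\alpha^2x)$ and $h+\alpha x+\mu z$ both lie in $U$ with $\lambda,\mu\neq0$. It also never checks closure in the case $h+\alpha x\in U$. Your structural route avoids both problems.

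So instead of promising a reconciliation, state and prove the list your method actually gives:
\begin{enumerate}
\item $\F z+\F s$ for every line $\F s\subseteq S$;
\item $\F x+\F(h+\mu z)$ with $\mu\in\F$;
\item $\F n_\alpha+\F(h+\alpha x+\mu z)$ with $\alpha,\mu\in\F$, where $n_\alpha=y-\frac{\alpha}{2}h-\frac{\alpha^2}{4}x$ spans $[B(\alpha),B(\alpha)]$ (indeed $[h+\alpha x,n_\alpha]=-2n_\alpha$).
\end{enumerate}
These are pairwise distinct, and over $\F_q$ they number $(q^2+q+1)+q+q^2=2q^2+2q+1$. With this correction your argument fully proves (i) and the count in (ii). The parametrisation displayed in the statement, however, is not provable as written.
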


\begin{proof} 
\begin{enumerate}[(i)]
\item Let $M$ be a maximal subalgebra of $L$. If $Z\not\subseteq M$, then $L=M\oplus Z$ and $M\cong S$. But $M=M^2=L^2=S$, so there is one such subalgebra.
\par

If $Z\subseteq M$, the maximal subalgebras are spanned by $Z$ together with a Borel subalgebra, or with a non-split semisimple line. Using Proposition \ref{comax1} this gives the subalgebras listed. In $sl_2(\F_q)$ ($q$ odd) there are $q+1$ Borel subalgebras and $\frac{q(q-1)}{2}$ non-split semimple lines (see \cite[Proposition 4.10]{TZG}).

\item  The remaining two-dimensional subalgebras will be inside $S$, $M$ or $M(\alpha)$. Those inside $S$ will be $B$ and $B(\alpha)$.
\par

Let $U$ be a two-dimensional subalgebra of $L$ with $U\subset M$. If $x\in U$, then $U=\F x+\F (\lambda h+\mu z)$. Note that $B$ is one of these. If $x\notin U$ then $M=\F x+U$. Hence $h+\alpha x\in U$ and $z+\beta x\in U$, so $2\beta x=[h+\alpha x,z+\beta x]\in U$. It follows that $\beta=0$ and $U=\F z+\F (\lambda h+\mu x)$.
\par

Suppose now that $U$ is a two-dimensional subalgebra of $L$ with $U\subset M(\alpha)$. If $h+\alpha x\in U$. Then $U=\F (h+\alpha x)+\F (y +\frac{1}{4} \alpha^2 x+\beta z)$. Note that $B(\alpha)$ is one of these. If $h+\alpha x\notin U$, then $M(\alpha)=\F (h+\alpha x)+U$. Hence $h+\alpha x+\lambda(y+\frac{1}{4} \alpha^2 x)\in U$ and $h+\alpha x+\mu z\in U$, where $\lambda,\mu\not=0$. Then $[h+\alpha x+\lambda(y+\frac{1}{4} \alpha^2 x),h+\alpha x+\mu z]=2\lambda y-\frac{1}{2}\lambda\alpha^2 x-\lambda\alpha h\in U$ if and only if $2\lambda y-\frac{1}{2}\lambda\alpha^2 x-\lambda\alpha h=A(h+\alpha x+\lambda(y+\frac{1}{4} \alpha^2 x))+B(h+\alpha x+\mu z)$. Comparing coefficients, we have $B=0$, $A=-\lambda\alpha$ and $A\lambda=2\lambda$, so $A=2$ and $\lambda=-\frac{2}{\alpha}$. It follows that $U=\F (h+\frac{\alpha}{2} x -\frac{2}{\alpha} y)+F(h+\alpha x+\mu z)$, where $\mu\not=0$.
\par

Now suppose that $\F=\F_q$ ($q$ odd). The subalgebras containing $Z$ are $Z$ together with a line in $sl_2(\F_q)$ and there are $q^2+q+1$ of these. The remainder are inside a subalgebra containing a Borel subalgebra and $Z$, of which there are $q+1$, and, within each of these, there are $q$ subalgebras that do not contain $Z$. Hence, in total, there are $$q^2+q+1+q(q+1)=2q^2+2q+1$$ two-dimensional subalgebras.
\end{enumerate}
\end{proof}

\section{Relationships between $\Gamma(L)$ and properties of the Lie algebra $L$}
The subalgebras that are inside $F(L)$ are the isolated points of the graph, by \cite[Lemma 3.2]{TZG}. Hence, if there are no isolated points, then $F(L)=0$. Now suppose that $F(L)=\phi(L)$, the Frattini ideal (so characteristic zero or solvable $L$). If we remove the isolated points, then what we are left with represents all the subalgebras $A$ such that $(A+\phi(L))/\phi(L)$ is a proper nonzero subalgebra of $L/\phi(L)$.

\begin{lemma} \label{5.1}
If $A$ is a proper subalgebra of $L$ such that $A\not\subseteq \phi(L)$ then $(A+\phi(L))/\phi(L)$ is a proper nonzero subalgebra of $L/\phi(L)$. Moreover, if $U/\phi(L)$ is a proper nonzero subalgebra of $L/\phi(L)$, then there is a proper subalgebra $A$ such that $A\not\subseteq \phi(L)$ and $U=\phi(L)+A$.
\end{lemma}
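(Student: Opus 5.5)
The argument is essentially the correspondence theorem, together with the fact that $\phi(L)$ lies in every maximal subalgebra. Throughout we use that $\phi(L)$ is an ideal of $L$ contained in $F(L)$, hence in every maximal subalgebra. In particular, for any subalgebra $A$, the subspace $A+\phi(L)$ is a subalgebra of $L$, because $[A,\phi(L)]\subseteq\phi(L)$ and $[\phi(L),\phi(L)]\subseteq\phi(L)$.

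For the first assertion, let $A$ be a proper subalgebra with $A\not\subseteq\phi(L)$. Then $A+\phi(L)\neq\phi(L)$, so $(A+\phi(L))/\phi(L)$ is a nonzero subalgebra of $L/\phi(L)$. For properness, note that $L$ is finite-dimensional, so $A$ lies in some maximal subalgebra $M$. Since $\phi(L)\subseteq F(L)\subseteq M$, we get $A+\phi(L)\subseteq M\neq L$. Hence the quotient $(A+\phi(L))/\phi(L)$ is proper.

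For the second assertion, let $U/\phi(L)$ be a proper nonzero subalgebra of $L/\phi(L)$. By the correspondence theorem, $U$ is a subalgebra of $L$ with $\phi(L)\subsetneq U\subsetneq L$. Take $A=U$ itself. Then $A$ is proper, $A\not\subseteq\phi(L)$ because $U\neq\phi(L)$, and $U=\phi(L)+A$ because $\phi(L)\subseteq U$.

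If one insists that $A$ be genuinely smaller than $U$, the following refinement works. Choose a subalgebra $A\subseteq U$ minimal with respect to $A+\phi(L)=U$; such an $A$ exists since $U$ itself qualifies and dimensions are finite. It is not contained in $\phi(L)$ since $U\neq\phi(L)$. This refinement is not needed for the statement as given.

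There is no real obstacle here. The only point requiring care is the properness in the first part, which relies on $\phi(L)$ being contained in a maximal subalgebra containing $A$. This in turn relies on $L$ having maximal subalgebras. That is automatic for $L\neq 0$ in finite dimension. In the degenerate case where $F(L)=L$ by convention, we have $\phi(L)=L$, there is no $A\not\subseteq\phi(L)$, and the first statement holds vacuously. The second also holds vacuously, since then $L/\phi(L)=0$ has no proper nonzero subalgebras.
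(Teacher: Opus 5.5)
Your proof is correct and follows the paper's argument. The paper gets properness from the non-generator property ($A+\phi(L)=L$ forces $A=L$), which is exactly what your inclusion $A+\phi(L)\subseteq M\neq L$ establishes directly. For the second part the paper takes $A=\langle V\rangle$ with $V$ a complement to $\phi(L)$ in $U$, while your choice $A=U$ satisfies the statement as written just as well, and your minimal-$A$ refinement matches the paper's aim of a possibly smaller $A$.
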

\begin{proof}  If $A$ is a subalgebra of $L$ such that $A\not\subseteq \phi(L)$ then $(A+\phi(L))/\phi(L)$ is clearly a subalgebra of $L/\phi(L)$. It is nonzero since $A\not\subseteq \phi(L)$, and it is proper since $A+\phi(L)=L$ implies that $A=L$.
\par

Now let $U/\phi(L)$ be a proper nonzero subalgebra of $L/\phi(L)$ and write $U=\phi(L)\dot{+} V$ where $V$ is a complementary subspace to $\phi(L)$ in $U$. Put $A=\langle V\rangle$. Then $A\subset U$ and $U=\phi(L)+A$.
\end{proof}
\medskip

However, note that $A+\phi(L)=B+\phi(L)$ does not necessarily imply that $A=B$. 
\par

It is clear that if two Lie algebras have the same subalgebra lattice then they will have the same graph $\Gamma(L)$. This means that the graph cannot distinguish between an abelian Lie algebra and an almost abelian one, nor between a nilpotent algebra and an almost nilpotent one (see \cite{tow}). However, it does not give complete information about the subalgebra lattice. For example if $\dim F(L)>2$ then, because the subalgebras inside $F(L)$ are isolated points, it gives no information about inclusions between them. The graph essentially tells us whether or not two subalgebras belong to the same maximal subalgebra. 
\par

We have the following two results from \cite{TZG}.

\begin{theorem}(\cite[Theorem 3.3]{TZG})
$\Gamma(L)$ is complete if and only if every proper subalgebra is one-dimensional.
\end{theorem}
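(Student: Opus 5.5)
The plan is to prove both directions directly from the definition of adjacency, using only that $\langle A,B\rangle$ is the smallest subalgebra containing $A$ and $B$.

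\emph{Sufficiency.} Suppose every proper nonzero subalgebra of $L$ is one-dimensional, and let $A\neq B$ be two vertices. Then $A=\F a$ and $B=\F b$ with $a,b$ linearly independent. The subalgebra $\langle A,B\rangle$ contains $A+B$, so it has dimension at least $2$. It is therefore not a proper nonzero subalgebra, by hypothesis, and since it is nonzero it must equal $L$. Hence $A\sim B$. Every pair of distinct vertices is adjacent, so $\Gamma(L)$ is complete. (If $\Gamma(L)$ has at most one vertex it is trivially complete.)

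\emph{Necessity.} Suppose $\Gamma(L)$ is complete, and suppose for contradiction that some proper subalgebra $U$ has $\dim U\geq 2$. Choose a nonzero $u\in U$ and set $A=\F u$. Then $A$ is a subalgebra, since $[u,u]=0$. It is proper and nonzero, and $A\neq U$ because $\dim U\geq 2$. So $A$ and $U$ are distinct vertices of $\Gamma(L)$. But $A\subseteq U$ gives $\langle A,U\rangle=U\neq L$, so $A\not\sim U$. This contradicts completeness. Hence every proper nonzero subalgebra is one-dimensional.

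The argument is elementary, and the only step needing care is the choice of witness in the necessity direction. That witness, a line inside a larger proper subalgebra, always exists because one-dimensional subspaces are automatically subalgebras by the alternating property. The degenerate cases also need a word. When $\dim L\leq 1$ there are no vertices, or no proper nonzero subalgebras, and both sides hold vacuously. The case $L=\mathfrak{su}_2(\F_q)$ from Case D, whose proper subalgebras are all lines, is consistent with the statement.
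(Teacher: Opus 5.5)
Your proof is correct. In the forward direction, two distinct lines span a subspace of dimension at least two, so the subalgebra they generate must be $L$; in the converse, a line inside a proper subalgebra of dimension at least two gives a non-adjacent pair. This paper does not prove the statement itself but only quotes it from \cite{TZG}, and your direct argument, reading ``proper'' as ``proper nonzero'', is the natural proof of it.
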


\begin{theorem}\label{p:gen} (\cite[Theorem 3.4]{TZG})
Let $L$ be a Lie algebra over a perfect field $\mathbb{F}$ of characteristic zero or $p>3$. Then every proper subalgebra of $L$ is one-dimensional if and only if either
\begin{itemize}
\item[(i)] $\dim L\leq 2$, or
\item[(ii)] $L$ is three-dimensional simple and $\sqrt{\mathbb{F}} \not \subseteq \mathbb{F}$. 
\end{itemize}
\end{theorem}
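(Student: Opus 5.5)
The plan is to prove the two implications separately, using the classical description of Lie algebras of small dimension together with the fact that every element lies in a subalgebra. I will throughout use that for any $0\neq a\in L$, the line $\F a$ is a subalgebra, and that every subalgebra is contained in the span of its elements.

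\emph{Sufficiency.} If $\dim L\le 2$, every proper nonzero subspace is a line, hence the claim is immediate. If $L$ is three-dimensional simple and $\sqrt{\F}\not\subseteq\F$, suppose $A$ is a two-dimensional subalgebra. Then $A$ is solvable, hence contains an element $a\neq 0$ with $[A,A]\subseteq \F a$. If $A$ is nonabelian, $\ad a$ has a nonzero eigenvalue on $L$. Over a field of characteristic $0$ or $p>3$, a three-dimensional simple algebra is a form of $\mathfrak{sl}_2$, identified with the trace-zero elements of a quaternion algebra $Q$ via the commutator. An element $a$ with $\ad a$ having an eigenvalue in $\F$, or a two-dimensional (possibly abelian) subalgebra, produces a zero divisor in $Q$. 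Indeed, the Killing form restricted to $A$ is degenerate, since $[A,A]$ is in its radical, so $Q$ is split, i.e. $L\cong\mathfrak{sl}_2(\F)$. But $\mathfrak{sl}_2(\F)$ always has the Borel subalgebra $\F h+\F x$, so we must check how the hypothesis $\sqrt{\F}\not\subseteq\F$ enters. I would read it as the condition that $L$ is the non-split form (as for $\mathfrak{su}_2$ over $\F_q$ in the previous section), and argue that a non-split form has no element with $\ad$-eigenvalue in $\F$. Then $\ad a$ restricted to $A$ has no eigenvector, contradicting the existence of a two-dimensional subalgebra (abelian case: $C_L(a)$ would be two-dimensional, again forcing splitting via the norm form).

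\emph{Necessity.} Assume every proper subalgebra is one-dimensional and $\dim L\ge 3$. First, $L$ is not solvable. Otherwise $L^{(1)}\neq L$, and any subspace $W$ with $L'\subseteq W$ of codimension one is an ideal, giving a proper subalgebra of dimension $\dim L-1\ge 2$. Next, if $L$ has a nonzero proper ideal $I$, then for any $x\notin I$, $I+\F x$ is a subalgebra. If it is proper it has dimension $\ge 2$, so $\dim I\ge 1$ forces $L=I+\F x$, i.e. every ideal has codimension one; then $I$ itself must be one-dimensional, and $\dim L=2$, a contradiction. So $L$ is simple. For simple $L$ of dimension $>3$ over a perfect field of characteristic $0$ or $p>3$, I would take a nonzero element $x$ and consider $C_L(x)$ or a Cartan subalgebra. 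If a Cartan subalgebra $H$ is proper of dimension $\ge 2$ we are done. Otherwise every Cartan subalgebra is one-dimensional, so $L$ has rank one, which over such fields forces $\dim L=3$. This is the step I expect to be the main obstacle: the rank-one classification in characteristic $p>3$ requires Block–Wilson type results (or the Witt algebra must be excluded, since $W(1;1)$ has rank one but its standard maximal subalgebra has dimension $p-1\ge 2$). Finally, for $\dim L=3$ simple, non-split forms are exactly the ones avoiding two-dimensional subalgebras, as shown in the sufficiency direction, giving (ii).
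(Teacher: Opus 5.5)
The paper gives no proof of this statement; it quotes \cite[Theorem 3.4]{TZG}. So your proposal has to be judged on its own, and it has a genuine gap: necessity in characteristic $p>3$ is not proved.

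Your reductions are fine. A solvable $L$ with $\dim L\ge 3$ has an ideal of codimension one, and a nonzero proper ideal $I$ yields the two-dimensional subalgebra $I+\F x$, so $L$ is simple. In characteristic $0$ the Cartan argument also finishes: $L\otimes_{\F}\overline{\F}$ is semisimple with a one-dimensional Cartan subalgebra, hence $\cong\mathfrak{sl}_2$.

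In characteristic $p>3$, however, you leave the decisive step open. There the implication ``every Cartan subalgebra is one-dimensional $\Rightarrow \dim L=3$'' is false, as your own $W(1;1)$ example shows. So after your reductions nothing rules out a simple $L$ of dimension greater than $3$. Pointing to Block--Wilson-type classifications does not repair this. Those results are over algebraically closed fields, and your hypothesis is not inherited by $L\otimes_{\F}\overline{\F}$: even the non-split three-dimensional algebra acquires Borel subalgebras there. To use them you would have to do three things, none of which is in the proposal:
\begin{enumerate}
\item show that $L\otimes_{\F}\overline{\F}$ is simple with a one-dimensional Cartan subalgebra;
\item invoke a classification of such algebras, which contains non-classical members;
\item show by a descent argument that no $\F$-form of any candidate other than $\mathfrak{sl}_2$ has all proper subalgebras one-dimensional, for instance by descending an automorphism-invariant subalgebra of dimension $\ge 2$.
\end{enumerate}
What is needed is input valid over the non-closed perfect field itself. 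Your hypothesis says exactly that the subalgebra lattice of $L$ has length two (hence is modular) and that $L$ is minimal non-abelian. That is the setting of the structure theory of Lie algebras with modular subalgebra lattice, or of minimal non-abelian Lie algebras, over perfect fields of characteristic $0$ or $p>3$, and some result of that kind has to be brought in.

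The rest is essentially right but needs some repairs.
\begin{itemize}
\item Your reinterpretation of (ii) is forced, not optional. Read literally, (ii) is false: $\mathfrak{sl}_2(\F)$ over any field that is not quadratically closed has the Borel subalgebra $\F h+\F x$. The statement must mean that $L$ is a non-split three-dimensional simple algebra. For necessity you should then add why non-splitting forces $\sqrt{\F}\not\subseteq\F$: over a quadratically closed field the nondegenerate ternary Killing form is isotropic.
\item In the sufficiency argument it is $\ad b$, for $b\in A\setminus\F a$, that has a nonzero eigenvalue. The map $\ad a$ is nilpotent, and $a$ is a nonzero isotropic vector of the Killing form; that is what forces splitting.
\item In the abelian case the correct reason is not ``splitting'': $C_L(a)=\F a$ for every $a\ne 0$ in any three-dimensional simple algebra, so two-dimensional abelian subalgebras never occur.
\item Your parenthetical example is not a non-split form. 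Over $\F_q$ with $q$ odd every quaternion algebra splits, so the algebra with $[e_1,e_2]=e_3$, $[e_2,e_3]=e_1$, $[e_3,e_1]=e_2$ is isomorphic to $\mathfrak{sl}_2(\F_q)$. For $q=3$, $\F_3(e_1+e_2+e_3)+\F_3(e_2-e_3)$ is a two-dimensional subalgebra. Hence case (ii) never arises over a finite field.
\end{itemize}
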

\medskip

If the underlying field is finite and we know the dimension of $L$ then we can deduce more by counting the number of vertices in $\Gamma(L)$.

\begin{lemma}\label{abalab}
If $\dim L=n$ and $\F=\F_q$, then the number of vertices in $\Gamma(L)$ is $\sum_{k=1}^{n-1}\binom{n}{k}_q$, where $\binom{n}{k}_q$ is the Gaussian binomial coefficient, if and only if $L$ is abelian or almost abelian.
\end{lemma}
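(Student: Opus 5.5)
The vertices of $\Gamma(L)$ are exactly the proper nonzero subalgebras of $L$. The quantity $\sum_{k=1}^{n-1}\binom{n}{k}_q$ is precisely the number of proper nonzero subspaces of the $n$-dimensional $\F_q$-space $L$. Since every subalgebra is a subspace, the vertex count always satisfies $|V(\Gamma(L))|\le \sum_{k=1}^{n-1}\binom{n}{k}_q$. Equality holds if and only if every subspace of $L$ is a subalgebra. So the plan is to prove the purely algebraic statement that every subspace of $L$ is a subalgebra if and only if $L$ is abelian or almost abelian. Here I take almost abelian in the sense of \cite{tow}: $L=\F x\dot{+}A$ with $A$ an abelian ideal and $[x,a]=a$ for all $a\in A$, possibly together with the condition $n\ge 2$. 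In this form $L$ is nonabelian, so the case analysis is cleanest if I treat the abelian case separately.

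\textbf{Sufficiency.} If $L$ is abelian, every subspace is trivially closed under the bracket. If $L$ is almost abelian, take any subspace $U$ and $u,v\in U$, written $u=\lambda x+a$ and $v=\mu x+b$ with $a,b\in A$. Then
\[[u,v]=\lambda b-\mu a=\lambda v-\mu u\in U,\]
so $U$ is a subalgebra.

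\textbf{Necessity.} Suppose every subspace of $L$ is a subalgebra. Then every two-dimensional subspace $\F u+\F v$ is a subalgebra, so $[u,v]\in \F u+\F v$ for all $u,v$. This means each $\ad u$ maps every vector $v$ into $\F u+\F v$. I then run the standard argument, which has three steps.

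\begin{enumerate}
\item Fix $u\in L$ and work modulo $\F u$. The induced map $\ad u$ sends every vector $v$ to a multiple of $v$, so every nonzero vector is an eigenvector. A linear map with this property is scalar. Hence there is a linear functional $f$ with
\[[u,v]\equiv f_u\,v \pmod{\F u}.\]
This requires $\dim L/\F u\ge 2$. When $n\le 2$ the claim is direct: $\dim L\le 2$ gives abelian or the two-dimensional nonabelian algebra, which is almost abelian.

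\item Using anticommutativity $[u,v]=-[v,u]$ and comparing coefficients for independent $u,v$, refine this to the exact formula
\[[u,v]=f(u)v-f(v)u\]
for a single linear functional $f$ on $L$.

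\item Conclude from the formula. If $f=0$, then $L$ is abelian. Otherwise put $A=\ker f$, which is an abelian ideal, and choose $x$ with $f(x)=1$. Then $[x,a]=a$ for all $a\in A$, so $L$ is almost abelian.
\end{enumerate}

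The main obstacle is step 2: passing from the pointwise statement ``each $\ad u$ acts as a scalar modulo $\F u$'' to a single global functional $f$. My approach is to write $[u,v]=\alpha(u,v)v+\beta(u,v)u$ for independent $u,v$. Anticommutativity gives $\beta(u,v)=-\alpha(v,u)$. I then show that $\alpha(u,v)$ does not depend on $v$ by testing against a third vector $w$ and using $v+w$: apply bilinearity to $[u,v+w]$ and compare coefficients in the span of $u,v,w$. When $u,v,w$ are independent this forces $\alpha(u,v)=\alpha(u,w)$. Linearity of $u\mapsto\alpha(u,\cdot)$ then follows the same way. The Jacobi identity should not be needed for this. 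One edge case needs separate care: if $\dim L=3$ and $u,v,w$ span $L$, the comparison still works because they are independent. If they are dependent, I use an auxiliary vector instead.
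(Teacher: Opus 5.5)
Your proof is correct and follows the paper's route: $\sum_{k=1}^{n-1}\binom{n}{k}_q$ is exactly the number of proper nonzero subspaces of $L$, so the vertex count reaches this value precisely when every subspace is a subalgebra. The paper stops there and takes as known that such algebras are abelian or almost abelian, whereas you prove this directly via $[u,v]=f(u)v-f(v)u$, which is a valid self-contained addition; note that the dimension restriction is only needed for the additivity of $f$, which requires a third independent vector (hence the separate treatment of $n\le 2$), not for the scalar-map step.
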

\begin{proof}
This follows because $\binom{n}{k}_q$ is the number of proper nonzero subspaces in $L$, and so every such subspace is a subalgebra.
\end{proof}

\begin{proposition} If $\dim (L/\phi(L)=n$, $\F=\F_q$, and the number of vertices in $\Gamma(L/\phi(L))$ is $\sum_{k=1}^{n-1}\binom{n}{k}_q$ then $L$ is supersolvable. Moreover, either $L$ is nilpotent, or $L^2=N(L)$, the nilradical of $L$, has codimension one in $L$.  
\end{proposition}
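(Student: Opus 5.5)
By Lemma \ref{abalab} applied to $\bar L = L/\phi(L)$, the hypothesis on the vertex count says exactly that $\bar L$ is abelian or almost abelian. So the plan is to deduce everything from the structure of $\bar L$ and then lift it back to $L$ through the Frattini ideal.

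\textbf{Step 1: the structure of $\bar L$.} If $\bar L$ is abelian, it is nilpotent. If $\bar L$ is almost abelian, then $\bar L = \F \bar x \dot{+} A$ with $A$ an abelian ideal of codimension one and $\ad \bar x|_A$ the identity. In either case $\bar L$ is supersolvable, because every subspace of $\bar L$ is a subalgebra, and a subspace of $A$ (or of an abelian $\bar L$) is in fact an ideal. Hence there is a complete flag of ideals.

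\textbf{Step 2: lifting supersolvability.} Here I invoke the standard Frattini-type result that $L$ is supersolvable if and only if $L/\phi(L)$ is supersolvable (Barnes; Towers). Since $\phi(L)$ is nilpotent, $L$ is solvable once $\bar L$ is. The supersolvability statement itself I would cite as known; it holds over any field for the Frattini ideal. I expect this citation to be the main point needing care, since it carries all the depth of the argument; the rest is bookkeeping.

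\textbf{Step 3: the dichotomy.}
\begin{itemize}
\item If $\bar L$ is abelian, then $L^2 \subseteq \phi(L)$. A Lie algebra with $L^2\subseteq\phi(L)$ is nilpotent (Barnes' analogue of Wielandt's theorem), so $L$ is nilpotent.
\item If $\bar L$ is almost abelian, then $\bar L^2 = A$ has codimension one. The nilradical satisfies $N(L)/\phi(L) = N(\bar L)$, using the standard fact $N(L/\phi(L)) = N(L)/\phi(L)$. Moreover $N(\bar L) = A$, because $\ad \bar x$ acts as the identity on $A$, so $\bar L$ itself is not nilpotent.
\end{itemize}

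\textbf{Step 4: showing $L^2 = N(L)$ in the almost abelian case.} We have $L^2 + \phi(L) = N(L)$, since the preimage of $\bar L^2 = A$ is $N(L)$. For solvable $L$, $\phi(L) \subseteq L^2$ (Towers), so $L^2 = N(L)$, which has codimension one.
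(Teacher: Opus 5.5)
Your argument is correct and follows the same route as the paper: Lemma~\ref{abalab} makes $L/\phi(L)$ abelian or almost abelian, Barnes' theorem lifts supersolvability, and the abelian/almost abelian split gives the dichotomy. The only difference is the last step. The paper gets $L^2=N(L)$ from $L^2$ having codimension one and $L^2\subseteq N(L)\neq L$, whereas you identify $N(L)$ as the preimage of $N(L/\phi(L))$; both work, and your explicit use of $\phi(L)\subseteq L^2$ states a point the paper uses without comment.
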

\begin{proof} By Lemma \ref{abalab}, $L/\phi(L)$ is abelian or almost abelian. Hence, $L/\phi(L)$ is supersolvable, and so $L$ is supersolvable, by \cite[Theorem 6]{barnes}.
\par

If $L/\phi(L)$ is abelian, then $L$ is nilpotent. If $L/\phi(L)$ is almost abelian, then $L^2/\phi(L)=(L/\phi(L))^2$ has codimension one in $L/\phi(L)$. Also $L^2\subseteq N(L)$ so $L^2=N(L)$.
\end{proof}

\begin{lemma}
If $\dim L=3$ and $\F=\F_q$ with $q>3$, then the number of vertices in $\Gamma(L)$ is $q^2+q+1$ if and only if it is a three-dimensional non-split simple Lie algebra.
\end{lemma}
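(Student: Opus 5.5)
The plan is to compute the number of vertices $|\mathcal{L}|+|\mathcal{P}|$ of $\Gamma(L)$ in each case of the classification of three-dimensional Lie algebras over $\F_q$ used in Section 3. We then check that the total $q^2+q+1$ occurs only in the non-split simple case.

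First we account for the lines. Every one-dimensional subspace is a subalgebra, so $|\mathcal{L}|=q^2+q+1$ in every case. Any proper subalgebra is a line or a plane, so the vertex set is $\mathcal{L}\sqcup\mathcal{P}$, with the possible exception of $L$ itself, which is not a vertex. Hence the vertex count equals $q^2+q+1$ if and only if $L$ has no two-dimensional subalgebra, that is, every proper subalgebra is one-dimensional. (Isolated vertices still count as vertices, so nothing is lost by including lines inside $F(L)$.)

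Next we go through the cases and exhibit a plane whenever $L$ is not non-split simple. If $L$ is solvable, then $L'\neq L$, and we can take any subspace $U$ of codimension one containing $L'$. Then $[U,U]\subseteq L'\subseteq U$, so $U$ is a two-dimensional subalgebra. Concretely, Case A has $q^2+q+1$ planes, Cases B1 and B2 have $q+1$ and $2q+1$ planes (as recalled from \cite{TZG}), and Case C has $|\mathcal{P}|\ge 1$ by Proposition \ref{dim3-derived2}. If $L$ is perfect, then since $q>3$ the cited \cite[Theorem 4.9]{TZG} gives $L\cong\mathfrak{sl}_2(\F_q)$ or $L$ is the non-split algebra. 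In the split case the $q+1$ Borel subalgebras are planes, so the count is $q^2+2q+2$. For the converse, in the non-split case every proper subalgebra is one-dimensional, as stated before Lemma \ref{bijection} (alternatively, Theorem \ref{p:gen}), so the count is exactly $q^2+q+1$.

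The only delicate point is the hypothesis on $q$. For $q=2$ or $3$ the perfect classification used above may fail or the characteristic restriction in Theorem \ref{p:gen} does not apply, so we rely on \cite[Theorem 4.9]{TZG} with $q>3$. We would check that this theorem, which is stated for $q\ne 2$, covers all $q>3$ including characteristic $3$ powers such as $q=9$. If it does not, we would instead argue directly: a plane exists whenever $L$ has a nonzero element $x$ such that $\ad x$ has an eigenvector $v$ with eigenvalue $\lambda$, since then $\F x+\F v$ is a subalgebra. The non-split form is exactly the perfect algebra in which no such eigenvector exists.
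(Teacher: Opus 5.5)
Your reduction matches the paper's first step. Every line is a subalgebra, so the vertex count is $q^2+q+1$ exactly when $L$ has no two-dimensional subalgebra. Your forward direction is a more explicit substitute for the paper's single appeal to Theorem \ref{p:gen}: you use a plane through $L'$ when $L'\neq L$, and the Borel subalgebras when $L\cong\mathfrak{sl}_2(\F_q)$.

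\textbf{The converse fails.} You take the non-split algebra to be $[e_1,e_2]=e_3$, $[e_2,e_3]=e_1$, $[e_3,e_1]=e_2$ from \cite[Theorem 4.9]{TZG}. Citing the remark before Lemma \ref{bijection}, you assert that all its proper subalgebras are one-dimensional. Over a finite field that remark is false.
\begin{itemize}
\item The bracket is the cross product, and $u\times v$ is orthogonal to $u$ and $v$ for the dot product.
\item So for $w\neq 0$ the plane $U=w^{\perp}$ satisfies $[U,U]\subseteq U^{\perp}=\F_q w$. Hence $U$ is a subalgebra whenever $w\cdot w=0$.
\item By Chevalley--Warning, $x^2+y^2+z^2$ has a nontrivial zero over every $\F_q$. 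For $q=3$, for instance, $[e_1-e_2,\,e_2-e_3]=e_1+e_2+e_3=(e_1-e_2)-(e_2-e_3)$.
\end{itemize}
So this algebra has at least $q^2+q+2$ vertices; for odd $q$ it is in fact isomorphic to $\mathfrak{sl}_2(\F_q)$.

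Your fallback to Theorem \ref{p:gen} does not help. Read literally, its condition (ii) is also satisfied by $\mathfrak{sl}_2(\F_q)$. So it is only meaningful with ``non-split'' meaning ``having no two-dimensional subalgebra''.

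\textbf{The lemma in general.} The same computation works for every three-dimensional Lie algebra. After choosing a basis, the bracket is $[u,v]=M(u\times v)$ for some linear map $M$. Then $w^{\perp}$ is a subalgebra if and only if $w^{T}Mw=0$. Chevalley--Warning gives a nonzero solution of this ternary quadratic equation over every $\F_q$, in every characteristic. Hence no three-dimensional Lie algebra over $\F_q$ has exactly $q^2+q+1$ vertices.

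Consequences for the statement:
\begin{itemize}
\item With the paper's explicit meaning of ``non-split'', the ``if'' direction is false.
\item The statement is true only when ``non-split simple'' means ``simple with no two-dimensional subalgebra''. Then both sides always fail, and the equivalence holds vacuously.
\end{itemize}

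\textbf{Repair.} Under that reading, treat the converse as the tautology it becomes, and replace your case analysis by the quadratic-form argument. This also settles your unresolved worry about $q=4,8,9,\dots$. That worry is a genuine gap in the paper's proof as well, since Theorem \ref{p:gen} needs characteristic $0$ or $p>3$.
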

\begin{proof}
The number of vertices in $\Gamma(L)=q^2+q+1$ if and only if the only subalgebras are one-dimensional. The result therefore follows from Theorem \ref{p:gen}.   
\end{proof}

\section{The comaximal graph and the factor algebra \text{$L/F(L)$}}




Throughout this section we assumne that $F(L)=\phi(L)$. We recall the following standard graph-theoretic notion, which will be used to describe the relationship between $\Gamma_F(L)$ and $\Gamma(L/F(L))$.

\begin{definition}\label{blowup0}
Let $\Gamma = (V, E)$ be a simple undirected graph. A \emph{blow-up} of
$\Gamma$ is a graph $\Gamma' = (V', E')$ together with a surjective map
$\psi : V' \to V$ such that:
\begin{enumerate}
\item[(a)] for each $v \in V$, the fiber $\psi^{-1}(v)$ is a nonempty independent
      set in $\Gamma'$;
\item[(b)] for any $u, v \in V$ with $u \neq v$ and any $x \in \psi^{-1}(u)$,
      $y \in \psi^{-1}(v)$,
      \[x \sim y \text{ in } \Gamma'   \iff  u \sim v \text{ in } \Gamma.\]
\end{enumerate}
The integers $\{|\psi^{-1}(v)|\}_{v \in V}$ are called the \emph{fiber sizes}
of the blow-up. If all fiber sizes equal a common value $m \geq 1$, the
blow-up is called \emph{uniform} with \emph{multiplicity} $m$. When $m = 1$,
condition (2) reduces to $\psi$ being a graph isomorphism; thus, every graph is a uniform blow-up of itself with multiplicity $1$.
\end{definition}

\begin{theorem}\label{blowup}
Let $L$ be a finite-dimensional Lie algebra with $F(L) \neq L$, and let
$\pi : L \to L/F(L)$ be the canonical projection. For every proper subalgebra
$A$ of $L$ with $A \not\subseteq F(L)$, the image $\pi(A)$ is a proper
nonzero subalgebra of $L/F(L)$, and for all such $A, B$,
\[A \sim B \text{ in } \Gamma(L)\iff \pi(A) \sim \pi(B) \text{ in } \Gamma(L/F(L)).\]
In particular:
\begin{enumerate}
\item If $F(L) = 0$, then $\pi$ induces a graph isomorphism $\Gamma(L) \cong \Gamma(L/F(L))$.

\item If $F(L) \neq 0$, then $\pi$ is not injective on subalgebras, and
      $\Gamma_F(L)$ is a non-trivial blow-up of $\Gamma(L/F(L))$ in the
      sense of Definition \ref{blowup0}, with surjection
      $\psi : V(\Gamma_F(L)) \to V(\Gamma(L/F(L)))$ given by $\psi(A) =
      \pi(A)$, and with each fiber $\psi^{-1}(\pi(A))$ being an independent
      set of size at least $2$. In this case $\Gamma_F(L)$ is a supergraph
      of $\Gamma(L)^*$, with equality if and only if $\Gamma(L/F(L))$ has
      no isolated vertices.
\end{enumerate}
\end{theorem}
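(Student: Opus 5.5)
The whole argument rests on one fact. Since we assume $F(L)=\phi(L)$, the Frattini subalgebra is an ideal contained in every maximal subalgebra. Hence for any subalgebra $U$ of $L$ we have
\[U+F(L)=L \iff U=L.\]
Indeed, if $U$ is proper, it lies in some maximal subalgebra $M$, and then $U+F(L)\subseteq M\neq L$.

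\textbf{First claim and the adjacency equivalence.} The first claim is exactly Lemma \ref{5.1}: for $A$ proper with $A\not\subseteq F(L)$, the image $\pi(A)$ is a proper nonzero subalgebra. For adjacency, I would use that $\pi$ is a homomorphism, so $\pi(\langle A,B\rangle)=\langle \pi(A),\pi(B)\rangle$. Therefore $\langle\pi(A),\pi(B)\rangle=L/F(L)$ iff $\langle A,B\rangle+F(L)=L$, iff $\langle A,B\rangle=L$ by the fact above. One small point needs care: two distinct vertices $A\neq B$ may have $\pi(A)=\pi(B)$, so the equivalence is meant for pairs with distinct images. I will show below that such pairs are never adjacent.

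\textbf{Part (1).} If $F(L)=0$, then $\pi$ is an isomorphism of Lie algebras. It therefore induces a bijection on proper nonzero subalgebras that preserves generation, and so $\Gamma(L)\cong\Gamma(L/F(L))$.

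\textbf{Part (2), the blow-up axioms.}
\begin{itemize}
\item[(i)] Surjectivity of $\psi$ is the second half of Lemma \ref{5.1}. It is even more direct: for $U/F(L)$ proper and nonzero, $U$ itself is a vertex of $\Gamma_F(L)$ with $\pi(U)=U/F(L)$.
\item[(ii)] For independence of the fibres, suppose $\pi(A)=\pi(B)$ with $A\neq B$. Then $\langle A,B\rangle\subseteq A+F(L)$, and $A+F(L)$ is proper by the fact above, so $A\not\sim B$.
\item[(iii)] Condition (b) of Definition \ref{blowup0} is the adjacency equivalence already proved.
\end{itemize}

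\textbf{Part (2), non-injectivity and fibre sizes.} Take a fibre over $U/F(L)$; it contains $U\supseteq F(L)\neq 0$. If $\dim U/F(L)=1$, pick $v\in U\setminus F(L)$. Then $\F v$ is a subalgebra with $\pi(\F v)=\pi(U)$ and $\F v\neq U$, because $U$ strictly contains the nonzero ideal $F(L)$ as well as $v$. So this fibre has size at least $2$, and this already shows $\pi$ is not injective on subalgebras.

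For higher-dimensional $\pi(U)$, I would try to produce a second vertex in the fibre as follows. Take a complement $V$ of $F(L)$ in $U$ and set $A=\langle V\rangle$, hoping that $A\neq U$. Failing that, use a different complement $V'=\{v+\theta(v)\}$ with $\theta:V\to F(L)$ linear, and show that not every $\langle V'\rangle$ can equal $U$.

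This is the step I expect to be the main obstacle. The assertion of fibre size $\ge 2$ may genuinely require an extra argument, or it may fail for some $U$. If I cannot prove it, I would state the blow-up with fibres of size $\ge1$, and note that fibres over lines of $\Gamma(L/F(L))$ have size $\ge2$.

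\textbf{The final sentence of (2).} By the equivalence, a vertex $A$ of $\Gamma_F(L)$ is isolated iff $\pi(A)$ is isolated in $\Gamma(L/F(L))$. The vertices inside $F(L)$ are isolated by \cite[Lemma 3.2]{TZG}. Hence $\Gamma(L)^*$ is $\Gamma_F(L)$ minus the preimages of isolated vertices of the quotient graph. So $\Gamma(L)^*=\Gamma_F(L)$ exactly when $\Gamma(L/F(L))$ has no isolated vertices.
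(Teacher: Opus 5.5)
\paragraph{Assessment.} Apart from the fibre-size assertion, your argument follows the paper's proof. The adjacency equivalence comes from $\pi(\langle A,B\rangle)=\langle \pi(A),\pi(B)\rangle$ together with the non-generator property ($U+F(L)=L\Rightarrow U=L$). Independence of fibres, condition (b) via that equivalence, and the comparison of isolated vertices also match. In a few places you are more careful than the paper:
\begin{itemize}
\item you prove independence directly from $\langle A,B\rangle\subseteq A+F(L)$;
\item you obtain surjectivity by taking the full preimage $U$;
\item you actually exhibit two distinct subalgebras in a fibre over a line, where the paper only asserts non-injectivity;
\item you derive the last sentence of (2) from ``$A$ is isolated iff $\pi(A)$ is''.
\end{itemize}
For non-injectivity you also need a line vertex in the quotient. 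This holds: if $F(L)\neq 0$ then $\dim L/F(L)\ge 2$, since $\F x+F(L)=L$ would force $L=\F x$ by your key fact.

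\paragraph{The fibre-size claim is false.} The step you flagged is a defect in the statement, not in your reasoning. Let $F(L)\subseteq U$. The fibre over $\pi(U)$ is $\{U\}$ together with the proper subalgebras $A\subsetneq U$ satisfying $A+F(L)=U$. Such an $A$ exists iff some maximal subalgebra of $U$ fails to contain $F(L)$, i.e.\ iff $F(L)\not\subseteq F(U)$.
\begin{itemize}
\item If $M$ is maximal in $U$ with $F(L)\not\subseteq M$, then $M+F(L)=U$.
\item Conversely, every $A\subsetneq U$ lies in a maximal $M$ of $U$. If $F(L)\subseteq M$, then $A+F(L)\subseteq M\neq U$.
\end{itemize}

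\paragraph{Counterexample.} Take $L=H_3(\F_q)\oplus\F e_4$ from Proposition~\ref{heis+}, so $F(L)=L^2=\F e_3$, and let $U=\F e_1+\F e_2+\F e_3$. Any $A\neq U$ with $A+\F e_3=U$ would be spanned by $e_1+ae_3$ and $e_2+be_3$. Their bracket is $e_3\notin A$, a contradiction. Hence $\psi^{-1}(\pi(U))=\{U\}$. By contrast, the fibre over $\pi(\F e_1+\F e_3+\F e_4)$ has $q^2+1$ elements, so the blow-up is neither uniform nor with all fibres of size $\ge 2$.

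\paragraph{Where the paper goes wrong, and what survives.} The paper justifies the claim by saying that $A$ and $A+F(L)$ are distinct members of a fibre. That only works when $F(L)\not\subseteq A$, and here the only member of the fibre contains $F(L)$. So no perturbation of complements can succeed. Your fallback is the correct statement:
\begin{itemize}
\item $\Gamma_F(L)$ is a blow-up of $\Gamma(L/F(L))$ with nonempty independent fibres;
\item fibres over one-dimensional subalgebras of $L/F(L)$ have size $\ge 2$.
\end{itemize}
This is all that Corollary~\ref{frattini-classification} uses, since there $L/F(L)$ is two-dimensional and every vertex of $\Gamma(L/F(L))$ is a line.
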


\begin{proof}
By Lemma \ref{5.1}, $\pi(A) = (A+F(L))/F(L)$ is a proper
subalgebra of $L/F(L)$; it is nonzero since $A \not\subseteq F(L)$. Since
$\pi$ is a surjective Lie algebra homomorphism,
\[\pi(\langle A, B \rangle) = \langle \pi(A), \pi(B) \rangle,\]
so $\langle \pi(A), \pi(B) \rangle = L/F(L)$ if and only if $\langle A, B
\rangle + F(L) = L$. By Lemma \ref{5.1}, the latter forces
$\langle A, B \rangle = L$, and the converse is immediate. This establishes
the adjacency equivalence.

In particular:

\noindent (1) If $F(L) = 0$ then $\pi$ is the identity and $\pi(A) = A$ for all $A$, so $\pi$ is bijective; together with the adjacency equivalence, it is a graph isomorphism.

\noindent (2) If $F(L) \neq 0$, two distinct subalgebras $A \neq B$ with $\pi(A) = \pi(B)$ show that $\psi = \pi|_{V(\Gamma_F(L))}$ is not injective. We verify the two conditions of Definition \ref{blowup0}:
\begin{enumerate}[(a)]
\item If $\psi(A) = \psi(B)$, then $A \sim B$ in $\Gamma_F(L)$ would require
$\pi(A) \sim \pi(A)$ in $\Gamma(L/F(L))$, which is impossible in a simple graph. Hence, each fiber is an independent set.

\item For $A \in \psi^{-1}(u)$ and $B \in \psi^{-1}(v)$ with $u \neq v$, adjacency $A \sim B$ holds if and only if $u = \pi(A) \sim \pi(B) = v$, by the adjacency equivalence, independently of the choice of representatives within the fibers.
\end{enumerate}
Hence $\Gamma_F(L)$ is a blow-up of $\Gamma(L/F(L))$ via $\psi$. Each fiber
has size at least $2$ since $F(L) \neq 0$ implies that distinct subalgebras
$A$ and $A + F(L)$ (for any $A \not\subseteq F(L)$) map to the same image
under $\pi$. For the final assertion: by \cite[Lemma 3.2]{TZG} every
vertex $A \subseteq F(L)$ is isolated in $\Gamma(L)$, so $\Gamma(L)^*
\subseteq \Gamma_F(L)$. Equality holds if and only if no vertex of
$\Gamma_F(L)$ is isolated, which by the adjacency equivalence occurs if and
only if $\Gamma(L/F(L))$ has no isolated vertices.
\end{proof}

Let $T(\Gamma)$ denote the set of triangles of a graph $\Gamma$. 

\begin{corollary}\label{triangle-blowup}
Under the hypotheses and notation of Theorem \ref{blowup}(2),
\[\mathfrak{t}(\Gamma_F(L)) = \sum_{\{u,v,w\} \in T(\Gamma(L/F(L)))}
|\pi^{-1}(u)|\,|\pi^{-1}(v)|\,|\pi^{-1}(w)|.\]
In particular, if the blow-up is uniform with multiplicity $m$, then 
\[ \mathfrak{t}(\Gamma_F(L)) = m^3 \, \mathfrak{t}(\Gamma(L/F(L))).\]
\end{corollary}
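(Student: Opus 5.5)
The plan is to count triangles of $\Gamma_F(L)$ by pushing each one forward along $\psi=\pi|_{V(\Gamma_F(L))}$ and counting preimages. Theorem \ref{blowup}(2) gives that $\Gamma_F(L)$ is a blow-up of $\Gamma(L/F(L))$ via $\psi$, with every fiber an independent set. Write $\pi^{-1}(u)$ for the fiber $\psi^{-1}(u)$, that is, the set of subalgebras $A\not\subseteq F(L)$ with $\pi(A)=u$.

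Let $\{A,B,C\}$ be a triangle in $\Gamma_F(L)$. Then $A,B,C$ lie in three distinct fibers. Indeed, if two of them, say $A$ and $B$, had the same image, they would be adjacent vertices in one fiber, contradicting condition (a) of Definition \ref{blowup0}. So $u=\pi(A)$, $v=\pi(B)$, $w=\pi(C)$ are three distinct vertices of $\Gamma(L/F(L))$. By condition (b), or directly by the adjacency equivalence in Theorem \ref{blowup}, the pairwise adjacencies of $A,B,C$ give $u\sim v$, $v\sim w$ and $w\sim u$. Hence $\{u,v,w\}\in T(\Gamma(L/F(L)))$, and we obtain a map $\Phi:T(\Gamma_F(L))\to T(\Gamma(L/F(L)))$, $\{A,B,C\}\mapsto\{\pi(A),\pi(B),\pi(C)\}$.

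Next compute the fiber of $\Phi$ over a fixed triangle $\{u,v,w\}$. Any choice $A\in\pi^{-1}(u)$, $B\in\pi^{-1}(v)$, $C\in\pi^{-1}(w)$ gives three distinct vertices, since they lie in disjoint fibers. By condition (b) these vertices are pairwise adjacent, because $u,v,w$ are pairwise adjacent and pairwise distinct. Conversely, every triangle in $\Phi^{-1}(\{u,v,w\})$ has exactly one vertex in each of the three fibers, and the unordered triangle determines this assignment uniquely because the fibers are disjoint. So $\Phi^{-1}(\{u,v,w\})$ is in bijection with $\pi^{-1}(u)\times\pi^{-1}(v)\times\pi^{-1}(w)$ and has $|\pi^{-1}(u)|\,|\pi^{-1}(v)|\,|\pi^{-1}(w)|$ elements.

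Summing over $T(\Gamma(L/F(L)))$ gives the stated formula. In the uniform case each factor equals $m$, so each summand is $m^3$, which yields $m^3\,\mathfrak{t}(\Gamma(L/F(L)))$.

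There is no real obstacle. The only point needing care is the bookkeeping between unordered triangles and ordered triples. This works because the three fibers are disjoint and labelled by the distinct vertices $u,v,w$, so no division by $3!$ is needed and no triangle is counted twice.
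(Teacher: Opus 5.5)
Your proof is correct and follows the same route as the paper: independence of the fibers forces every triangle of $\Gamma_F(L)$ to meet three distinct fibers lying over a triangle of $\Gamma(L/F(L))$, and condition (b) makes every choice of one vertex from each of those three fibers a triangle. You simply make the paper's one-line correspondence explicit through the map $\Phi$ and a count of its fibers, and your argument needs only independence of the fibers and condition (b), not the size-at-least-$2$ claim of Theorem \ref{blowup}(2), so it holds for arbitrary fiber sizes.
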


\begin{proof}
Since each fiber of $\pi$ is an independent set in $\Gamma_F(L)$, no triangle
of $\Gamma_F(L)$ contains two vertices from the same fiber. Hence every
triangle of $\Gamma_F(L)$ corresponds uniquely to a triangle $\{u,v,w\}$ of
$\Gamma(L/F(L))$ together with an independent choice of one vertex from each
of $\pi^{-1}(u)$, $\pi^{-1}(v)$, $\pi^{-1}(w)$; conversely, every such
choice yields a triangle of $\Gamma_F(L)$ by Theorem \ref{blowup}.
Counting these choices gives the stated formula.
\end{proof}

We now determine $F(L)$ in all eight cases of Section 3, establishing which
of the two parts of Theorem \ref{blowup} applies in each.

\begin{lemma}\label{frattini-zero}
In each of the cases of Propositions \ref{Abelian}, \ref{B2}, \ref{irreducible}, \ref{2EigenVal}, \ref{CaseC4}, and \ref{CaseD}, we have $F(L) = 0$.
\end{lemma}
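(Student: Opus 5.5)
To show $F(L)=0$ in each case, I will exhibit a small family of maximal subalgebras of $L$ whose intersection is already $0$. Since $F(L)$ is by definition the intersection of all maximal subalgebras, this suffices. Equivalently, by \cite[Lemma 3.2]{TZG}, one can check that no line of $L$ is an isolated vertex of $\Gamma(L)$, since any nonzero $F(L)$ would contain an isolated line. I will use whichever is more convenient, relying on the explicit descriptions of planes and adjacencies recalled in Section 3.

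\textbf{The solvable cases.} For the abelian case (Proposition \ref{Abelian}) and Case C4 (Proposition \ref{CaseC4}), every plane is a subalgebra, and since $L$ is three-dimensional each plane is maximal. Three coordinate planes then intersect in $0$.

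For Case B2 (Proposition \ref{B2}), I use the listed planes:
\[ V=\langle y,z\rangle,\qquad Q_0=\langle y,x\rangle,\qquad P_0=\langle x,z\rangle. \]
Each is maximal, because $L$ has dimension $3$ and none is contained in a larger proper subalgebra. Their intersection is $0$.

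Case 2 of $\dim L'=2$ (Proposition \ref{2EigenVal}) is identical, using $V$, $Fx+Fv_1$ and $Fx+Fv_2$.

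For the irreducible case (Proposition \ref{irreducible}), $V$ is the only plane. Here I instead argue that each line $Fu$ with $u\notin V$ is a maximal subalgebra. Indeed, any subalgebra properly containing $Fu$ is two-dimensional or all of $L$, and the only plane is $V\not\ni u$. Taking two distinct such lines, or one such line together with $V$, gives intersection $0$. Alternatively, Proposition \ref{dim3-derived2}(i) shows directly that no line is isolated.

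\textbf{The perfect case.} For $\mathfrak{sl}_2(\F_q)$ (Proposition \ref{CaseD}), Borel subalgebras are maximal. The two Borels $B=\F x+\F h$ and $B(0)=\F h+\F y$ intersect in $\F h$. A non-split line, which exists by \cite[Proposition 4.10]{TZG} since $q(q-1)/2>0$, lies in no Borel and is therefore a maximal subalgebra not containing $h$. Hence the intersection is $0$. The same conclusion also follows from Lemma \ref{bijection}: every line has a neighbour, so none is isolated.

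\textbf{Main obstacle.} The only delicate point is justifying maximality in the cases where maximal subalgebras are one-dimensional. This is the irreducible case, and the argument for it is the one given above. Everything else is direct reading of the earlier structural propositions.
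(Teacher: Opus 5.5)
Your proof is correct and follows essentially the same route as the paper: you exhibit enough maximal subalgebras whose intersection is already $0$. These are the planes in the abelian and almost-abelian cases, three explicit planes in Case B2 and the two-eigenvalue case, and one-dimensional maximal subalgebras outside $V$ in the irreducible case. The only cosmetic difference is in $\mathfrak{sl}_2(\F_q)$, where the paper intersects two non-split lines directly while you combine $B\cap B(0)=\F h$ with one non-split line; your explicit triple $V, Q_0, P_0$ in Case B2 is also clearer than the paper's brief remark there.
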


\begin{proof}
In each case, $F(L) = 0$ follows by identifying the maximal subalgebras and
observing that their intersection is trivial: in Propositions \ref{Abelian} and \ref{CaseC4}, the $q^2+q+1$ planes are all maximal, and no line is contained in all of them; in Propositions \ref{irreducible} and \ref{CaseD}, the lines outside $L'$ (respectively, the non-split lines) are themselves maximal subalgebras, and
any two of them intersect trivially; in Proposition \ref{2EigenVal}, the
intersection of all planes through $E_1$ is $E_1$ and of all planes through
$E_2$ is $E_2$, so $F(L) \subseteq E_1 \cap E_2 = 0$; and in Proposition \ref{B2}, the planes satisfy $\langle y \rangle \cap \langle z \rangle = 0$ as shown in the proof of Proposition \ref{B2}.
\end{proof}

\begin{corollary}\label{frattini-classification}
Among the eight cases of Propositions \ref{Abelian} to \ref{CaseD},
the Frattini subalgebra $F(L)$ is nonzero precisely for the Heisenberg algebra
(Proposition \ref{Heisenberg}) and the algebra of Proposition \ref{1EigenVal}, where in both cases $\dim F(L) = 1$; in the remaining six cases $F(L) = 0$ by Lemma \ref{frattini-zero}. 
Correspondingly, $\Gamma(L)$
has isolated vertices exactly in these two cases, and precisely those
contained in $F(L)$: in the six cases with $F(L) = 0$, direct inspection of
Propositions \ref{Abelian}, \ref{B2}, \ref{irreducible}, \ref{2EigenVal}, \ref{CaseC4}, and \ref{CaseD} shows that
$\Gamma(L)$ has no isolated vertices at all, so $\Gamma(L)^* = \Gamma(L)$.

In the two cases with $F(L) \neq 0$, the canonical projection $\pi : L \to
L/F(L)$ exhibits $\Gamma_F(L) = \Gamma(L)^*$ as a uniform blow-up of
$\Gamma(L/F(L)) \cong K_{q+1}$ with multiplicity $m = q+1$.
Corollary \ref{triangle-blowup} then yields
\[\mathfrak{t}(\Gamma_F(L)) =  (q+1)^3\binom{q+1}{3} = \frac{q(q-1)(q+1)^4}{6},\]
recovering Propositions \ref{Heisenberg} and \ref{1EigenVal}  by a
single uniform computation, without the case-by-case analysis of Section 3.
\end{corollary}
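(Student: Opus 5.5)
The plan is to treat the three assertions of the corollary in turn: the Frattini computation, the identification of isolated vertices, and the blow-up with its triangle count. In all eight cases $L$ is solvable except $\mathfrak{sl}_2(\F_q)$, and there $F(L)=0$ by Lemma~\ref{frattini-zero}. So the standing assumption $F(L)=\phi(L)$ of the section holds wherever it is needed. Lemma~\ref{frattini-zero} already gives $F(L)=0$ in the six listed cases, so it remains to compute $F(L)$ in the Heisenberg case (Proposition~\ref{Heisenberg}) and in Case~3 (Proposition~\ref{1EigenVal}).

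\textbf{Frattini subalgebra in the two remaining cases.}
\begin{itemize}
\item \emph{Heisenberg case.} Every line lies in a plane: for a non-central line $\F w$, the subspace $\F w+\F h$ is an abelian subalgebra. Hence the maximal subalgebras are exactly the $q+1$ planes, and each contains $L'=\F h$. Two distinct such planes meet exactly in $\F h$, so $F(L)=\F h$.
\item \emph{Case~3.} Here $[x,v_1]=\lambda v_1$ and $[v_1,v_2]=0$. For every $w\in L$ the subspace $\F v_1+\F w$ is therefore a subalgebra, so again every line lies in a plane and the maximal subalgebras are the $1+q$ planes $V$ and $\F v_1+\F(x+\alpha v_2)$. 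All of them contain $v_1$, and $V\cap(\F v_1+\F x)=\F v_1$, so $F(L)=\F v_1$.
\end{itemize}
In both cases $\dim F(L)=1$ and $L/F(L)$ is two-dimensional. By Section~3.2, $\Gamma(L/F(L))\cong K_{q+1}$.

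\textbf{Isolated vertices.} In the two nonzero cases, \cite[Lemma 3.2]{TZG} shows that the subalgebras inside $F(L)$, namely the single line $F(L)$, are isolated. Since $K_{q+1}$ has no isolated vertices, the adjacency equivalence of Theorem~\ref{blowup} shows that no other vertex is isolated. For the six cases with $F(L)=0$, I will inspect the adjacency descriptions and, for each vertex type, exhibit a neighbour.
\begin{itemize}
\item Every plane is adjacent to some line not contained in it.
\item Every line is adjacent to some plane not containing it; such a plane exists because no line lies in all planes, which is exactly $F(L)=0$.
\item In $\mathfrak{sl}_2(\F_q)$, Borels and lines have neighbours by Lemma~\ref{bijection}, and non-split lines are adjacent to everything.
\item In Case~C1, the only plane $V$ is adjacent to the lines outside it.
\end{itemize}

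\textbf{Blow-up and triangle count.} I will show that each fiber of $\psi=\pi$ has size exactly $q+1$. The vertices of $\Gamma(L/F(L))$ are the lines $U/F(L)$, where $U$ is a plane containing $F(L)$; by the computation above these $U$ are precisely the subalgebras planes. The preimages of $U/F(L)$ are the subalgebras $A\not\subseteq F(L)$ with $A+F(L)=U$. These are $U$ itself together with the $q$ lines of $U$ other than $F(L)$. All of them are subalgebras and vertices, since lines are always subalgebras. This gives fiber size $q+1$, and since every vertex of $\Gamma_F(L)$ is a line or a plane outside $F(L)$, it is counted exactly once.

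Then Theorem~\ref{blowup}(2) gives $\Gamma_F(L)=\Gamma(L)^*$, using again that $K_{q+1}$ has no isolated vertices. Corollary~\ref{triangle-blowup} with $m=q+1$ yields $(q+1)^3\binom{q+1}{3}=\frac{q(q-1)(q+1)^4}{6}$. Finally, the single isolated vertex lies in no triangle, so this equals $\mathfrak{t}(\Gamma(L))$, matching Propositions~\ref{Heisenberg} and~\ref{1EigenVal}.

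\textbf{Main obstacle.} I expect the most care to be needed in the case-by-case verification that the six $F(L)=0$ graphs have no isolated vertices. The fiber count is routine by comparison, but it depends on the fact that every plane of $L$ contains $F(L)$, so that is the point to check carefully there.
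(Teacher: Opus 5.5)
Your computation of $F(L)$ in the Heisenberg and Case~3 algebras, your fibre count ($U$ together with the $q$ lines of $U$ other than $F(L)$), and your use of Corollary~\ref{triangle-blowup} are correct. They follow the paper's route: Lemma~\ref{frattini-zero}, direct inspection, and the explicit fibres of Example~\ref{blowup-recovers}. Your remark that the isolated line lies in no triangle, so that $\mathfrak{t}(\Gamma_F(L))=\mathfrak{t}(\Gamma(L))$, is a step the paper leaves implicit.

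The one step that fails is in the isolated-vertex inspection for the six cases with $F(L)=0$. Your second bullet claims every line is adjacent to some plane not containing it, justified by ``no line lies in all planes, which is exactly $F(L)=0$''. That equivalence holds only when the maximal subalgebras are exactly the planes. In Case~C1 (Proposition~\ref{irreducible}) the $q^2$ lines outside $V$ lie in no plane and are therefore maximal, which is why $F(L)=0$ there. Yet the $q+1$ lines inside $V$ lie in the unique plane $V$ and are adjacent to no plane at all. Your separate C1 bullet only supplies a neighbour for $V$, so these $q+1$ lines are not covered by anything you wrote.

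The repair is immediate. Take a line $\F v\subseteq V$ and a line $\F w\not\subseteq V$. The subalgebra they generate has dimension at least $2$ and contains $w\notin V$. Since $V$ is the only plane, that subalgebra must be $L$, so $\F v\sim\F w$; this is the join structure of Proposition~\ref{dim3-derived2}(i).

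The same equivalence is also false in $\mathfrak{sl}_2(\F_q)$, because non-split lines are maximal. There, however, your dedicated bullet does cover every vertex. Each nilpotent or split line lies in at most two of the $q+1\ge 4$ Borels, and non-split lines are adjacent to everything.
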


\begin{example}\label{blowup-recovers}
We illustrate the blow-up structure for the Heisenberg algebra
(Proposition \ref{Heisenberg}). Here $F(L) = Z(L) = \langle h \rangle$
has dimension $1$, and $L/F(L)$ is the two-dimensional abelian algebra, so
$\Gamma(L/F(L)) \cong K_{q+1}$. The fibers of $\pi$ are computed explicitly:
the fiber over the image of each plane $P \in \mathcal P$ consists of $P$
itself together with the $q$ non-central lines contained in $P$, giving
$|\pi^{-1}(v)| = q+1$ for each $v \in V(\Gamma(L/F(L)))$. Since
$\Gamma(L/F(L)) \cong K_{q+1}$ has no isolated vertices, $\Gamma(L)^* =
\Gamma_F(L)$. The same analysis applies to Proposition \ref{1EigenVal}:
the fiber over $V/E$ is $\{V\} \cup \{A_1,\ldots,A_q\}$, and the fiber over
each $Q_{b_0}/E$ is $\{Q_{b_0}\} \cup \{X_{a,b_0} : a \in \mathbb F_q\}$;
again every fiber has size $q+1$, confirming the uniform blow-up structure
with multiplicity $m = q+1$ in both cases.
\end{example}

{\bf Remark} We finish by answering a question raised in relation to \cite[Proposition 3.5]{TZG}. It asks whether the diameter of the induced graph $\Gamma_F(L)$ is at most two when $L$ is a Lie algebra over an algebraically closed field $\F$. We show that this is not the case by means of a final example.

\begin{example}
Let $L= Fz\oplus S$ where $Fz$ is the centre of $L$ and $S\cong sl_2(\F)$. Then $F(L)=0$, and we claim that there is no path of length two from $\F z$ to $\F s$ where $s\in S$. For, suppose there is a subalgebra $C$ such that $\F z\sim C\sim \F s$. Then $L=\langle\F z,C\rangle=\F z+C$, so  $S=L^2=C^2\subseteq C$, whence $C=S$. But $\langle S,\F s\rangle=S\not = L$.
\end{example}

\bibliographystyle{plane}

\end{document}